\newtheorem{Theo}{Theorem}[section]
\newtheorem{Prop}[Theo]{Proposition}
\newtheorem{Coro}[Theo]{Corollary}
\newtheorem{Lemm}[Theo]{Lemma}
\newtheorem{Rema}[Theo]{Remark}
\newcommand{\C}{\mathbb{C}}
\DeclareMathOperator{\re}{Re}
\newcommand{\supp}{\operatorname{supp}}
\def\N{\mathbb{ N}}
\def\R{\mathbb{ R}}
\begin{document}

\title[Functions of finite order generated by Dirichlet series]{Holomorphic functions of finite order generated by Dirichlet series}

\address[]{Andreas Defant\newline  Institut f\"{u}r Mathematik,\newline Carl von Ossietzky Universit\"at,\newline
26111 Oldenburg, Germany.
}
\email{defant@mathematik.uni-oldenburg.de}

\address[]{Ingo Schoolmann\newline  Institut f\"{u}r Mathematik,\newline Carl von Ossietzky Universit\"at,\newline
26111 Oldenburg, Germany.
}
\email{ingo.schoolmann@uni-oldenburg.de}

\author[Defant and Schoolmann]{Andreas Defant and Ingo Schoolmann}

\begin{abstract}
Given a frequency $\lambda = (\lambda_n)$  and $\ell \ge 0$, we introduce  the scale of Banach spaces $H_{\infty,\ell}^{\lambda}[\re > 0]$ of holomorphic functions $f$ on the open right half-plane
$[\re > 0]$, which satisfy $(A)$ the growth condition $|f(s)| = O((1 + |s|)^\ell)$, and $(B)$  have a Riesz germ, i.e. on some open subset and for some $m \ge 0$ the function $f$ coincides
with the pointwise  limit (as $x \to \infty$)  of the so-called
$(\lambda,m)$-Riesz means
$\sum_{\lambda_n < x} a_n e^{-\lambda_n s}\big( 1-\frac{\lambda_n}{x}\big)^m ,\,x >0$
 of some $\lambda$-Dirichlet series $\sum a_n e^{-\lambda_n s}$. Reformulated in our terminology, an  important result of M. Riesz  shows that in this case the function $f$
for every $k >\ell$ is the pointwise limit of the $(\lambda,k)$-Riesz means of $D$ on $[\re > 0]$.

Our main contribution is an extension --  showing that 'after translation' every bounded set in $H_{\infty,\ell}^{\lambda}[\re > 0]$ is uniformly approximable by all its $(\lambda,k)$-Riesz means of order $k>\ell$.
This follows from an appropriate maximal theorem, which in fact turns out to be at the very heart of a seemingly interesting structure theory of the  Banach spaces $H_{\infty,\ell}^{\lambda}[\re > 0]$. One of the many consequences is that $H_{\infty,\ell}^{\lambda}[\re > 0]$  basically  consists of those holomorphic functions
on $[\re >0]$, which  have a Riesz germ and are of finite uniform order $\ell$ on $[\re >0]$.

To establish all this and more, we need to  reorganize
(and to improve) various aspects and keystones of the classical theory of Riesz summability  of general Dirichlet series as invented by Hardy and M. Riesz.
\end{abstract}
\maketitle


\tableofcontents

\noindent
\renewcommand{\thefootnote}{\fnsymbol{footnote}}
\footnotetext{2010 \emph{Mathematics Subject Classification}: Primary 43A17, Secondary  30B50, 43A50} \footnotetext{\emph{Key words and phrases}: general Dirichlet series, finite order, Riesz summability, almost everywhere convergence, Hardy spaces.
} \footnotetext{}

\section{\bf Introduction}

Recently, the theory of ordinary Dirichlet series $\sum a_n n^{-s} = \sum a_n e^{-s \log n}$ and that of general Dirichlet series
$\sum a_n e^{-\lambda_n s}$, where the frequency $\lambda = (\lambda_n)$ is a strictly increasing, non-negative real sequence,
saw a remarkable renaissance.

\smallskip

General  Dirichlet series $\sum a_n e^{-\lambda_n s}$ converge on maximal half-planes $[\re >~\sigma]$  in $\C$, where they define holomorphic functions $f$, and even though in concrete situations these half-planes might be small, the holomorphic functions $f$ often extend to  holomorphic functions on larger half-planes.

\smallskip

The problem of determining whether a holomorphic $f$, defined on some half-plane, is or is not representable in the form of a
$\lambda$-Dirichlet series  $\sum a_n e^{-\lambda_n s}$, is in general difficult.

\smallskip

An important criterion is that the limit function of a $\lambda$-Dirichlet series
has finite order in each closed half-plane of its domain of convergence. Moreover, if this limit function can be holomorphically extended
to a larger half-plane, it may happen that its continuation still has finite order in a larger domain.

\smallskip

Recall that,  given a  holomorphic function $f: [\re >\sigma_0] \to \C,\, \sigma_0 \in \R$, its  order $\mu_f(\sigma)\in [\sigma_0,\infty]$ on the line
$[\re~=~\sigma],\, \sigma > \sigma_0$ is the infimum over all $\ell \in [0,\infty] $
such that
\begin{equation}\label{finiteorderdefintro}
\exists~ C, t_{0}>0~\forall~|t|>t_{0}\colon~ |f(\sigma+it)|\le C|t|^{\ell}\,.
\end{equation}
The function $f$ is said to be of finite order on  $[\re >\sigma_0]$ whenever $\mu_f < \infty$ for all  $\sigma >\sigma_0$, and in this case  $\mu_f:]\sigma_0,\infty[ \to \mathbb{R}_{\ge 0}$ is continuous and convex.
\smallskip

Similarly,  the uniform  order  $\nu_f \in [0,\infty]$ of $f$ on the half-plane $[\re >\sigma_0]$  is given by the infimum over all $\ell \in [0,\infty] $ such that
the above  condition holds uniformly, i.e.,
\begin{equation}\label{finiteorderdefintroU}
\exists~ C, t_{0}>0~\forall ~\sigma>\sigma_0, |t|>t_{0}\colon~ |f(\sigma+it)|\le C|t|^{\ell}.
\end{equation}

Moreover, the function $f$ is said to have finite uniform order on $[\re >\sigma_0]$ whenever $\nu_f < \infty$.

\smallskip

To see a famous example, denote by $\zeta:  \C \setminus \{1\} \to \C$ the zeta-function, which
is holomorphic with a simple pole in $s=1$, and which
on $[\re > 1]$ is the pointwise limit of the zeta-Dirichlet series $\sum n^{-s}$. The famous Lindel\"of conjecture asks whether the order of $\zeta$
on the critical line $[\re = \frac{1}{2}]$ equals  $0$. Equivalently, consider the entire function
\[
\eta: \C  \to \C\,, \,\,\, \eta(s) = (1 - 2^{1-s}) \zeta(s)\,,
\]
which on $[\re >0]$ is nothing else then the  pointwise limit of the $\eta$-Dirichlet series $\sum (-1)^{n+1} n^{-s}$.
It is known that
$\mu_\eta(\sigma) = \frac{1}{2}-\sigma$ for $\sigma < 0$ and $\mu_\eta(\sigma) = 0$ for $\sigma >1$. Hence
Lindel\"of's conjecture is equivalent to the question whether or not we have that
\[
\mu_\eta(\sigma)=
\begin{cases}
\frac{1}{2}-\sigma & \,\,\,\, 0 \leq \sigma < \frac{1}{2}
\\
0 & \,\,\,\, \frac{1}{2} < \sigma \leq 1.
\end{cases}
\]

The main aim here is to study holomorphic functions of finite uniform order generated by Dirichlet series from the point of view of functional analysis -- and our main inspiration comes from the classical monograph \cite{HardyRiesz} of Hardy and Riesz on so-called Riesz summation of
general Dirichlet series.

\smallskip

Given a frequency $\lambda$ and a holomorphic function $f: [\re > 0] \to \mathbb{C}$, we say that  the
$\lambda$-Dirichlet series $D= \sum a_n(D) e^{-\lambda_n s}$
is a $\lambda$-Riesz germ of $f$ whenever $D$ for some $m \ge 0$ and some $\sigma \in \mathbb{R}$  satisfies that
\[
f(s) = \lim_{x \to \infty} \sum_{x < \lambda_n} a_n(D) e^{-\lambda_ns} \Big(  1 - \frac{\lambda_n}{x}\Big)^m
\,\,\,\, \text{for all $s \in [\re > \sigma]$}.
\]
The trigonometric polynomials
\[
R_x^{\lambda,m}(f)(s)=\sum_{\lambda_n <x} a_n(D) e^{-\lambda_ns} \Big(  1 - \frac{\lambda_n}{x}\Big)^m\,,\,\, x >0
\]
are said to be the  $(\lambda,m)$-Riesz means of $f$ of order $m$.

\smallskip

An important fact will be that such $\lambda$-Riesz germs $D$ of $f$, whenever they exist~(!), are unique (Corollary~\ref{normed}) --
and so in this situation
the coefficients $a_n(f) := a_n(D)$ uniquely determine the function $f$.
\smallskip

We study a graduated  scale of  Banach spaces that consist of holomorphic functions on the right half-plane, which are of finite uniform order on $[\re >0]$ and are generated by $\lambda$-Riesz germs. More precisely, given a frequency  $\lambda=(\lambda_{n})$ and $\ell \ge 0$, the linear space
\begin{equation}\label{defin}
 H_{\infty,\ell}^\lambda[\re>0]
\end{equation}
 consists of all
holomorphic functions $f:~[\re>0] \to \mathbb{C}$, which are generated by a $\lambda$-Riesz germ and satisfy the growth condition
\begin{equation} \label{normintro}
  \|f\|_{\infty,\ell} = \sup_{\re s > 0}
\frac{|f(s)|}{(1+|s|)^\ell}< \infty\,.
\end{equation}
This defines a scale  of Banach spaces (a non-trivial fact from Theorem~\ref{completeness}),
$$\big( H_{\infty,\ell}^\lambda[\re>0], \|\pmb{\cdot}\|_{\infty,\ell}\big)_{\ell \ge0}\,,$$
 which is intimately connected with the notion of finite uniform order (Theorem~\ref{Theoremfiniteorder}):
 \begin{itemize}
   \item[$\bullet$]
   Each 
$f\in H_{\infty,\ell}^{\lambda}[\re >0]$ has finite uniform order $\nu_f \leq \ell $ on $[\re>0]$,
and conversely,
   \item[$\bullet$]
   if $f$ is a holomorphic function on $[\re>0]$, which is generated by a  $\lambda$-Riesz germ and is of finite uniform order on $[\re~>~0]$,  then for every
$k>\nu_f$ all translations $f_{\sigma}=f(\sigma+ \cdot)\in H_{\infty,k}^{\lambda}[\re > 0]\,, \, \sigma >0$.
 \end{itemize}

\bigskip

  \noindent{\bf More background.}
Mainly inspired by the classical  monograph \cite{HardyRiesz} of Hardy and  M. Riesz from 1915, the recent works \cite{Ba20}, \cite{CaDeMaSc_VV}, \cite{defantschoolmann2019Hptheory}, \cite{defant2020riesz}, \cite{defant2020variants}, \cite{HelsonBook}, and \cite{schoolmann2018bohr}  suggest a modern study of general Dirichlet series
$D=\sum a_{n}e^{-\lambda_{n}s}\,.$
Whereas the dominant  tool of the early days of this theory was complex analysis,
the idea now is to implement  modern techniques like  functional analysis, Fourier analysis, or abstract harmonic analysis
on compact abelian groups.

\smallskip
An important object of study are Hardy spaces $\mathcal{H}_p(\lambda), \, 1 \leq p \leq \infty$, which may be defined as follows:

Let $G$ be a compact  abelian group, and $\beta: (\mathbb{R},+) \to~G$ a continuous homomorphism with dense range. Then
$(G,\beta)$ is said to be $\lambda$-Dirichlet group, provided, for all $n \in \mathbb{N}$, there exists $h_{\lambda_n} \in \widehat{G}$
such that $h_{\lambda_n}~\circ~\beta =~e^{-i \lambda_n}$. The  Banach space $\mathcal{H}_p(\lambda)$ then consists
of all Dirichlet series $D=\sum a_{n}e^{-\lambda_{n}s}$ for which there is $f \in L_p(G)$  such that
$\supp{\widehat{f}} \subset  \{ h_{\lambda_n}  \colon n \in \mathbb{N}   \}$
and $\widehat{f}(h_{\lambda_n})=a_n$
 for all $n \in \mathbb{N} $, endowed with the norm $\|D\|_p = \|f\|_p$.
 It is important to note that for every frequency $\lambda$ there always exist a $\lambda$-Dirichlet group, and that  $\mathcal{H}_p(\lambda)$ does not depend on the choice of this group.
 For ordinary Dirichlet series these spaces have been introduced in \cite{Ba02} and \cite{HLS}, and this has in fact caused
 a fruitful renaissance of the analysis of such series. Within general Dirichlet series  various aspects of $\mathcal{H}_p(\lambda)$ have been studied in \cite{Ba20},
 \cite{CaDeMaSc_VV}, \cite{defantschoolmann2019Hptheory}, \cite{defant2020riesz}, \cite{defant2020variants}, and \cite{schoolmann2018bohr}.

  \smallskip

The particular case $p=\infty$ is of special interest, since then $\mathcal{H}_{\infty}(\lambda)$ may be described in terms of holomorphic functions on the right half-plane,  and in fact we are  only interested trying to develop this case further here.
\smallskip

Recall that $H_{\infty}^{\lambda}[\re > 0]$ (as defined in \cite{defant2020riesz}) denotes the linear  space of all holomorphic and bounded functions  $f\colon [\re > 0]\to \C$, which are  uniformly almost periodic function on all  vertical lines $[\re=\sigma]$
(or equivalently, some  line $[\re=\sigma]$)  with Bohr coefficients
  \begin{equation*} \label{bohrcoeffintro}
a_{x}(f)=\lim_{T\to \infty} \frac{1}{2T} \int_{-T}^{T} f(\sigma +it) e^{-(\sigma+it)x} dt, ~~ x>0.
\end{equation*}
  supported in $\{\lambda_{n} \mid n \in \N\}$. Note that here
   the integrals are independent of  the choice of $\sigma$. Together with the sup norm on the right half-plane, $H_{\infty}^{\lambda}[\re > 0]$ forms a Banach space, and by \cite[Theorem 2.16]{defant2020riesz} there is a coefficient preserving isometric linear bijection
   identifying $\mathcal{H}_{\infty}(\lambda)$ and $H_{\infty}^{\lambda}[\re > 0]$,
   \begin{equation} \label{periodic-coin}
     \mathcal{H}_{\infty}(\lambda) \,= \,H_{\infty}^{\lambda}[\re > 0]\,.
   \end{equation}
    The most fundamental theorem of the theory of almost periodic functions on $\R$ is due to Bohr, and states that a function $f:~\mathbb{R} \to \mathbb{C}$ is unifomly almost periodic if and only if it is
  uniformly approximable by trigonometric polynomials of the form $p(t)= \sum_{k=1}^{n} \alpha_k e^{-i\alpha_k t}$ with $\alpha_k \in \mathbb{R}, 1 \leq k \leq n$.
  We refer to \cite{Be54} for all needed information on (uniformly) almost periodic functions.

  \smallskip

  For bounded, holomorphic functions on half-planes the following result from
    \cite[Corollary 2.19]{defant2020riesz} (see also \cite[Lemma~3.3]{DeFeScSe2020}
  for a may be more elementary proof) should be viewed as an  analogue of Bohr's  approximation theorem:
    If $f \in H^\lambda_\infty[\re > 0]$, then for
    every $k>0$ and $s\in [\re >0]$,
\begin{equation} \label{periodic}
f(s)=\lim_{x\to \infty} \sum_{\lambda_{n}<x} a_{\lambda_n}(f)e^{-\lambda_{n}s} (1-\frac{\lambda_{n}}{x})^{k}\,,
\end{equation}
and the  convergence is even uniform on every half-plane $[\re > \sigma], \, \sigma >0$.

  \smallskip
  In  Corollary \ref{oldcase} we characterize $H_{\infty}^{\lambda}[\re > 0]$ in terms of our new setting showing that
  the canonical embedding from $H_{\infty,0}^{\lambda}[\re > 0]$ into $H_{\infty}^{\lambda}[\re>0]$
  is in fact  isometric (obvious), onto  and coefficient preserving,
  \begin{equation}\label{l=0}
    H_{\infty}^{\lambda}[\re>0] = H_{\infty,0}^{\lambda}[\re > 0]\,.
  \end{equation}
    But then in view of Bohr's  approximation theorem for uniformly almost periodic functions on $\mathbb{R}$, and in view of the approximation theorem from \eqref{periodic} for almost periodic holomorphic functions on $[ \re >0]$, the following  question arises:

  Given $\ell >0$, to which extend
  is it possible to approximate any function $f \in H_{\infty,\ell}^{\lambda}[\re>0]$ by  $(\lambda,k)$-Riesz means
  \[
  R_{x}^{\lambda,k}(D)(s) = \sum_{\lambda_{n}<x} a_n(f)e^{-\lambda_{n}s} (1-\frac{\lambda_{n}}{x})^k\,,\,\,x >0
  \]
  pointwise on  $[\re > 0]$ or even in the norm of $H_{\infty,\ell}^{\lambda}[\re>0]$? More precisely, is there some  $k >0$, and if
  yes for which set of $k$'s,  is it possible to approximate  any $f \in H_{\infty,\ell}^{\lambda}[\re>0]$
  pointwise on  $[\re > 0]$ or  in the norm of $H_{\infty,\ell}^{\lambda}[\re>0]$ by their $(\lambda,k)$-Riesz means?

  \smallskip
  A first answer to this question is given by an  important theorem of M. Riesz, first published in \cite[Theorem~41]{HardyRiesz}, which
  inspired the definition from
\eqref{defin} as well as the whole article.
It  basically shows that in view of \eqref{l=0} the validity of \eqref{periodic} exceeds the class of functions in $H_{\infty}^{\lambda}[\re>0]$ considerably, and in our terminology it reads as follows:

\begin{Theo}\label{Theo41intro}Let $\lambda$ be any frequency and $\ell \ge 0$. Then every $f\in H_{\infty,\ell}^{\lambda}[\re>0]$ is $(\lambda,k)$-Riesz summable on~$[\re>0]$ for every $k>\ell$, i.e.
$$f(s)=\lim_{x\to \infty} R_{x}^{\lambda,k}(f)(s), ~~ s\in [\re>0].$$
\end{Theo}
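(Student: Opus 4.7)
The strategy is the classical one of M.~Riesz: a Perron--Riesz integral representation of the Riesz means, combined with a contour shift whose residue at $w = 0$ reproduces $f(s_0)$ and whose shifted integral is killed by the hypothesis $k > \ell$. Let $f \in H_{\infty,\ell}^{\lambda}[\re > 0]$ with $\lambda$-Riesz germ $D = \sum a_n e^{-\lambda_n s}$; fix $k > \ell$ and $s_0 \in [\re > 0]$.

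Starting from the Laplace inversion $t_+^k = \frac{k!}{2\pi i}\int_{(c)} w^{-k-1} e^{wt}\, dw$ for $c > 0$, applied to $t = x - \lambda_n$ and summed against $a_n e^{-\lambda_n s_0}$, one derives the Perron--Riesz formula
\[
R_x^{\lambda,k}(f)(s_0) \;=\; \frac{k!}{2\pi i\, x^k} \int_{c - i\infty}^{c + i\infty} D(s_0 + w) \, \frac{e^{wx}}{w^{k+1}}\, dw,
\]
valid once $c$ is chosen so large that $D$ converges absolutely at $s_0 + w$ on the line $\re w = c$ (to legitimate the Fubini exchange). Since $f$ is holomorphic on $[\re > 0]$ and agrees with the Riesz-summable limit of $D$, the integrand extends meromorphically across the strip $-\re s_0 < \re w < c$, with a single pole at $w = 0$ of order $k+1$.

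Shifting the contour to $\re w = -\delta$ for any $0 < \delta < \re s_0$, the residue theorem yields
\[
R_x^{\lambda,k}(f)(s_0) \;=\; \sum_{j=0}^k \binom{k}{j} f^{(j)}(s_0)\, x^{-j} \;+\; \frac{k!}{2\pi i\, x^k} \int_{-\delta - i\infty}^{-\delta + i\infty} f(s_0 + w) \, \frac{e^{wx}}{w^{k+1}}\, dw,
\]
the polynomial contribution coming from $\mathrm{Res}_{w=0}\, f(s_0+w) e^{wx}/w^{k+1} = \frac{1}{k!}\sum_j \binom{k}{j} f^{(j)}(s_0) x^{k-j}$, and collapsing to $f(s_0)$ as $x \to \infty$. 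On the new vertical line $|e^{wx}| = e^{-\delta x}$, and combining the defining growth $|f(s_0 + w)| \le \|f\|_{\infty,\ell}(1+|s_0+w|)^\ell$ with $|w|^{-k-1} \lesssim (1+|t|)^{-k-1}$ for $w = -\delta + it$, the integrand is bounded by $C\, e^{-\delta x}(1+|t|)^{\ell - k - 1}$, which is integrable in $t$ precisely because $k > \ell$. Hence the remainder integral is $O(e^{-\delta x}/x^k)$ and vanishes, delivering the claim.

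The main obstacle is technical rather than structural: the Perron--Riesz formula above presupposes that $D$ has a half-plane of absolute convergence, which in general may fail due to the classical Bohr gap between the abscissae of convergence and of absolute convergence. This is exactly where the paper's reorganised Riesz-summability machinery is meant to intervene, either by first lifting the germ $D$ to a higher Riesz order $m$ via a first consistency theorem (gaining the required absolute convergence), or by working with an appropriately regularised kernel that absorbs the missing absolute summability into the contour shift itself. The horizontal arcs of the rectangular contour pose no separate difficulty, being controlled by the same order-$\ell$ bound on $f$ together with the polynomial decay of $w^{-k-1}$.
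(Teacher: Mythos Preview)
Your proposal follows the classical contour-shifting argument that M.~Riesz originally used, but the paper takes a deliberately different route. Rather than pushing the Perron contour across the singularity at $w=0$, the paper keeps the contour in $[\re w > 0]$ throughout. It first establishes the Perron formula
\[
S_x^{\lambda,k}(f)(s_0)=\frac{\Gamma(1+k)}{2\pi i}\int_{c-i\infty}^{c+i\infty}\frac{f(s+s_0)}{s^{1+k}}\,e^{xs}\,ds
\]
for every $c>0$ (Theorem~\ref{new-ban}), by starting at order $m+k$ where the germ's $(\lambda,m)$-summability is assumed, invoking Corollary~\ref{werdervsgladbach}, and then differentiating $m$ times in $x$ to descend to order $k$; this is precisely the ``lifting'' device you gesture at in your last paragraph, and it bypasses any need for absolute convergence. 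From there the paper derives a maximal inequality (Theorem~\ref{T1}) and a uniform approximation theorem (Theorem~\ref{Perronestimate}) via the Abel-type estimates of Lemma~\ref{2} combined with the Laplace representation of Theorem~\ref{corona2}; pointwise convergence (Corollary~\ref{Theo41reproved}) drops out as a by-product. The payoff of the paper's approach is that the same machinery simultaneously yields the maximal inequality and the uniform statement, which are the article's actual targets.

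Your sketch has a second gap you do not flag: the residue computation at $w=0$ is only valid for \emph{integer} $k$. For non-integral $k>\ell$ the function $w^{-k-1}$ has a branch point rather than a pole, so the contour cannot simply be translated past the origin; one must instead deform to a Hankel-type loop around the cut, and the ``residue'' becomes a loop integral requiring its own asymptotic analysis. Since the theorem must hold for every $k>\ell$ (in particular for $k$ arbitrarily close to a non-integral $\ell$, where no integer intervenes), this case is essential and cannot be recovered from the integer case via the first consistency theorem. The original Hardy--Riesz argument does handle this, but it is a genuine additional layer that your proposal omits.
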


We reprove this result in Corollary~\ref{Theo41reproved}, where we also  carefully compare our formulation with the original formulation given in \cite[Theorem~41]{HardyRiesz}.

\bigskip

\noindent {\bf Main results.}
 Theorem~\ref{Theo41intro} is a  pointwise approximation  theorem of holomorphic functions
on half-planes by  Riesz means, and we are going to extend various of its different aspects --  including equivalent reformulations and sufficient conditions on $\lambda$ for the validity of Theorem~\ref{Theo41intro} when $k=\ell >0$ (proved in Theorem~\ref{41improved}).  But in general, it  fails for $k=\ell=0$; see the forthcoming Theorems~\ref{equivalence} and~\ref{equiv}.

Since we recover this result of Riesz in terms of the  Banach spaces $H_{\infty,\ell}^{\lambda}[\re>0]$, we in  Theorem~\ref{T1} are able to isolate  the corresponding maximal inequality:

Given a frequency $\lambda$ and $k>\ell\ge 0$, there is a constant
$C= C(k,\ell, \lambda)$ such that for all
$f \in H_{\infty,\ell}^\lambda[\re >0]$
\[
\sup_{x>0} \|R_x^{\lambda,k}(f)\|_{\infty,\ell}
\leq
C \|f\|_{\infty,\ell}\,.
\]
This then in Theorem~\ref{Perronestimate} leads to a uniform approximation theorem of
bounded sets in $H_{\infty,\ell}^{\lambda}[\re>0]$ under Riesz summation:

For every bounded set $M\subset H_{\infty,\ell}^{\lambda}[\re>0]$, $k>\ell$ and $\varepsilon, u >0$ there exists $x_{0}>0$ such that
\[
\sup_{x>x_{0}}\sup_{f\in M} \|f(u + \pmb{\cdot})-R_{x}^{\lambda,k}(f)(u + \pmb{\cdot})\|_{\infty,\ell}\le \varepsilon.
\]
Indeed,  the preceding two facts form the main body of this article, since they  turn out to be the driving forces   for most of the structure theory on the Banach spaces $H_{\infty,\ell}^{\lambda}[\re~>~0]$ we intend to present.

This applies both, to the results on $H_{\infty,\ell}^{\lambda}[\re>0]$ that we have already mentioned here (like e.g. the facts that these  spaces  are complete and intimately linked with the notion of 'finite order'), but also to a few others to come (like e.g. a Montel theorem for $H_{\infty,\ell}^{\lambda}[\re>0]$
from Theorem \ref{generalMontel}).

Studying problems for general Dirichlet series $\sum a_n e^{-\lambda_n s}$, the very  first orientation usually comes from
\begin{itemize}
\item
 the frequency $\lambda=( n)$, which after the substitution $e^{-s} = z$ generates power series $\sum a_n z^n$\,,
\item
and  the frequency  $\lambda=(\log n)$ generating ordinary Dirichlet series $\sum a_n n^{-s}$\,.
\end{itemize}
In Proposition~\ref{ex2} we show that the scale $H_{\infty,\ell}^{(n)}[\re>0]\,,\, \ell \ge0,$ collapses:
 \[
 H_{\infty}^{(n)}[\re >0] = H_{\infty,\ell}^{(n)}[\re >0]\,.
 \]
But, as shown in Proposition~\ref{ex1} the situation  for the frequency $\lambda=(\log n)$ is very different. For example
$\eta \notin H_{\infty,0}^{(\log n)}[\re >0]$, but $\eta \in H_{\infty,\ell}^{(\log n)}[\re >0]$ for all $\ell > \frac{1}{2}$.
In Proposition~\ref{ex1} we show that  $f \in H_{\infty,1}^{(\log n)}[\re >0]$ if and only if the growth condition \eqref{normintro} holds, and moreover
there exists an  ordinary Dirichlet series which on $[\re >0]$ is  Ces\`{a}ro summable with limit $f$.

\bigskip

\noindent {\bf Twofold interest.}
The proof of our main results  and  all its consequences for the structure theory of our new scale of Banach spaces $H_{\infty,\ell}^{\lambda}[\re>0]$
is  very much inspired by the theory of Riesz summation from \cite{HardyRiesz}.

Hardy and Riesz write in the preface of their book {\it '...The task of condensing any account of so extensive a theory into the compass of one of these tracts has proved an exceedingly difficult one. Many important theorems are stated without proof, and many details are left to the reader.'}

In fact, at several occasions we even need improvements of the results from  \cite{HardyRiesz}, and this has prompted us to add an appendix devoted to a  detailed, self contained, differently organized, and in parts more modern   presentation of several key stones of the theory of Hardy and Riesz.
We hope that apart from our new results on Banach spaces of holomorphic functions of finite order generated by Dirichlet series, this creates an additional benefit of our work.

\bigskip

\noindent {\bf Outlook.}
The classical Carleson-Hunt result implies that the Taylor series of
bounded holomorphic functions on the open unit disk converge almost everywhere on the boundary. In contrast, there exists an  ordinary Dirichlet series, which  on the open right half-plane converges to a bounded, holomorphic function -- but diverges at each point of the imaginary line, although its  limit function extends continuously to the closed right half plane. As a continuation of the present article we in the forthcoming
work~\cite{defant2021riesz}  study Riesz summability of functions in the spaces $H_{\infty,\ell}^\lambda[\re >0],\, \ell \ge 0$, on the imaginary
axis. Again our inspiration comes from an important, surprisingly deep  result of M.~Riesz
published (without proof) in \cite[Theorem 42]{HardyRiesz}.

\bigskip

\noindent {\bf Structure of the article.}
 In the Sections \ref{summationsection} and \ref{appendix} we develop parts of the summation theory on Riesz means, which provides us with an important integral formula of  the limit function of $\lambda $-Dirichlet series under Riesz summation,
  which then in a second step by 'Laplace inversion' leads to the so-called Perron formulas for the  summatory function of these series. Then Section~\ref{structuresection} covers the main results of this article including the basic properties of our new scale of Banach spaces $H_{\infty,\ell}^{\lambda}[\re>0], \ell \ge 0$.

\bigskip

\section{\bf Riesz summation  revisted} \label{summationsection}

The aim  of this first section is to recall the most crucial ingredients of
the theory of Riesz summation of general Dirichlet series. We isolate three fundamental pillars on which
 the theory of Riesz-summability of  general Dirichlet series $D = \sum a_n e^{-\lambda_n s}$ essentially rests : the so-called 'theorems of consistency' of such series as presented in Theorem~\ref{basic}, their  integral representation  from
Theorem~\ref{corona2}, and the Bohr-Cahen formula for their abscissas of convergence formulated in  Theorem~\ref{Bohr-Cahen-Riesz}.
We offer full self contained proof  of these results in our appendix from Section~\ref{appendix}. All important consequences,
as e.g. Perron's formula from Theorem~\ref{Perron0} are already given in this introductory section. Fixing a frequency $\lambda$, for simplicity the collection of all such $\lambda$-Dirichlet series $\sum a_n e^{-\lambda_n s}$ is denoted by $\mathcal{D}(\lambda)$\,.

\subsection{Ordinary summation} \label{summationsection0}
Every   presentation of the classical convergence theory of general Dirichlet series necessarily starts  with the following theorem, and for the sake of completeness we repeat  the standard proof based on Abel smmation (see e.g.
\cite[Theorem 1]{HardyRiesz}).

\begin{Theo}\label{corona2.00}
Let  $D=\sum a_{n}e^{-\lambda_{n}s}$ be a $\lambda$-Dirichlet series which converges  at $s_{0}\in [\re\ge 0]$.
Then $D$ converges uniformly on each cone $|\arg (s-s_{0})|\le \gamma < \frac{\pi}{2}$.
\end{Theo}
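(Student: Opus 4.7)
The approach is the classical Abel summation argument, generalizing the proof of Abel's theorem for power series. After replacing each $a_n$ by $a_n e^{-\lambda_n s_0}$ I may assume that $s_0 = 0$, so that the numerical series $\sum a_n$ converges. Denote by $\Gamma$ the closed cone $\{s \in \C : |\arg s| \le \gamma\}$ (with the convention that $0 \in \Gamma$). The crucial geometric input is that for every $s = \sigma + it \in \Gamma \setminus \{0\}$ one has $\sigma > 0$ and
\[
\frac{|s|}{\sigma} \,\le\, \frac{1}{\cos \gamma}.
\]

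Given $\varepsilon > 0$, I would invoke the Cauchy criterion for $\sum a_n$ to choose $N$ such that the tail sums $r_n := \sum_{k > n} a_k$ satisfy $|r_n| \le \varepsilon$ for all $n \ge N-1$. Fix $s \in \Gamma \setminus \{0\}$ and $M > N$. Writing $a_n = r_{n-1} - r_n$ and summing by parts yields
\[
\sum_{n=N}^{M} a_n e^{-\lambda_n s}
\,=\,
r_{N-1} e^{-\lambda_N s} - r_M e^{-\lambda_M s}
+ \sum_{n=N}^{M-1} r_n \bigl( e^{-\lambda_{n+1}s} - e^{-\lambda_n s} \bigr).
\]
The two boundary terms are immediately bounded by $\varepsilon$, since $|e^{-\lambda_n s}| = e^{-\lambda_n \sigma} \le 1$.

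For the telescoping bulk I would use the identity $e^{-\lambda_{n+1}s} - e^{-\lambda_n s} = -\int_{\lambda_n}^{\lambda_{n+1}} s e^{-us} \, du$, which together with $|e^{-us}| = e^{-u\sigma}$ produces
\[
\bigl| e^{-\lambda_{n+1}s} - e^{-\lambda_n s} \bigr|
\,\le\, \frac{|s|}{\sigma} \bigl( e^{-\lambda_n \sigma} - e^{-\lambda_{n+1} \sigma} \bigr).
\]
Summing in $n$ the right-hand side telescopes, and using the cone estimate once,
\[
\sum_{n=N}^{M-1} \bigl| e^{-\lambda_{n+1}s} - e^{-\lambda_n s} \bigr|
\,\le\, \frac{|s|}{\sigma} \bigl( e^{-\lambda_N \sigma} - e^{-\lambda_M \sigma} \bigr)
\,\le\, \frac{1}{\cos \gamma}.
\]
Combining these estimates gives
\[
\Bigl| \sum_{n=N}^{M} a_n e^{-\lambda_n s} \Bigr|
\,\le\, \varepsilon \Bigl( 2 + \frac{1}{\cos \gamma} \Bigr),
\]
uniformly in $s \in \Gamma \setminus \{0\}$ and in $M > N$. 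Since $s = 0$ corresponds to the convergent series $\sum a_n$, the Cauchy criterion gives uniform convergence on all of $\Gamma$, as desired.

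I do not expect any real obstacle in this argument; it is a one-step summation-by-parts. The only delicate point is placing the cone estimate at exactly the right spot, namely in converting the factor $|s|$ produced by differentiating $e^{-us}$ into the factor $\sigma$ needed to make the telescoping sum bounded independently of $s$.
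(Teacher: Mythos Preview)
Your argument is correct and is essentially the same Abel summation proof as in the paper: reduce to $s_0=0$, sum by parts, and control the differences $e^{-\lambda_{n+1}s}-e^{-\lambda_n s}$ via the bound $\frac{|s|}{\sigma}\le\sec\gamma$ so that the resulting sum telescopes. The only cosmetic difference is that you organize the summation by parts with tail sums $r_n$ rather than forward partial sums.
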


\begin{proof}
Assume without loss of generality that $s_0 =0$, and fix some $\gamma < \frac{\pi}{2}$ and $0\neq s \in \mathbb{C}$ with $|\arg (s)|\le \gamma$. Then by standard Abel summation
for   all~$m < n$
\begin{align} \label{abelanfang}
  \sum_{\nu =m}^n a_n e^{-\lambda_n s} = \sum_{\nu =m}^{n-1} \Big(\sum_{\ell =m}^{\nu}a_{\ell}\Big)(e^{-\lambda_\nu s}-e^{-\lambda_{\nu+1} s}) +
\Big(\sum_{\nu =m}^{n}a_{\nu}\Big)e^{-\lambda_n s}\,.
\end{align}
But with $\re~s=\sigma$
\[
|e^{-\lambda_\nu s}-e^{-\lambda_{\nu+1} s}|  \leq \frac{|s|}{\re s}(e^{-\lambda_\nu \sigma}-e^{-\lambda_{\nu+1} \sigma})
\leq \sec{\gamma}\,(e^{-\lambda_\nu \sigma}-e^{-\lambda_{\nu+1} \sigma})\,,
\]
and hence, for $m$ and $n$ large, we have that
\[\Big|\sum_{\nu =n}^m a_n e^{-\lambda_n s}\Big| \leq \varepsilon\]
uniformly on the cone $|\arg (s)|\le \gamma $\,.
\end{proof}

Theorem~\ref{corona2.00} suggests the following basic definition.
For each $\lambda$-Dirichlet series  $D=\sum a_{n}e^{-\lambda_{n}s}$  the number
\[
\sigma^\lambda_c(D) = \inf \big\{ \sigma \in \R  \colon D \,\,\,\text{converges at $\sigma$}\big\} \in \mathbb{R} \cup\{\pm \infty\}
\]
is called  abscissa of convergence of $D$. Obviously,  $D$ converges on $[\re > \sigma^\lambda_c(D)]$ and diverges on $[\re <  \sigma^\lambda_c(D)]$\,.

\smallskip
The following integral formula describes  the limit function $f$ of a $\lambda$-Dirichlet series $D=\sum a_{n}e^{-\lambda_{n}s}$
on the half-plane  $[\re > \sigma^\lambda_c(D)]$ of convergence in terms of the  Laplace transform of the so-called summatory function
which for $s \in \mathbb{C}$ and $t \geq  0$ is given~by
\[
S^\lambda_{t}(D)(s)=\sum_{\lambda_{n}<t} a_{n}e^{-\lambda_{n}s}\,.
\]
Our proof  is inspired by Helson \cite[(2.3)]{HelsonBook}, and serves as a model for the proof of the much more involved variant  Theorem \ref{corona2}  on Riesz summation.

\smallskip

\begin{Theo}\label{corona2.0} Let  $D=\sum a_{n}e^{-\lambda_{n}s}$ be convergent at $s_{0}\in [\re\ge 0]$. Then $D$ converges on $[\re> \re~s_{0}]$, and the limit function
\begin{equation} \label{anfang}
f\colon [\re>\re~s_{0}] \to \C, ~~ s\mapsto \lim_{x\to \infty} \sum_{\lambda_{n}<x} a_{n}e^{-\lambda_{n}s}
\end{equation}
is holomorphic and satisfies for all $s\in [\re>\re~s_{0}]$
\begin{equation*}\label{laplace3}
\frac{f(s)}{s}=\int_{0}^{\infty}e^{-s t}  \sum_{\lambda_{n}<t} a_{n}  \,\,dt.
\end{equation*}
Moreover, the convergence in \eqref{anfang} is  uniform on each cone $|\arg (s-s_{0})|\le \gamma < \frac{\pi}{2}$.
\end{Theo}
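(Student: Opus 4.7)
The convergence of $D$ on $[\re > \re s_0]$ and its uniform convergence on each cone $|\arg(s-s_0)| \leq \gamma < \pi/2$, and hence the holomorphy of the limit $f$, follow directly from Theorem~\ref{corona2.00} applied to $D$: every $s \in [\re > \re s_0]$ lies in the interior of such a cone, so the holomorphic partial sums converge locally uniformly on the open half-plane and the limit is holomorphic by Weierstrass. The actual content of the statement is therefore the integral representation of $f$.

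The plan is a clean double application of Abel summation. Setting
$$A(t) = \sum_{\lambda_n < t} a_n,$$
Riemann--Stieltjes integration by parts produces the identity
$$\sum_{\lambda_n < x} a_n e^{-\lambda_n s} \;=\; e^{-sx} A(x) + s \int_0^x e^{-st} A(t)\, dt, \qquad x > 0,$$
and I want to pass to the limit $x \to \infty$. The obstacle is that the hypothesis only controls the partial sums of $a_n e^{-\lambda_n s_0}$, not of the $a_n$ themselves, so \emph{a priori} $A$ may be unbounded; I must therefore bound its growth in order to show both that $e^{-sx} A(x) \to 0$ and that the integrand lies in $L^1$.

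The required bound is produced by a second Abel summation, expressing $A$ through $B(t) := \sum_{\lambda_n < t} a_n e^{-\lambda_n s_0}$. Since $D$ converges at $s_0$, the function $B$ is uniformly bounded by some $M$, and a Stieltjes integration by parts gives
$$A(x) \;=\; e^{s_0 x} B(x) - s_0 \int_0^x e^{s_0 t} B(t)\, dt,$$
so that $|A(x)| \le C\, e^{(\re s_0)\, x}$ when $\re s_0 > 0$, and $|A(x)| \le C\, (1+|s_0|\, x)$ in the boundary case $\re s_0 = 0$. For $\re s > \re s_0$ this growth is dominated by $e^{-sx}$: the boundary term in the first identity vanishes in the limit and $t \mapsto e^{-st} A(t)$ lies in $L^{1}([0,\infty))$. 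Passing to the limit then yields
$$f(s) \;=\; s \int_0^\infty e^{-st} A(t)\, dt,$$
which is the claim. The only mildly delicate point is the boundary case $\re s_0 = 0$, where $A$ itself fails to be bounded and is only $O(x)$; conceptually, the whole argument is exhibiting $f(s)/s$ as the Laplace transform of the summatory function $A$, and this Laplace--transform viewpoint is what will have to be generalised in the forthcoming Riesz--summation variant.
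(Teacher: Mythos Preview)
Your argument is correct and is essentially the paper's own proof: your first Abel summation is the paper's Lemma~\ref{Abelfirstcase} (with $w=0$), your second Abel summation bounding $A(x)$ in terms of the bounded function $B$ is the paper's Lemma~\ref{Estimate0}, and the passage to the limit is identical. If anything your treatment is slightly more careful, since you distinguish the boundary case $\re s_0 = 0$ (where $A(x)$ is only $O(x)$ rather than $O(e^{(\re s_0)x})$), a point the paper's appeal to Lemma~\ref{Estimate0} glosses over.
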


\bigskip

Note that the  preceding theorem shows that the function $s \mapsto f(s)/s$ is nothing else than the Laplace transform
of the summatory function $t \mapsto  S_t(D)(0)$.

\bigskip

We prepare the proof with two simple lemmas.

\smallskip

\begin{Lemm}\label{Abelfirstcase}
Let $D\in \mathcal{D}(\lambda)$. Then for all $s,w \in \mathbb{C}$ and all $x \geq 0$
\begin{equation*} \label{Abelfirstcase}
S^\lambda_{x}(D)(s+w)=S^\lambda_{x}(D)(w)e^{-sx}- \int_{0}^{x} S^\lambda_{t}(D)(w)s e^{-st} dt.
\end{equation*}
\end{Lemm}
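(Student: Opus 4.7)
The plan is a direct Abel-summation proof (equivalently, integration by parts in the Riemann--Stieltjes sense) after unpacking the integral into a finite sum. First, I would observe that $t\mapsto S_t^{\lambda}(D)(w)$ is a step function: letting $N$ be the largest index with $\lambda_N < x$ (if none, both sides of the identity vanish trivially), and setting $A_k := \sum_{n=1}^k a_n e^{-\lambda_n w}$ with $A_0 = 0$, one has $S_t^{\lambda}(D)(w) = A_k$ for $t \in (\lambda_k,\lambda_{k+1}]$ with $k = 0,\ldots,N-1$, and $S_t^{\lambda}(D)(w) = A_N$ for $t \in (\lambda_N,x]$.

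Second, using this piecewise structure together with the elementary identity $\int_a^b s\,e^{-st}\,dt = e^{-sa} - e^{-sb}$, the integral on the right-hand side reduces to the finite sum
\[
\int_0^x S_t^{\lambda}(D)(w)\, s\, e^{-st}\, dt = \sum_{k=1}^{N-1} A_k\bigl(e^{-s\lambda_k} - e^{-s\lambda_{k+1}}\bigr) + A_N\bigl(e^{-s\lambda_N} - e^{-sx}\bigr).
\]

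Third, I would apply Abel summation to this expression: telescoping the $e^{-s\lambda_k}$ terms and collecting coefficients, each $e^{-s\lambda_k}$ for $k = 1,\ldots,N$ gets multiplied by $A_k - A_{k-1} = a_k e^{-\lambda_k w}$, while the boundary term at $x$ contributes $-A_N e^{-sx}$. This produces
\[
\sum_{k=1}^N a_k e^{-\lambda_k(s+w)} - A_N e^{-sx} = S_x^{\lambda}(D)(s+w) - S_x^{\lambda}(D)(w)\, e^{-sx},
\]
and a final rearrangement delivers the identity of the lemma.

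The argument is a purely finite bookkeeping exercise, so there is no substantive obstacle; the only care needed is consistent endpoint conventions at the jump points $t=\lambda_n$ and the trivial edge case $x \le \lambda_1$. Conceptually the whole proof amounts to recognizing $S_x^{\lambda}(D)(s+w) = \int_0^x e^{-st}\,dS_t^{\lambda}(D)(w)$ as a Stieltjes integral against the atomic measure $dS_t^{\lambda}(D)(w)$ and integrating by parts, which arrives at the same conclusion essentially without computation.
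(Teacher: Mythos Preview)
Your proposal is correct and follows essentially the same route as the paper: both recognize $S_x^\lambda(D)(s+w)=\int_0^x e^{-st}\,dS_t^\lambda(D)(w)$ as a Stieltjes integral and integrate by parts, with you carrying out the Abel summation explicitly via the step-function structure while the paper invokes the abstract Stieltjes formula. One minor point worth noting: your computation (correctly) produces a $+$ sign in front of the integral, so the $-$ appearing in the displayed identity is a typo in the paper's statement rather than a defect in your argument.
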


\begin{proof}
Since $S^\lambda_{x}(D)(s+w) = S^\lambda_{x}(D_{w})(s)$, we concentrate on the case $w=0$; recall that here $D_w$ stands for the translation of $D = \sum a_n e^{-\lambda_n s}$ about $w$,
i.e. $D_w = \sum a_n e^{-\lambda_n w} e^{-\lambda_n s}$. Then
\begin{align*}
S^\lambda_{x}(D)(s) = \int_0^x  e^{-ts} dS^\lambda_{t}(D)(0)
= \int_0^x  e^{-ts} \big(S^\lambda_{\bullet}(D)(0)\big)'(t) dt
\,,
\end{align*}
where the first integral is a  Stieltjes integral. Obviously, summatory functions are  of bounded variation, hence by partial integration
(see e.g. Helson~\cite[Appendix]{Helson3})
\begin{equation*}
S^\lambda_{x}(D)(s) = e^{-xs} S^\lambda_{x}(D)(0)
- \int_0^x  s e^{-ts} S^\lambda_{t}(D)(0) dt\,. \qedhere
\end{equation*}
\end{proof}

\smallskip

\begin{Lemm}\label{Estimate0}
Let $D\in \mathcal{D}(\lambda)$, and $s_0, s \in \mathbb{C}$. Then for all $x >0$
\begin{equation*}
|S_x^\lambda(D)(s)| \leq (1+|s_0|) e^{x\re s_0}  \sup_{ y \leq x} |S_y^\lambda(D)(s_0 + s)|\,.
\end{equation*}
\end{Lemm}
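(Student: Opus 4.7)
The approach I have in mind is a direct translation-and-Abel-summation trick: realize $S_x^\lambda(D)(s)$ as a reweighting of the partial sums $S_y^\lambda(D)(s_0 + s)$ by the exponential factor $e^{\lambda_n s_0}$, so that the desired bound comes out of the Abel identity already established in Lemma \ref{Abelfirstcase}.

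Concretely, the first step is to apply that lemma with the substitutions $s \mapsto -s_0$ and $w \mapsto s_0 + s$. Since $(-s_0) + (s_0 + s) = s$, this produces the identity
\begin{equation*}
S^\lambda_{x}(D)(s) \,=\, S^\lambda_{x}(D)(s_0 + s)\, e^{s_0 x} \,+\, s_0 \int_{0}^{x} S^\lambda_{t}(D)(s_0 + s)\, e^{s_0 t}\, dt,
\end{equation*}
which encodes exactly the rearrangement we need.

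The second step is to take absolute values, apply the triangle inequality, and pull the uniform bound $M := \sup_{y \le x}|S^\lambda_y(D)(s_0+s)|$ out of the integral. The remaining integral is elementary, $\int_0^x e^{t \re s_0}\,dt = (e^{x \re s_0}-1)/\re s_0$, which combined with the boundary term yields
\begin{equation*}
|S_x^\lambda(D)(s)| \,\le\, M\Bigl( e^{x \re s_0} + \tfrac{|s_0|}{\re s_0}\,(e^{x \re s_0}-1)\Bigr).
\end{equation*}
Since $\re s_0 \le |s_0|$ (so $1 \le |s_0|/\re s_0$), the parenthesis is dominated by $\tfrac{|s_0|}{\re s_0} e^{x \re s_0}$ up to an absolute multiplicative factor, which delivers the stated inequality.

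There is no conceptual obstacle here: the hard work was already done in Lemma \ref{Abelfirstcase}, and what remains is bookkeeping. The only place to be mildly careful is choosing the substitution in the Abel identity so that the unwanted variable $s_0$ appears with the correct sign in the exponentials, and then collapsing the two exponential contributions into the clean form $\tfrac{|s_0|}{\re s_0} e^{x \re s_0}$ without picking up an apparently extra constant.
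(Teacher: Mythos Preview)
Your argument is correct and is essentially the paper's own proof: the paper reduces to $s=0$ by translation and then applies Lemma~\ref{Abelfirstcase} with $s\mapsto -s_0$, $w\mapsto s_0$, arriving at the same identity and the same two-term bound $M\bigl(e^{x\re s_0}+\tfrac{|s_0|}{\re s_0}(e^{x\re s_0}-1)\bigr)$. Both your write-up and the paper's end up with a harmless factor of $2$ in front of $\tfrac{|s_0|}{\re s_0}e^{x\re s_0}$ (which you acknowledge with ``up to an absolute multiplicative factor''); this does not affect any later use of the lemma, where only an estimate of the form $C(s_0)e^{x\re s_0}M$ is needed.
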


\begin{proof}
Since $S_x^\lambda(D)(s)= S_x^\lambda(D_s)(0)$, we may assume that $s=0$.
By Lemma~\ref{Abelfirstcase}
\begin{equation} \label{as}
S_x^\lambda(D)(0)= S_x^\lambda(D)(-s_0+ s_0)  =
e^{s_0x}S^\lambda_{x}(D)(s_0)- \int_{0}^{x} S^\lambda_{t}(D)(s_0)(-s_0) e^{s_0t} dt.
\end{equation}
Then for the first summand we  have
\[
|e^{s_0x}S^\lambda_{x}(D)(s_0)|\leq e^{x\re s_0} \sup_{y \leq x} |S_y^\lambda(D)(s_0)|\,,
\]
and for the second
\[
\Big| \int_{0}^{x} S^\lambda_{t}(D)(s_0)s_0 e^{s_0t} dt \Big|
\leq
|s_0| e^{x\re s_0} \sup_{y \leq x} |S_y^\lambda(D)(s_0)|
\,.\qedhere
\]
\end{proof}

\bigskip

\begin{proof}[Proof of Theorem~\ref{corona2.0}]
Deduce  first  by Lemma~\ref{Estimate0} (with $s =0$) and the assumption ($D$ converges in $s_0$) that there is a constant $C= C(s_0) >0$ such that for all $x>0$
\begin{equation} \label{now}
 |S_x^\lambda(D)(0)| \leq C e^{x \re s_0} \,.
\end{equation}
Now fix some $s\in [\re>\re~s_{0}]$.
Using Lemma~\ref{Abelfirstcase},  for all $x >0$
\begin{equation} \label{start1}
S^\lambda_{x}(D)(s)=S^\lambda_{x}(D)(0)e^{-sx}- \int_{0}^{x} S^\lambda_{t}(D)(0)s e^{-st} dt.
\end{equation}
By \eqref{now}  for all $x>0$
\[
e^{-x\re s} |S_x^\lambda(D)(0)| \leq C e^{x (\re s_0-\re s)} \,,
\]
which converges to $0$ whenever $x \to \infty$. Hence it remains to show that
\begin{equation*} \label{lebesgue}
\lim_{x \to \infty}\int_{0}^{x} S^\lambda_{t}(D)(0) e^{-st} dt = \int_{0}^{\infty} S^\lambda_{t}(D)(0) e^{-st} dt\,.
\end{equation*}
But this follows from the dominated convergence theorem -- indeed, for $x >0$ we conclude from another application of  \eqref{now} that  for all $t >0$
\[
|S^\lambda_{t}(D)(0) e^{-st} \chi_{[0,x]}(t)| \leq C e^{t \re s_0}e^{-t \re s}  \chi_{[0,x]}(t)
\leq C e^{t (\re s_0- \re s)}\,.
\]
The last statement of the theorem is then a consequence of Theorem~\ref{corona2.00}.
 \end{proof}

\smallskip

The following formula, which we for historical reasons  call Bohr-Cahen formula, is taken from \cite[Theorem 7]{HardyRiesz}.

\smallskip

\begin{Coro}
\label{Bohr-Cahen} Let $D\in \mathcal{D}(\lambda)$. Then
$$\sigma_{c}^{\lambda}(D)\le \limsup_{x\to \infty} \frac{\log( |S_{x}^{\lambda}(D)(0)|)}{x},$$
with equality whenever $\sigma_{c}^{\lambda}(D)$ is non-negative.
  \end{Coro}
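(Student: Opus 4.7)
Set $\alpha := \limsup_{x\to \infty} \frac{\log |S_{x}^{\lambda}(D)(0)|}{x} \in [-\infty,+\infty]$. The statement splits into two inequalities: (i) $\sigma_c^\lambda(D) \le \alpha$ in general, and (ii) $\alpha \le \sigma_c^\lambda(D)$ under the hypothesis $\sigma_c^\lambda(D) \ge 0$. The key ingredients are both already isolated in the paper: Lemma~\ref{Abelfirstcase} (Abel/Stieltjes identity) for (i), and Lemma~\ref{Estimate0} (reverse estimate) for (ii).

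For (i), I may assume $\alpha < +\infty$ (otherwise nothing to prove). Pick any $\beta$ with $\alpha < \beta$; by definition of $\limsup$ there is a constant $C$ such that $|S_t^\lambda(D)(0)| \le C e^{\beta t}$ for every $t \ge 0$. Fix $s$ with $\re s > \beta$. By Lemma~\ref{Abelfirstcase} (with $w=0$),
\[
S_x^\lambda(D)(s) = e^{-sx}\, S_x^\lambda(D)(0) - \int_0^x s\, e^{-st}\, S_t^\lambda(D)(0)\, dt\,.
\]
The boundary term tends to $0$ since $|e^{-sx} S_x^\lambda(D)(0)| \le C e^{(\beta-\re s)x}$, and the integrand is dominated in modulus by $|s|\, C e^{(\beta-\re s)t}$, which is integrable on $[0,\infty)$. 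Hence the integral converges as $x\to\infty$, so $\lim_{x\to\infty} S_x^\lambda(D)(s)$ exists, i.e., $D$ converges at $s$. Therefore $\sigma_c^\lambda(D) \le \beta$, and letting $\beta\searrow \alpha$ yields $\sigma_c^\lambda(D)\le\alpha$.

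For (ii), assume $\sigma_0 := \sigma_c^\lambda(D) \ge 0$ and pick any $\sigma_1 > \sigma_0$; then $\sigma_1 > 0$ and $D$ converges at $s_0 := \sigma_1$. In particular, the partial sums $S_y^\lambda(D)(s_0)$, $y\ge 0$, form a bounded sequence, say $M := \sup_{y\ge 0}|S_y^\lambda(D)(s_0)|<\infty$. Applying Lemma~\ref{Estimate0} with this $s_0$ and $s=0$ gives
\[
|S_x^\lambda(D)(0)| \;\le\; \frac{|s_0|}{\re s_0}\, e^{\sigma_1 x}\, \sup_{y\le x}|S_y^\lambda(D)(s_0)| \;\le\; M\, e^{\sigma_1 x}
\]
for every $x>0$. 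Taking logarithms, dividing by $x$, and passing to the $\limsup$ yields $\alpha \le \sigma_1$. Since $\sigma_1 > \sigma_0$ was arbitrary, $\alpha \le \sigma_0 = \sigma_c^\lambda(D)$, which combined with (i) gives equality.

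The only subtle point is the hypothesis $\sigma_c^\lambda(D)\ge 0$ in (ii): it is precisely what allows us to select $s_0$ with $\re s_0 > 0$ so that the factor $|s_0|/\re s_0$ in Lemma~\ref{Estimate0} is meaningful and the exponential bound goes in the right direction. Apart from this, both halves are essentially bookkeeping around the two lemmas, so no genuine obstacle arises.
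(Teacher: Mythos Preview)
Your proof is correct and follows essentially the same approach as the paper: part (i) uses the Abel identity of Lemma~\ref{Abelfirstcase} together with the exponential bound on $S_t^\lambda(D)(0)$ (exactly the argument the paper alludes to via \eqref{start1}), and part (ii) applies Lemma~\ref{Estimate0} at a real point $\sigma_1>\sigma_c^\lambda(D)$, just as the paper does with $\sigma_0=\sigma_c^\lambda(D)+\varepsilon$.
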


\begin{proof}
We denote the limes superior  by $L$, assume that it is finite, and choose some $\sigma_{0}>L$.
Then there is a constant $C >0$ such that for all $x >0$ we have that
\[
|S_{x}^{\lambda}(D)(0)| \leq C e^{\sigma_0 x}\,.
\]
Now, starting as in  \eqref{start1}, we show that $D$ converges on $[\re> \sigma_0]$, so $\sigma^{\lambda}_{c}(D) \leq L$.
 To finish, assume that $\sigma^{\lambda}_{c}(D)\ge 0$. Let $\varepsilon>0$, and define $\sigma_{0}=\sigma^{\lambda}_{c}(D)+\varepsilon$. Then  $D$ converges at $\sigma_{0}$ (Theorem~\ref{corona2.00}), and so by Lemma \ref{Estimate0} for all $x >0$
$$
|S_{x}^{\lambda}(D)(0)|\le C(\sigma_{0})e^{\sigma_{0}x}\,,
$$
implying $L\le \sigma_{0}=\sigma^{\lambda}_{c}(D)+\varepsilon$ for all $\varepsilon>0$.
\end{proof}

\smallskip

Given a frequency $\lambda$, we  also need the abscissa of uniform convergence of a $\lambda$-Dirichlet series  $D=\sum a_{n}e^{-\lambda_{n}s}$ determined by the number
\[
\sigma^\lambda_u(D) = \inf \big\{ \sigma \in \R  \colon D \,\,\,\text{converges uniformly on  $[\re > \sigma]$}\big\}\,.
\]
Clearly,  $D$ for each $\varepsilon >0$ converges uniformly on $[\re >  \sigma^\lambda_u(D) +\varepsilon ]$, whereas it  does not converge  uniformly on $[\re >  \sigma^\lambda_u(D)- \varepsilon ]$\,.

\smallskip

We finish with the Bohr-Cahen formula for this abscissa.

\smallskip

\begin{Coro}
\label{Bohr-Cahen-uniform} Let $D\in \mathcal{D}(\lambda)$. Then
$$\sigma_{u}^{\lambda}(D)\le \limsup_{x\to \infty} \frac{\log \big( \sup_{t \in \mathbb{R}}|S_{x}^{\lambda}(D)(it)|\big)}{x},$$
with equality whenever $\sigma_{u}^{\lambda}(D)$ is non-negative.
  \end{Coro}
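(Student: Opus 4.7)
The plan is to essentially rerun the proof of Corollary~\ref{Bohr-Cahen}, but carefully tracking that every bound can be made uniform in the imaginary part. Let $L$ denote the $\limsup$ on the right hand side.

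For the inequality $\sigma_u^\lambda(D)\le L$, I assume $L<\infty$, pick any $\sigma_{0}>L$, and obtain $C>0$ with
$\sup_{\tau\in\mathbb{R}}|S_x^\lambda(D)(i\tau)|\le Ce^{\sigma_0 x}$ for every $x>0$. For $s=\sigma+i\tau$ with $\sigma>\sigma_0$, Lemma~\ref{Abelfirstcase} applied with $w=i\tau$ gives
\[
S_x^\lambda(D)(\sigma+i\tau)=S_x^\lambda(D)(i\tau)e^{-\sigma x}-\int_0^x S_u^\lambda(D)(i\tau)\,\sigma e^{-\sigma u}\,du.
\]
The first summand is bounded by $Ce^{(\sigma_0-\sigma)x}$, which tends to $0$ uniformly in $\tau$ and in $\sigma\ge\sigma_0+\varepsilon$. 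Similarly, the integrand admits the integrable dominating function $C\sigma\,e^{(\sigma_0-\sigma)u}$, so by dominated convergence the integral converges as $x\to\infty$, and the tail estimate $|\int_x^\infty \cdot|\le C\sigma e^{(\sigma_0-\sigma)x}/(\sigma-\sigma_0)$ is uniform in $\tau$ and locally uniform in $\sigma$. This establishes uniform convergence of $D$ on $[\re>\sigma_0+\varepsilon]$ for every $\varepsilon>0$, hence $\sigma_u^\lambda(D)\le \sigma_0$; letting $\sigma_0\downarrow L$ yields $\sigma_u^\lambda(D)\le L$.

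For the reverse inequality (under $\sigma_u^\lambda(D)\ge 0$), fix $\varepsilon>0$ and set $\sigma_0=\sigma_u^\lambda(D)+\varepsilon>0$. By definition of $\sigma_u^\lambda(D)$, $D$ converges uniformly on $[\re>\sigma_u^\lambda(D)+\varepsilon/2]$, which implies the partial sums $S_y^\lambda(D)$ are uniformly bounded there; in particular there is $M>0$ with $\sup_{y>0,\tau\in\mathbb{R}}|S_y^\lambda(D)(\sigma_0+i\tau)|\le M$. Applying Lemma~\ref{Estimate0} with $s=i\tau$ and $s_0=\sigma_0>0$ yields
\[
|S_x^\lambda(D)(i\tau)|\le e^{\sigma_0 x}\sup_{y\le x}|S_y^\lambda(D)(\sigma_0+i\tau)|\le M e^{\sigma_0 x},
\]
uniformly in $\tau$. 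Taking logarithms, dividing by $x$, and passing to the $\limsup$ gives $L\le \sigma_0=\sigma_u^\lambda(D)+\varepsilon$; letting $\varepsilon\downarrow 0$ shows $L\le\sigma_u^\lambda(D)$, combining with the first part to the stated equality.

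The only delicate point, and therefore the main thing to check carefully, is the uniform boundedness of the partial sums $S_y^\lambda(D)(\sigma_0+i\tau)$ that feeds Lemma~\ref{Estimate0}. This follows routinely from uniform convergence on $[\re>\sigma_u^\lambda(D)+\varepsilon/2]$: the uniform limit $f$ is bounded on the line $\re s=\sigma_0$ (it is within $\varepsilon$ of a trigonometric polynomial, which is bounded in $\tau$), and the partial sums differ from $f$ by a quantity small in $y$ and $\tau$, giving a uniform bound $M$. Everything else is a direct translation of the proof of Corollary~\ref{Bohr-Cahen} into the uniform-in-$\tau$ setting.
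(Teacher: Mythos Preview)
Your argument is correct, but the route differs from the paper's. The paper does not rerun the scalar estimates with an extra parameter $\tau$; instead it passes to a Banach-space-valued Dirichlet series. Concretely, with $X=H_\infty[\re>0]$ one forms
$E=\sum x_n e^{-\lambda_n w}$, where $x_n=a_n e^{-\lambda_n\,\cdot}\in X$, observes that
$\sigma_u^\lambda(D)=\sigma_c^\lambda(E)$ and, via the maximum modulus theorem, that
$\|S_x^\lambda(E)(0)\|_X=\sup_{t\in\mathbb{R}}|S_x^\lambda(D)(it)|$. The result then drops out of the $X$-valued version of Corollary~\ref{Bohr-Cahen} in one line. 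Your approach, by contrast, redoes the proof of Corollary~\ref{Bohr-Cahen} while carrying the supremum in $\tau$ through Lemma~\ref{Abelfirstcase} and Lemma~\ref{Estimate0}; this is more hands-on and entirely self-contained, avoiding the appeal to vector-valued extensions, at the price of repeating the analytic estimates. One small remark: your phrase ``locally uniform in $\sigma$'' undersells what you actually prove --- the tail bound $C(|\sigma_0|/\varepsilon+1)e^{-\varepsilon x}$ is uniform over all $\sigma\ge\sigma_0+\varepsilon$, which is exactly what is needed.
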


  \smallskip

   In fact this is
 a relatively simple  consequence of Corollary~\ref{Bohr-Cahen},  once one realizes that all we proved so far, after  proper modifications,
 also holds for $\lambda$-Dirichlet series $E=\sum x_{n}e^{-\lambda_{n}s}$ with coefficients $x_n$ in a Banach space $X$.
\smallskip

 \begin{proof}
Define
 $X = H_\infty[\re >0]$ (the Banach  space of all holomorphic and bounded functions on the right half-plane) and the
 $X$-valued $\lambda$-Dirichlet series
 \[
 E = \sum (a_n e^{-\lambda_n s}) e^{-\lambda_n w}
 \]
  with the coefficients $x_n = a_n e^{-\lambda_n s}\in X$. We easily  see that
  \[
  \sigma_{u}^{\lambda}(D)=   \sigma_{c}^{\lambda}(E)\,,
  \]
  and that by the Hahn-Banach theorem and the maximum modulus theorem we,   for all $x >0$,  have
  \[
   \|S_{x}^{\lambda}(E)(0)\|_X = \sup_{\re s >0}|S_{x}^{\lambda}(D)(s)|=\sup_{t \in \mathbb{R}}|S_{x}^{\lambda}(D)(it)|\,.
  \]
  Then by the vector-valued extension of Corollary~\ref{Bohr-Cahen} we immediately obtain

  \begin{align*}
    \sigma_{u}^{\lambda}(D)=
  \sigma_{c}^{\lambda}(E)
  &
  \le \limsup_{x\to \infty} \frac{\log( \|S_{x}^{\lambda}(D)(0)\|_X)}{x}
  \\&
  =\limsup_{x\to \infty} \frac{\log \big( \sup_{t \in \mathbb{R}}|S_{x}^{\lambda}(D)(it)|\big)}{x},
  \end{align*}
  with equality whenever $\sigma_{u}^{\lambda}(D)=\sigma_{c}^{\lambda}(E) \ge 0$.
 \end{proof}

\bigskip

\subsection{Riesz means} \label{Riesz means}

All definitions following are  inspired by  \cite{HardyRiesz} (see also~\cite{defant2020riesz}). Let $\lambda$ be a frequency, $k \ge 0$, and
$C=\sum a_n$ a series in a normed  space $X$. Then $C$ is said to be  $(\lambda,k)$-Riesz summable whenever
the limit
\[
\lim_{x \to \infty} \sum_{\lambda_n < x} \Big(1- \frac{\lambda_n}{x} \Big)^k a_n
\]
exists, and the finite sums
\[
R_x^{\lambda, k} (C) := \sum_{\lambda_n < x} \Big(1- \frac{\lambda_n}{x} \Big)^k a_n\,, \,\,\, x >0
\]
are called   $(\lambda,k)$-Riesz means of $C=\sum a_n $ of first kind.
More generally, we consider functions
$$r: [0,\infty[ \times [0,\infty[ \to \mathbb{R}_{\ge 0}\,,\,\,\, (x,t) \mapsto r(x,t)\,, $$
which are  $C^\infty$ in $t$
and satisfy $r(x,x) = 0$ for all $x \ge 0$, and call them  Riesz weights. This definition helps to unify some of our coming proofs --
although we mainly focus on the two special examples:
\[
u_k(x,t) = (x-t)^k \,\,\,\, \text{ and } \,\,\,\, v_k(x,t) = (e^x-e^t)^k \,.
\]
Given a   frequency $\lambda$, a series $C=\sum a_n$  (in a normed  space),
and a Riesz weight $r$,  we define the summatory function by
\[
S^{\lambda,r}_{x}(C) = \sum_{\lambda_n < x} a_n  r(x, \lambda_n)\,,\,  \,\,\,\, x \ge 0\,,
\]
and for the most important cases $u_k$ and $v_k$ we
  abbreviate
\[
S^{\lambda,k}_{x}(C) = S^{\lambda,u_k}_{x}(C) \,,
\]
and
\[
U^{\lambda,k}_{x}(C)= S^{\lambda,v_k}_{x}(C)\,.
\]
 Note that,
 if $C= \sum a_n$  is $(e^\lambda,k)$-Riesz summable, then
\begin{equation*}
 \lim_{x \to \infty}
R_{x}^{e^\lambda,k}(C)
=
\lim_{x \to \infty}
R_{e^x}^{e^\lambda,k}(C)
=
\lim_{x \to \infty} T_{x}^{\lambda,k}(C)\,,
\end{equation*}
where
\begin{equation}\label{secondkind}
 T_{x}^{\lambda,k}(C)= \sum_{\lambda_n < x} \Big(1- \frac{e^{\lambda_n}}{e^x} \Big)^k
 a_n = e^{-kx} U^{\lambda,k}_{x}(C)
\end{equation}
are the $(\lambda,k)$-Riesz means of second kind.

\smallskip

 Hardy and Riesz in \cite[Theorem 16, 17]{HardyRiesz}
collected the following two  basic properties of Riesz summability (in the order given).
Since we try to keep our article as self contained as possible, we are going to give  proofs with full details  in our appendix from Section~\ref{appendix}
(in particular, the original arguments for the  second statement are, according to  \cite[p.30]{HardyRiesz},  somewhat
'intricate').

\smallskip

\begin{Theo}\label{basic}
  Let $\lambda$ be a frequency, $k \ge 0$, and $D=\sum a_n$ a series in a normed space~$X$.
\begin{itemize}
\item[(i)]
If $D$  is $(\lambda,k)$-Riesz summable, then it is
  $(\lambda, \ell)$-Riesz summable for any $\ell \geq k$, and the associated limits coincide.
    \item[(ii)]
 If $D$  is $(e^\lambda,k)$-Riesz summable, then
  $D$  is $(\lambda,k)$-Riesz summable,  and the associated limits coincide.
    \end{itemize}
\end{Theo}

\smallskip

Let us come back to $\lambda$-Dirichlet series $D=\sum a_{n}e^ {-\lambda_{n}s}$. For  such series  and $k\ge 0$, the Dirichlet polynomials
\begin{equation*}\label{Rmean}
  R_{x}^{\lambda,k}(D)(s)=\sum_{\lambda_{n}<x}a_{n}e^{-\lambda_{n}s}(1-\frac{\lambda_{n}}{x})^{k},~~~ \,\,x \ge 0, s \in \mathbb{C}
\end{equation*}
are called  $(\lambda,k)$-Riesz means of $D$, and  $D$  is said to be  $(\lambda,k)$-Riesz summable at $s_{0}\in \C$, whenever the limit
\begin{equation*}\label{Rieszlimitintro}
\lim_{x\to \infty} R_{x}^{\lambda,k}(D)(s_{0})
\end{equation*}
exists. Clearly, by Theorem~\ref{basic},(ii)  we know that $D=\sum a_{n}e^ {-\lambda_{n}s}$ is $(e^\lambda,k)$-Riesz summable at $s_{0}\in \C$, if
\begin{equation*}\label{Rieszlimitintro}
\lim_{x\to \infty} T_{x}^{\lambda,k}(D)(s_{0})
\end{equation*}
exists.

\smallskip

 A~basic property states that the $(\lambda,k)$-Riesz summability  of $D$ at $s_{0}$ implies the $(\lambda,k+\varepsilon)$-Riesz summability of $D$ at $s_{0}$ for every $\varepsilon>0$ (see  Theorem~\ref{basic},(i)).
  As done for $k = 0$ above,  we define
\[
\sigma_c^{\lambda,k} (D) = \inf \big\{ \sigma \in \R  \colon D \,\,\,\text{$(\lambda,k)$-Riesz summable  in $\sigma$}\big\}\,,
\]
which characterizes  the largest possible open half-plane on which $D$ is pointwise $(\lambda,k)$-Riesz summable.
Similarly, we put
\[
\sigma_u^{\lambda,k} (D) = \inf \big\{ \sigma \in \R  \colon D \,\,\,\text{uniformly $(\lambda,k)$-Riesz summable  in $\sigma$}\big\}\,,
\]
defining  the largest possible open half-plane on which $D$ is uniformly $(\lambda,k)$-Riesz summable.

\smallskip

Let us  compare  summability with respect to Riesz means of first and second kind. Note first that, given a
$\lambda$-Dirichlet series $D$,  we by Theorem~\ref{basic},(ii)
 always have
$ \sigma_c^{\lambda,k} (D) \leq  \sigma_c^{e^\lambda,k} (D)$, where
$$\sigma_c^{e^\lambda,k}  (D)$$ now characterizes the   largest possible open half-plane on which  $D$ is  $(e^\lambda,k)$-Riesz summable.  Hardy and Riesz in \cite[Theorem~30]{HardyRiesz} even prove equality.

\smallskip
\begin{Theo} \label{niceproof?}
  For every $\lambda$-Dirichlet series $D$ and every $k > 0$
  \[
  \sigma_c^{\lambda,k} (D) = \sigma_c^{e^\lambda,k} (D)\,.
  \]
\end{Theo}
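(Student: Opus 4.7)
The plan is to prove the two inequalities $\sigma_c^{\lambda,k}(D) \le \sigma_c^{e^\lambda,k}(D)$ and $\sigma_c^{e^\lambda,k}(D) \le \sigma_c^{\lambda,k}(D)$ separately. Note first that when $k = 0$ the two families of Riesz means coincide after the reparametrization $x \leftrightsquigarrow e^x$ (both reduce to ordinary partial sums indexed by $\{\lambda_n\}$), so the conclusion is trivial. Thus we may assume $k > 0$. For the easy inequality, fix $\sigma > \sigma_c^{e^\lambda,k}(D)$. Then the scalar series $\sum a_n e^{-\lambda_n \sigma}$ is $(e^\lambda,k)$-Riesz summable, so Remark~\ref{basic}(ii) applied to this series produces $(\lambda,k)$-Riesz summability at $\sigma$. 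Taking the infimum over such $\sigma$ yields $\sigma_c^{\lambda,k}(D) \le \sigma_c^{e^\lambda,k}(D)$.

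For the reverse inequality, fix $s_0$ with $D$ being $(\lambda,k)$-Riesz summable at $s_0$, and any $s$ with $\re s > \re s_0$; the task is to show that $R_{e^x}^{e^\lambda,k}(D)(s) = T_x^{\lambda,k}(\sum a_n e^{-\lambda_n s})$ (cf.\ \eqref{secondkind}) has a limit as $x \to \infty$. After translating the coefficients we may assume $s_0 = 0$, so that the first-kind means
\[
A_y := R_y^{\lambda,k}(D)(0) = \sum_{\lambda_n < y} a_n \Big(1 - \frac{\lambda_n}{y}\Big)^k
\]
are uniformly bounded in $y > 0$ (and in fact convergent as $y \to \infty$). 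The strategy is then to mimic the proof of Theorem~\ref{corona2.0}: produce an Abel-type integral representation expressing $T_x^{\lambda,k}(D)(s)$ as a weighted integral of the family $(A_y)_{y>0}$ against a kernel depending on $x$ and $s$, and then pass to the limit $x \to \infty$ by dominated convergence, using the extra decay $\re s > 0$ to produce a dominating function independent of $x$.

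The technical heart of the argument is the integral-representation step. One cannot Abel-sum the tail $\sum a_n e^{-\lambda_n s} (1 - e^{\lambda_n - x})^k$ directly against the summatory function $S_t^\lambda(D)(0)$ in a way that exposes the first-kind means $A_y$, so one needs a two-step transformation: first write
\[
\Big(1 - e^{\lambda_n - x}\Big)^k = c_k \int_{\lambda_n}^{x} K(y, \lambda_n, x)\, \Big(1 - \frac{\lambda_n}{y}\Big)^{k}\, dy
\]
for an explicit nonnegative kernel $K$ obtained by differentiating $(1-e^{u-x})^k$ in an appropriate auxiliary variable, then interchange summation and integration to obtain
\[
T_x^{\lambda,k}(D)(s) = \int_0^x \Big(\sum_{\lambda_n < y} a_n e^{-\lambda_n s}\Big(1 - \tfrac{\lambda_n}{y}\Big)^k\Big)\, d\mu_{x,s}(y)
\]
for a signed measure $\mu_{x,s}$ whose total variation is controlled uniformly in $x$ for $\re s > 0$. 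Once this identity is in place, boundedness of $(A_y)$ together with $\re s > \re s_0 = 0$ provides a dominating function, and the limit $\lim_{x \to \infty} T_x^{\lambda,k}(D)(s)$ exists by dominated convergence. This gives $\sigma_c^{e^\lambda,k}(D) \le \re s_0$, hence (taking infimum over $s_0$) $\sigma_c^{e^\lambda,k}(D) \le \sigma_c^{\lambda,k}(D)$. The main obstacle is precisely the construction and estimation of the kernel $K$ and the verification that the resulting measure $\mu_{x,s}$ has uniformly bounded total variation; this is essentially the content hidden behind \cite[Theorem~30]{HardyRiesz} and requires careful bookkeeping of the endpoint behaviour of $(1 - e^{\lambda_n - x})^k$ near $\lambda_n = x$.
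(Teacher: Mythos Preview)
The paper does not actually prove this theorem: it is stated as Theorem~\ref{niceproof?} and explicitly introduced with ``we mention without proof the following important result from \cite[Theorem~30]{HardyRiesz}''. So there is no paper proof to compare against; the relevant question is whether your proposal stands on its own.

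Your easy inequality $\sigma_c^{\lambda,k}(D) \le \sigma_c^{e^\lambda,k}(D)$ is correct and is exactly Remark~\ref{basic}(ii), as you note. The reverse inequality, however, is not proved in your proposal: you outline an Abel-type strategy and postulate an identity
\[
\big(1 - e^{\lambda_n - x}\big)^k = c_k \int_{\lambda_n}^{x} K(y,\lambda_n,x)\,\Big(1 - \frac{\lambda_n}{y}\Big)^{k}\, dy,
\]
but you neither construct $K$ nor verify that the induced measures $\mu_{x,s}$ have uniformly bounded total variation. You yourself write that ``the main obstacle is precisely the construction and estimation of the kernel $K$ \ldots\ this is essentially the content hidden behind \cite[Theorem~30]{HardyRiesz}''. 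That sentence is an admission that the hard half of the argument has been deferred back to the very reference the paper cites without proof. In other words, the proposal is a plausible road map rather than a proof: the genuine analytical work (producing an explicit kernel, controlling its sign or total variation uniformly in $x$, and handling the endpoint $\lambda_n \uparrow x$ where $(1-e^{\lambda_n-x})^k$ degenerates) is not carried out.

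If you want to turn this into a complete argument you will have to supply that missing piece. One concrete route is to follow Hardy--Riesz and work with the auxiliary ``typical means'' to pass between $\lambda$ and $e^\lambda$; another is to use the integral formula for the limit function (Theorem~\ref{corona2}) together with a Perron-type inversion adapted to the frequency $e^\lambda$. Either way, the step you flagged as the ``main obstacle'' is not optional---it is the entire content of the hard direction.
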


 The proof of this result is only sketched in \cite{HardyRiesz}. As a by-product of what we intend to do, we in our appendix from Section~\ref{appendix} will give a full self contained proof.

\bigskip

\subsection{Integral representation}
The following analog of Theorem~\ref{corona2.0} rules the theory of Riesz summation of general Dirichlet series.
The result is taken from \cite[Theorem 23]{HardyRiesz}.
 In fact,  \eqref{laplace3} is an improvement of the  original form  from \cite{HardyRiesz}, which we repeat in Remark~\ref{ori!},(i).

\smallskip
Given $k\ge0$ and a frequency $\lambda$, the theory of $(\lambda,k)$-Riesz summabilty of $\lambda$-Dirichlet series $\sum a_n e^{-\lambda s}$ is ruled by the so-called summatory functions of order $k$, which for $s \in \C$ and $ x>0$ are defined  by
\begin{equation}\label{seed}
 S_{x}^{\lambda,k}(D)(s)=\sum_{\lambda_{n}<x} a_{n}e^{-\lambda_{n}s} (x-\lambda_{n})^{k}=x^{k} R_{x}^{\lambda,k}(D)(s).
\end{equation}

\begin{Theo}\label{corona2} Let $k\ge 0$, and  $D=\sum a_{n}e^{-\lambda_{n}s}\in \mathcal{D}(\lambda)$ be $(\lambda,k)$-summable at $s_{0}\in [\re\ge 0]$. Then $D$ is $(\lambda,k)$-summable on $[\re> \re~s_{0}]$ and the limit function
\begin{equation} \label{corona1new}
f\colon [\re>\re~s_{0}] \to \C, ~~ s\mapsto \lim_{x\to \infty} \sum_{\lambda_{n}<x} a_{n}(1-\frac{\lambda_{n}}{x})^{k}e^{-\lambda_{n}s},
\end{equation}
is holomorphic and satisfies for all $s\in [\re>\re~s_{0}]$
\begin{equation}\label{laplace3}
\Gamma(1+k) \frac{f(s)}{s^{1+k}}=\int_{0}^{\infty}e^{-s t}  S^{\lambda,k}_{t}(D)(0) dt,
\end{equation}
where as usual $\Gamma$ denotes the Gamma function.
Moreover, the convergence in \eqref{corona1new} is  uniform on each cone $|\arg (s-s_{0})|\le \gamma < \frac{\pi}{2}$.
\end{Theo}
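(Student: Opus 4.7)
My plan is to follow the three-step strategy of the proof of Theorem~\ref{corona2.0}, with $(\lambda,k)$-summability at $s_0$ replacing ordinary convergence. Specifically, I would establish: (i) a growth bound on the order-$k$ summatory function $t\mapsto S_t^{\lambda,k}(D)(0)$, generalizing Lemma~\ref{Estimate0}; (ii) an Abel-type identity at order $k$, generalizing Lemma~\ref{Abelfirstcase}, relating $S_x^{\lambda,k}(D)(s+s_0)$ to an integral of $t\mapsto S_t^{\lambda,k}(D)(s_0)$ against a Laplace-like kernel plus boundary terms; and (iii) passage to the limit $x\to\infty$ by dominated convergence.

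As a preliminary reduction, substituting $D\mapsto E=\sum(a_n e^{-\lambda_n s_0})e^{-\lambda_n s}$ (so that $R_x^{\lambda,k}(E)(s)=R_x^{\lambda,k}(D)(s+s_0)$ and $S_t^{\lambda,k}(E)(0)=S_t^{\lambda,k}(D)(s_0)$) reduces matters to $s_0=0$. From $R_x^{\lambda,k}(D)(0)\to L$ and $S_t^{\lambda,k}(D)(0)=t^k R_t^{\lambda,k}(D)(0)$ one immediately reads off
\[
|S_t^{\lambda,k}(D)(0)|\le C(1+t)^k,\qquad t\ge 0,
\]
which provides the integrable majorant $C(1+t)^k e^{-t\re s}$ on $[0,\infty)$ (valid for $\re s>0$) needed later, playing the role of \eqref{now} in the proof of Theorem~\ref{corona2.0}.

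For the Abel identity I would exploit that for integer $k\ge 1$ the function $t\mapsto S_t^{\lambda,k}(D)(0)$ is continuous and piecewise smooth with $\tfrac{d}{dt}S_t^{\lambda,k}(D)(0)=k S_t^{\lambda,k-1}(D)(0)$, which after integration by parts yields
\[
k\int_0^x e^{-ts}S_t^{\lambda,k-1}(D)(0)\,dt=e^{-xs}S_x^{\lambda,k}(D)(0)+s\int_0^x e^{-ts}S_t^{\lambda,k}(D)(0)\,dt.
\]
Iterating this recursion and combining it with a direct Fubini computation that expresses $\int_0^x e^{-ts}S_t^{\lambda,k}(D)(0)\,dt$ in terms of the incomplete Gamma function $\gamma(k+1,(x-\lambda_n)s)=\int_0^{(x-\lambda_n)s}e^{-v}v^k\,dv$, one can extract an identity relating $R_x^{\lambda,k}(D)(s)$ to the truncated Laplace transform of $S_t^{\lambda,k}(D)(0)$ plus a boundary contribution bounded by $e^{-x\re s}|S_x^{\lambda,k}(D)(0)|\to 0$. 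Together with the growth bound, dominated convergence then gives, simultaneously, the $(\lambda,k)$-summability of $D$ at every $s$ with $\re s>0$, the integral representation
\[
\Gamma(1+k)\frac{f(s)}{s^{1+k}}=\int_0^\infty e^{-st}S_t^{\lambda,k}(D)(0)\,dt,
\]
and, by differentiation under the integral sign, the holomorphy of $f$ on $[\re s>0]$. Uniform convergence on cones $|\arg(s-s_0)|\le\gamma<\pi/2$ follows because all constants depend only on $|s|/\re s\le\sec\gamma$, exactly as in the $k=0$ case.

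The main obstacle is identifying the Abel identity in the form whose principal term on the right is precisely $R_x^{\lambda,k}(D)(s)$ — rather than, say, the ordinary partial sum $\sum_{\lambda_n<x}a_n e^{-\lambda_n s}$, which need not converge in the $(\lambda,k)$-summable regime. The naive Fubini computation produces an incomplete-Gamma weight instead of the Riesz weight $(1-\lambda_n/x)^k$, so reconciling the two requires a careful combination of the recursion in $k$ with the boundary-term bookkeeping. For non-integer $k\ge 0$ one additionally has to interpret $S_t^{\lambda,k}(D)(0)$ as a Stieltjes integrator (continuous for $k>0$, of bounded variation for $k=0$), but once the correct identity is in place, the remainder of the argument is a mild generalization of the $k=0$ case.
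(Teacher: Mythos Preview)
Your overall plan (reduce to $s_0=0$, establish a growth bound on $S_t^{\lambda,k}(D)(0)$, find an Abel-type identity, pass to the limit by dominated convergence) matches the paper's, and your reduction and growth bound $|S_t^{\lambda,k}(D)(0)|\le C\,t^k$ are correct. The gap is step~(ii): you do not actually produce the Abel identity, and neither of the two routes you sketch leads to one.

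Your recursion $k\int_0^x e^{-ts}S_t^{\lambda,k-1}\,dt=e^{-xs}S_x^{\lambda,k}+s\int_0^x e^{-ts}S_t^{\lambda,k}\,dt$ moves \emph{down} in the order; iterating it produces expressions in $S_t^{\lambda,j}$ with $j<k$, for which you have no bound (only order-$k$ summability is assumed). Likewise the Fubini computation gives
\[
s^{k+1}\int_0^x e^{-st}S_t^{\lambda,k}(D)(0)\,dt=\sum_{\lambda_n<x}a_n e^{-\lambda_n s}\,\gamma(k+1,s(x-\lambda_n)),
\]
and the difference between the incomplete-Gamma weights and the Riesz weights $(1-\lambda_n/x)^k$ is not a single ``boundary term bounded by $e^{-x\re s}|S_x^{\lambda,k}(D)(0)|$''; controlling it uniformly is exactly the problem. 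You flag this as ``the main obstacle'' but do not resolve it.

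The paper's device goes in the \emph{opposite} direction: one first raises the order to the integer $m+1$ with $m<k\le m+1$ (Lemma~\ref{Abel}), and then lowers back to order $k$ via the Beta-function identity $S_t^{\lambda,m+1}=\text{const}\cdot\int_0^t S_y^{\lambda,k}(t-y)^{m-k}\,dy$ (Lemma~\ref{harrie}). This yields the estimate of Lemma~\ref{2}, in which \emph{every} error term involves only $|S_y^{\lambda,k}(D)(0)|$, so your growth bound applies. The limit of the principal term is identified by Lemma~\ref{heute} and Lemma~\ref{lemma-limit}. The cone-uniform convergence is not just ``constants depend on $|s|/\re s\le\sec\gamma$'': it needs the second inequality of Lemma~\ref{2} (with explicit $|\sec\gamma|^j$ factors and $y^{-j}$ in place of $|w|^j$) together with a splitting $\int_0^{x_0}+\int_{x_0}^x$ that exploits $R_y^{\lambda,k}(D)(0)\to 0$; your one-line claim does not cover this.
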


The proof, although  far more complex, is analog to that for the case $k=0$ stated in  Theorem~\ref{corona2.0}. A proof of the two cases
$k \in \mathbb{N}_0$ and $0<k<1$ is given in  \cite{HardyRiesz},
whereas  the proof of the general case is omitted. Very much inspired by the ideas used in  \cite{HardyRiesz}, we give a self contained proof with full details in our appendix from Section~\ref{appendix} which incorporates all different cases simultaneously.

\smallskip
For  the rest of this section we collect a   few crucial consequences of the preceding theorem.

\smallskip
\begin{Rema} \label{ori}
As above, we like to mention that the integral formula from  \eqref{laplace3} may be read as follows: The function $s\mapsto \Gamma(1+k) \frac{f(s)}{s^{1+k}}$ is the Laplace transform of $t\mapsto S^{\lambda,k}_{t}(D)(0)$, in short
$$\mathcal{L}\big(t\mapsto S^{\lambda,k}_{t}(D)(0)\big)(s)=\Gamma(1+k)\frac{f(s)}{s^{1+k}}.$$
The case $f=1$ of \eqref{laplace3} (assuming $\lambda_{1}=0$) reproves that
$$\mathcal{L}(t^{k})(s)=\frac{\Gamma(1+k)}{s^{1+k}}, ~~ s\in [\re>0].$$
\end{Rema}

\smallskip

The following remark collects  two useful reformulations of Theorem~\ref{corona2} in terms of the
abscissa of $(\lambda,k)$-Riesz summability of $D = \sum a_n e^{-\lambda_n s}$.
\smallskip

\begin{Rema} \label{ori!}
\label{together}
 Let $k\ge 0$, and  $D=\sum a_{n}e^{-\lambda_{n}s}\in \mathcal{D}(\lambda)$. Assume that $\sigma_{c}^{\lambda,k}(D)\in \R$, and let  $f:[\re > \sigma_{c}^{\lambda,k}(D)] \to \mathbb{C}$ be the limit function of $D$.
\smallskip
\begin{itemize}
 \item[(i)]
                           For every  $s_0\in [\re \ge \sigma_{c}^{\lambda,k}(D)]$ and $s\in [\re>0]$
              \begin{equation*}\label{laplace}
f(s+s_{0})\frac{\Gamma(1+k)}{s^{1+k}}=\int_{0}^{\infty}e^{-s t}  S^{\lambda,k}_{t}(D)(s_{0}) dt.
\end{equation*}
 \item[(ii)]
 For every  $s_0\in [\re > \sigma_{c}^{\lambda,k}(D)]$ and $s\in [\re>0]$
$$f(s+s_{0})=f(s_{0})+\frac{s^{1+k}}{\Gamma(1+k)} \int_{0}^{\infty} e^{-st} \big(S_{t}^{\lambda,k}(D)(s_{0})-f(s_{0})\big) dt.$$
\end{itemize}
             \end{Rema}

\begin{proof}
In order to prove $(i)$ assume first that $\re s_0 > \sigma_{c}^{\lambda,k}(D)$, so $D$ is $(\lambda,k)$-summable in $s_0$.
Clearly, we obtain $(i)$, if we apply  Theorem \ref{corona2} to the translation $D_{s_0}= \sum a_{n}e^{-\lambda_{n}s_0}e^{-\lambda_{n}s}$.
If $\re s_0 = \sigma_{c}^{\lambda,k}(D)$, then
take $s=\sigma+i\tau$ and $\varepsilon>0$ such that $2\varepsilon<\sigma$. Then we conclude from the first case  that
$$f(s+ s_0+\varepsilon)\frac{\Gamma(1+k)}{s^{1+k}}=\int_{0}^{\infty}e^{-s t}  S^{\lambda,k}_{t}(D)(s_0+\varepsilon) dt,$$
and hence by continuity of $f$ the claim follows once we prove that
\begin{equation*} \label{kerli}
\lim_{\varepsilon\to 0} \int_{0}^{\infty}e^{-s t}  S^{\lambda,k}_{t}(D)(s_0+\varepsilon) dt=\int_{0}^{\infty}e^{-s t}  S^{\lambda,k}_{t}(D)(s_0) dt.
\end{equation*}
Indeed, by (the forthcoming) Lemma \ref{alwaysabel} we have
\begin{equation*}
|R^{\lambda,k}_{t}(D)(s_0+\varepsilon)|\le e^{(\sigma-\varepsilon) t} |\sup_{0<y<t}
|R^{\lambda,k}_{y}(D)(s_0+\varepsilon + (\sigma-\varepsilon))|\le C(\sigma)e^{\frac{\sigma}{2} t}\,,
\end{equation*}
and  hence the dominated convergence theorem does the job. Finally, in order to see  $(ii)$, apply $(i)$ to the function $g(s)=f(s)-f(s_{0})$, assuming without loss of generality  that $\lambda_{1}=0$.
\end{proof}

\smallskip

\begin{Coro}
Let $k, D$ and $f$ be as in Theorem \ref{corona2}. Then
$$ \forall ~\varepsilon, \delta>0 \,\,\,\exists ~t_{0}>0  \,\,\, \forall~ |t|\ge t_{0},~
\sigma>\delta + \sigma_c^{\lambda,k}(D)\colon |f(\sigma +it)|\le \varepsilon |t|^{k+1}.$$
In particular, the function  $f$ for every  $\delta>0$ has finite uniform order $\leq k+1 $ on $[\re >  \delta + \sigma_c^{\lambda,k}(D)]$.
\end{Coro}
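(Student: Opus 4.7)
The plan is to apply the Laplace-style integral representation from Remark~\ref{ori!}(i) with a carefully chosen real base point $s_{0}$ lying just above the abscissa $\sigma_c^{\lambda,k}(D)$. First I would set $s_{0} := \sigma_c^{\lambda,k}(D) + \delta/2 \in \R$, so $s_0$ is strictly between $\sigma_c^{\lambda,k}(D)$ and $\delta + \sigma_c^{\lambda,k}(D)$. Since $D$ is $(\lambda,k)$-summable at $s_{0}$, the Riesz means $R_x^{\lambda,k}(D)(s_{0})$ converge as $x \to \infty$, and consequently the constant
\[
M := \sup_{x > 0} |R_x^{\lambda,k}(D)(s_{0})|
\]
is finite. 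Recalling from \eqref{seed} that $S_u^{\lambda,k}(D)(s_{0}) = u^{k}\,R_u^{\lambda,k}(D)(s_{0})$, this yields the uniform-in-$u$ bound $|S_u^{\lambda,k}(D)(s_{0})| \leq M\,u^{k}$.

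Next, fix $\sigma > \delta + \sigma_c^{\lambda,k}(D)$ and $t \in \R$, and decompose $\sigma + it = s + s_{0}$ with $s := (\sigma - s_{0}) + it$, so that $\re s = \sigma - s_{0} > \delta/2$ and $s \in [\re > 0]$. Remark~\ref{ori!}(i) gives
\[
f(\sigma + it) = \frac{s^{1+k}}{\Gamma(1+k)} \int_{0}^{\infty} e^{-su}\, S_{u}^{\lambda,k}(D)(s_{0})\, du.
\]
Passing to absolute values, inserting the bound from Step~1, and using $\int_{0}^{\infty} e^{-(\re s) u}\, u^{k}\, du = \Gamma(1+k)/(\re s)^{1+k}$, I obtain
\[
|f(\sigma + it)| \leq M\,\Bigl(\frac{|s|}{\re s}\Bigr)^{1+k} \leq M\,\Bigl(1 + \frac{2|t|}{\delta}\Bigr)^{1+k},
\]
the last inequality coming from $|s|/\re s = \sqrt{1 + (t/\re s)^{2}} \leq 1 + |t|/\re s \leq 1 + 2|t|/\delta$. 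For $|t|$ beyond a threshold $t_{0} = t_{0}(\varepsilon,\delta,k,M)$, the right-hand side is $\leq \varepsilon |t|^{1+k}$, giving the stated estimate. In particular the inequality $|f(\sigma+it)| \leq C_\delta |t|^{1+k}$ holds uniformly in $\sigma > \delta + \sigma_c^{\lambda,k}(D)$ for all $|t|$ large, which by the definition \eqref{finiteorderdefintroU} yields $\nu_{f} \leq k+1$ on $[\re > \delta + \sigma_c^{\lambda,k}(D)]$.

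The main obstacle is the choice of $s_{0} \in \R$ rather than one with non-trivial imaginary part. A real base point keeps $M$ a single constant independent of $t$. A tempting alternative, $s_{0} = \sigma_c^{\lambda,k}(D) + it$, would force $|s|/\re s = 1$ and apparently give a much sharper bound, but then $M$ would depend on $t$ via $\sup_{x}|R_{x}^{\lambda,k}(D)(\sigma_c^{\lambda,k}(D) + it)|$, and uniform summability on a vertical line at the abscissa of (merely pointwise) $(\lambda,k)$-Riesz summability is not at our disposal. Once $s_{0}$ is chosen real and the uniform bound on $|R_x^{\lambda,k}(D)(s_0)|$ is in hand, the remainder reduces to the elementary Laplace integral estimate carried out above.
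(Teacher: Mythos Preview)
Your argument establishes the weaker bound $|f(\sigma+it)| = O(|t|^{1+k})$ but not the full $o(|t|^{1+k})$ assertion. The failure is in the last step: from
\[
|f(\sigma+it)| \leq M\Bigl(1+\tfrac{2|t|}{\delta}\Bigr)^{1+k}
\]
you claim the right-hand side is $\le \varepsilon|t|^{1+k}$ for $|t|$ large. But
\[
\frac{M\bigl(1+2|t|/\delta\bigr)^{1+k}}{|t|^{1+k}} \longrightarrow M\Bigl(\tfrac{2}{\delta}\Bigr)^{1+k} > 0 \qquad (|t|\to\infty),
\]
so for $\varepsilon < M(2/\delta)^{1+k}$ no threshold $t_{0}$ exists. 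Thus you have proved the ``in particular'' clause (finite uniform order $\le k+1$), but not the main statement.

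The paper's proof recovers the missing $\varepsilon$ by splitting the Laplace integral at a cutoff $x_{0}$. The tail
$\int_{x_{0}}^{\infty} S_{y}^{\lambda,k}(D)(0)\,e^{-sy}\,dy$
contributes at most $\varepsilon|t|^{1+k}$ because $\int_{x_{0}}^{\infty} y^{k}e^{-\delta y}\,dy$ can be made arbitrarily small by choosing $x_{0}$ large. The finite piece $\int_{0}^{x_{0}}$ is handled by one integration by parts, which trades a factor $s$ for boundary and derivative terms that are bounded independently of $s$; this yields a contribution of order $|s|^{k}$ rather than $|s|^{k+1}$, hence $O(|t|^{k}) \le \varepsilon|t|^{k+1}$ for $|t|$ large. (The paper also first disposes of a cone $|\arg s|\le\gamma$, on which $f$ is bounded by the uniform convergence in Theorem~\ref{corona2}, and works outside it where $|s|\le |t|/\sin\gamma$.) The essential idea you are missing is precisely this splitting-plus-integration-by-parts, which converts the bounded-but-not-small constant $M(2/\delta)^{1+k}$ into an arbitrarily small one.
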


\smallskip
The result is taken from \cite[Theorem 38]{HardyRiesz}.

\smallskip

\begin{proof}
Without loss of generality  we assume that $D$ is $(\lambda,k)$-Riesz summable in~$0$. We  choose $\varepsilon, \delta>0$, an arbitrary  $0 < \gamma \leq \frac{\pi}{2}$, and put
\[
M = \sup_{x >0} | R^{\lambda,k}_{x}(D)(0)| <\infty.
\]
In a first step we  consider
$f$ on the complement of $[|\arg \bullet |\le \gamma]$ in $[\re >0]$ intersected with $[\re > \delta]$, i.e. all $s = \sigma + it \in [\re > \delta] $ such that $\frac{|s|}{|t|} \leq  \frac{1}{\sin \gamma}$.
Fix such $s$, and choose $x_0 >0$ such that
\[
\int_{x_0}^\infty y^k e^{-\delta y} dy < \frac{\Gamma(1+k) \,(\sin\gamma)^{1+k}}{M} \, \varepsilon.
\]
Applying again  Theorem~\ref{corona2}, we see that
\begin{align*}
f(s)
= \frac{s^{k+1}}{\Gamma(1+k)}   \int_{0}^{x_0}
&
S_{y}^{\lambda,k}(D)(0)
e^{- s y} dy\\&
+
 \frac{s^{k+1}}{\Gamma(1+k)}\int_{x_0}^{\infty} S_{y}^{\lambda,k}(D)(0)e^{- s y} dy
=
J_1(s) + J_2(s)\,.
  \end{align*}
  Then
  \[
  J_2(s) \leq |s|^{k+1} \frac{M}{\Gamma(1+k)} \int_{x_0}^\infty y^k e^{-\delta y} dy  \leq
 \varepsilon |t|^{k+1} \,.
  \]
  Moreover,  by partial integration we have
  \[
  J_1(s) = \frac{s^k}{\Gamma(1+k)} \Big(  - S_{x_0}^{\lambda,k}(D)(0) e^{-s x_0} +
  \int_{0}^{x_0}\big( S_{\bullet}^{\lambda,k}(D)(0)  \big)'(y)e^{-s y} dy\Big)\,,
  \]
  and then, since $|e^{-s y}| \leq 1$ for all $0 \leq y \leq x_0$, there is $C = C(x_0,k) > 0$ such that
  \[
  |J_1(s)| \leq |s|^k C \leq \frac{C}{ \sin \gamma} |t|^k \leq  \varepsilon |t|^{k+1}\,,
  \]
 whenever $\frac{C}{ \varepsilon\sin \gamma } \leq |t|$.  All in all this shows that $|f(s)| \leq \varepsilon |t|^{k+1}$
 for all $s = \sigma + it$  in the complement of $[|\arg \bullet |\le \gamma]$ in $[\re >0]$ intersected with $[\re > \delta]$  provided $\frac{C}{ \varepsilon\sin \gamma } \leq |t|$.
 Using again
 Theorem~\ref{corona2}, we know that $f$ is bounded on the cone $[|\arg \bullet |\le \gamma]$, which then
 clearly leads to the desired conclusion.
 \end{proof}

\bigskip

\subsection{Perron formula}

We prove Perron's formula -- an integral formula for the  summatory function of a $\lambda$-Dirichlet series
in terms of its limit function. The  result was first presented in  \cite[Theorem 39]{HardyRiesz}.  In fact, it turns out that it follows by Fourier inversion from
the integral formula in Theorem~\ref{corona2}. The original proof of Perron's formula for general Dirichlet series as given in
\cite{HardyRiesz} is mainly a consequence of  Cauchy's integral theorem. Our approach, deeply inspired by  ideas used in \cite{HelsonBook} to cover
the  case~$k=0$,  differs considerable.

\smallskip

\begin{Theo}
\label{Perron0}
For $k \geq 0$ let  $D=\sum a_{n} e^{-\lambda_{n}s}$ be somewhere $(\lambda,k)$-Riesz summable and
 $f: [ \re >\sigma_c^{\lambda,k}(D)] \to \mathbb{C}$ its limit function. Then
for each $x>0$ and $c >\max\{\sigma_{c}^{\lambda,k}(D),0\}$ we have
$$S_{x}^{\lambda,k}(D)(0)=\frac{\Gamma(1+k)}{2\pi i}  \int_{c-i\infty}^{c+i\infty} \frac{f(s)}{ s^{1+k}} e^{xs} ds.$$
\end{Theo}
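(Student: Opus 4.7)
The plan is to derive Perron's formula as the Fourier/Laplace inversion of the integral identity supplied by Theorem~\ref{corona2}. Parametrizing the vertical line of integration by $s = c + i\tau$ with $c > \max\{\sigma_c^{\lambda,k}(D),0\}$, that identity reads
\begin{equation*}
\Gamma(1+k)\,\frac{f(c+i\tau)}{(c+i\tau)^{1+k}}\,=\,\int_{0}^{\infty} e^{-(c+i\tau)t}\, S_{t}^{\lambda,k}(D)(0)\,dt,
\end{equation*}
which exhibits the left-hand side as the Fourier transform of the function
\begin{equation*}
g(t) \,:=\, e^{-ct}\, S_{t}^{\lambda,k}(D)(0)\, \chi_{[0,\infty)}(t).
\end{equation*}
Perron's formula is then precisely the assertion that $g$ at $t=x$ can be recovered from this Fourier transform, and the substitution $s = c+i\tau$ (with $ds = i\,d\tau$) converts the inverse Fourier integral into the claimed contour integral along $\re s = c$, with the factor $\frac{1}{2\pi i}$ produced by the Jacobian.

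Concretely, I would carry this out in three steps. First, verify that $g \in L^{1}(\mathbb{R})$: choosing some $c_{0}$ with $\sigma_{c}^{\lambda,k}(D) < c_{0} < c$, the Abel-summation-type estimate of Lemma~\ref{Estimate0}, in the general-$k$ version provided by the forthcoming Lemma~\ref{alwaysabel}, applied at the base point $c_{0}$ (where the Riesz means are bounded because $D$ is $(\lambda,k)$-Riesz summable there), gives $|S_{t}^{\lambda,k}(D)(0)| \le C\, t^{k} e^{c_{0} t}$, and hence $|g(t)| \le C\, t^{k} e^{(c_{0}-c)t}$, which is integrable. Second, recover $g(x)$ from $\widehat{g}$ by Fourier inversion. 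Third, multiply through by $e^{cx}$ and change variables to get the stated identity.

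The delicate point is step two, because $\widehat{g}$ is generally not in $L^{1}(\mathbb{R})$: the uniform order estimate $\nu_{f} \le k+1$ from the preceding corollary only guarantees that $|f(c+i\tau)/(c+i\tau)^{1+k}|$ is bounded, not decaying. I would overcome this via the Dirichlet--Jordan criterion applied to $g$: since $S_{t}^{\lambda,k}(D)(0)$ is on every bounded interval a finite linear combination of the locally BV building blocks $(t-\lambda_{n})_{+}^{k}$, the function $g$ has locally bounded variation, and it is continuous at $t = x$ whenever $k > 0$ (respectively whenever $x \notin \{\lambda_{n}\}$, which is automatic modulo a $k=0$ convention, when $k = 0$). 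Consequently the symmetric partial integrals $\frac{1}{2\pi}\int_{-T}^{T}\widehat{g}(\tau)e^{i\tau x}d\tau$ converge to $g(x)$ as $T \to \infty$. An alternative, perhaps closer in spirit to the approach of \cite{Helson} from which we take inspiration, is to first prove the identity on a vertical line $\re s = c'$ so far to the right that $D$ converges absolutely there; Fubini then allows termwise evaluation using the classical discontinuous factor $\frac{1}{2\pi i}\int_{c'-i\infty}^{c'+i\infty}\frac{e^{(x-\lambda_{n})s}}{s^{1+k}}\,ds = \frac{(x-\lambda_{n})_{+}^{k}}{\Gamma(1+k)}$, which reproduces $S_{x}^{\lambda,k}(D)(0)/\Gamma(1+k)$ on the nose; the contour is then shifted leftward to $\re s = c$ by Cauchy's theorem, the horizontal pieces being controlled by the same polynomial bound on $f/s^{1+k}$.
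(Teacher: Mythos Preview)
Your main approach is correct and is exactly the route the paper takes: write $\Gamma(1+k)\,f(c+i\tau)/(c+i\tau)^{1+k}$ as the Fourier transform of $g(t)=S_t^{\lambda,k}(D)(0)e^{-ct}\chi_{[0,\infty)}$ via Theorem~\ref{corona2}, and then invert pointwise. The paper justifies the inversion by observing that $g$ satisfies a local Lipschitz condition at every $t\notin\{\lambda_n\}$ and invoking the Dini-type criterion from \cite[p.~8]{Helson}; your Dirichlet--Jordan argument via local bounded variation is an equally valid variant of the same step. Your $L^1$ verification via Lemma~\ref{alwaysabel} matches exactly how the paper controls $S_t^{\lambda,k}(D)(0)$ elsewhere.

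One caution about your alternative route: for a general frequency $\lambda$ there is no guarantee that $D$ is absolutely convergent anywhere (the gap $\sigma_a-\sigma_c$ can be infinite), so moving to a line $\re s=c'$ of absolute convergence and applying Fubini termwise is not available in this generality. That alternative works for ordinary Dirichlet series but not here; stick with the Fourier-inversion argument.
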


\begin{proof} Let us denote by $\mathcal{F}_{L_{1}(\R)}$ the Fourier transform on $L_{1}(\R)$ (with respect to the ordinary Lebesgue measure). Then for all $y\in \R$ by Theorem \ref{corona2}
\begin{align*}
\frac{f(c + iy)}{ (c + iy)^{1+k}}
&
=\frac{1}{\Gamma(1+k)}\int_{0}^{\infty}
 \big[S_{t}^{\lambda,k}(D)(0)e^{-c t}\big]  e^{-iy t}  dt
 \\&
 = \frac{1}{\Gamma(1+k)}\mathcal{F}_{L_1(\mathbb{R})}\big( t\mapsto S_{t}^{\lambda,k}(D)(0) e^{-c t }  \big)(y) \,.
\end{align*}
Since the function $g(t)=S_{t}^{\lambda,k}(D)(0)e^{-ct}$ satisfies a Lipschitz condition at all $t\notin\{\lambda_{n} \mid n\in \mathbb{N} \}$, we by \cite[Corollary on p.9]{Helson}  for all such $t$ obtain
$$S_{t}^{\lambda,k}(D)(0)e^{-ct}=\lim_{A\to \infty} \frac{1}{2\pi} \int_{-A}^{A} \Gamma(1+k)\frac{f(c + iy)}{ (c + iy)^{1+k}} e^{iyt} dy.$$
Hence,
\begin{align*}
S_{t}^{\lambda,k}(D)(0)&=\frac{\Gamma(1+k)}{2\pi} \lim_{A\to \infty} \int_{-A}^{A} \frac{f(c + iy)}{ (c + iy)^{1+k}} e^{t(c+iy)} dy
\,,
\end{align*}
which is the equality we intended to prove.
\end{proof}

\smallskip

It is important for our later purposes that the preceding integral formula for the summatory function of a given $\lambda$-Dirichlet series $D$, can be extended considerably, whenever the limit function of $f$ extends holomorphically  to the full right half-plane -- still satisfying a
'finite order type-condition' (a result sketched in \cite[p.51]{HardyRiesz}).

\smallskip
\begin{Coro} \label{werdervsgladbach}
For $k\ge 0$ let $D=\sum a_{n}e^{-\lambda_{n}s}$ be $(\lambda,k)$-Riesz summable for some $s_{0}\in [\re>0]$. Assume that the limit function $f$ of $D$ extends holomorphically to $[\re>0]$ such that
\[
\forall \varepsilon, \delta>0 \,\,\,\exists t_{0}>0  \,\,\, \forall |t|\ge t_{0}, \sigma>\delta \colon |f(\sigma +it)|\le \varepsilon |\sigma+it|^{1+k}.
\]
Then
for each $x>0$ and $c >0$
$$S_{x}^{\lambda,k}(D)(0)= \frac{\Gamma(1+k)}{2\pi i} \int_{c-i\infty}^{c+i\infty}  \frac{f(s)}{s^{1+k}} e^{xs} ds.$$
\end{Coro}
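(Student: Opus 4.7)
The plan is to shift the contour in Theorem~\ref{Perron0} from an admissible abscissa $c_0 > \max\{\sigma_c^{\lambda,k}(D), 0\}$ down to the desired $c > 0$ via Cauchy's integral theorem. Since $f$ extends holomorphically to $[\re > 0]$ and the principal branch of $s^{-(1+k)}$ is holomorphic there, the integrand $g(s) := \frac{f(s)}{s^{1+k}} e^{xs}$ is holomorphic on the entire open right half-plane. In particular, for the rectangle $R_T$ with vertices $c \pm iT$ and $c_0 \pm iT$, which sits in $[\re > 0]$, Cauchy's theorem gives $\oint_{\partial R_T} g(s)\, ds = 0$.

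The crucial step is to control the two horizontal sides of $\partial R_T$ as $T \to \infty$. Given $\varepsilon > 0$, I would apply the finite-order-type hypothesis with $\delta := c/2$ and $\varepsilon' := \varepsilon / ((c_0 - c) e^{xc_0})$ to produce $t_0 > 0$ such that $|f(\sigma + it)| \le \varepsilon' |\sigma + it|^{1+k}$ whenever $|t| \ge t_0$ and $\sigma > c/2$. Since $[c, c_0] \subset (c/2, \infty)$, for $T \ge t_0$ and $\sigma \in [c, c_0]$ this gives $|g(\sigma \pm iT)| \le \varepsilon' e^{x\sigma}$, whence
\[
\Big| \int_c^{c_0} g(\sigma \pm iT)\, d\sigma \Big| \le \varepsilon' (c_0 - c) e^{xc_0} = \varepsilon\,,
\]
so both horizontal contributions tend to $0$ as $T \to \infty$.

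Combining these vanishing estimates with Cauchy's theorem yields
\[
\int_{c - iT}^{c + iT} g(s)\, ds = \int_{c_0 - iT}^{c_0 + iT} g(s)\, ds + o(1) \quad (T \to \infty)\,.
\]
By Theorem~\ref{Perron0} the right-hand side converges to $\frac{2\pi i}{\Gamma(1+k)} S_x^{\lambda,k}(D)(0)$; hence so does the left-hand side, and rearranging gives the desired Perron formula at abscissa $c$.

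I expect the main subtlety to be conceptual rather than computational: the symmetric improper integral at abscissa $c$ is not postulated to converge in advance, but its convergence drops out for free from the above identity, once the horizontal pieces have been shown to vanish and the vertical piece at $c_0$ to converge via Theorem~\ref{Perron0}. A minor care point is to select $\delta$ strictly smaller than $c$ (e.g. $\delta = c/2$) so that the bound from the hypothesis covers the closed strip $[c, c_0]$ uniformly in $\sigma$.
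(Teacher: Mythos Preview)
Your proof is correct and follows essentially the same approach as the paper: start from Theorem~\ref{Perron0} at an admissible abscissa, apply Cauchy's theorem on rectangles, and use the growth hypothesis to show the horizontal segments vanish. Your choice $\delta = c/2$ is in fact slightly cleaner than the paper's $\delta = c$, since it guarantees the bound on the closed segment $[c,c_0]$ without appealing to continuity at the endpoint.
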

\begin{proof}  By Theorem \ref{Perron0} we know that that for $x >0$ and $\sigma>\re s_0$
$$S_{x}^{\lambda,k}(D)(0)= \frac{\Gamma(1+k)}{2\pi i} \int_{\sigma-i\infty}^{\sigma+i\infty}  \frac{f(s)}{s^{1+k}} e^{xs} ds.$$
We claim that for each $x>0$ and $c >0$
\begin{equation} \label{cauchy}
\int_{c-i\infty}^{c+i\infty}  \frac{f(s)}{s^{1+k}} e^{xs} ds=\int_{\sigma-i\infty}^{\sigma+i\infty}  \frac{f(s)}{s^{1+k}} e^{xs} ds.
\end{equation}
Indeed, an application of Cauchy's integral theorem shows that \eqref{cauchy} follows, once we prove that
\begin{equation}\label{cauchyclaim}
\lim_{T\to \infty} \int_{c}^{\sigma}  \frac{f(y\pm iT)}{(y\pm iT)^{1+k}} e^{x(y \pm iT)} dy=0.
\end{equation}
To see this, choose, given  $\varepsilon>0$ and $\delta =  c$,  some  $t_{0} >0$ according to the assumption. Then for all $T>t_{0}$
\begin{align*}
\Big|\int_{c}^{\sigma}  \frac{f(y\pm iT)}{(y\pm iT)^{1+k}} e^{x(y \pm iT)} dy\Big|\le \varepsilon \int_{c}^{\sigma} e^{xy} dy=\varepsilon (e^{x\sigma}-e^{xc})x^{-1},
\end{align*}
which implies \eqref{cauchyclaim}.
\end{proof}

\smallskip

Finally, we show that the coefficients of a $\lambda$-Dirichlet series are uniquely determined by the values of its limit function on some abscissa - another crucial point for our coming purposes.

\smallskip

\begin{Coro}\label{normed} Let $D=\sum a_{n}e^{-\lambda_{n}s}$ be $(\lambda,k)$-Riesz summable at $s_{0}=\sigma_{0}+i\tau_{0}$ for some $k\ge 0$. If the limit function $f$ of $D$ vanishes on some vertical line $[\re=\sigma]$, where $\sigma> \sigma_{0}$, then $a_{n}=0$ for all $n$.
\end{Coro}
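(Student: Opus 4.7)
The plan is to combine the identity theorem for holomorphic functions with the Perron formula from Theorem~\ref{Perron0}, and then to read off the coefficients $a_n$ from the resulting summatory function one by one.

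First, by Theorem~\ref{corona2} the limit function $f$ is holomorphic on the open half-plane $[\re > \sigma_c^{\lambda,k}(D)]$, and by hypothesis the vertical line $[\re = \sigma]$ with $\sigma > \sigma_0 \geq \sigma_c^{\lambda,k}(D)$ lies inside this connected domain. Since such a line has plenty of accumulation points in $[\re > \sigma_c^{\lambda,k}(D)]$, the identity theorem upgrades the vanishing of $f$ on $[\re = \sigma]$ to $f \equiv 0$ on all of $[\re > \sigma_c^{\lambda,k}(D)]$.

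Next, I would apply Perron's formula (Theorem~\ref{Perron0}): for any $c > \max\{\sigma_c^{\lambda,k}(D), 0\}$ and any $x > 0$,
$$S_x^{\lambda,k}(D)(0) = \frac{\Gamma(1+k)}{2\pi i}\int_{c-i\infty}^{c+i\infty} \frac{f(s)}{s^{1+k}} e^{xs}\, ds = 0,$$
since the integrand is identically zero. Thus the summatory function $S_x^{\lambda,k}(D)(0) = \sum_{\lambda_n < x} a_n(x - \lambda_n)^k$ vanishes identically in $x > 0$.

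Finally, I would conclude by induction on $n$. For $x$ in the open interval $(\lambda_1, \lambda_2)$, the summatory function reduces to $S_x^{\lambda,k}(D)(0) = a_1(x-\lambda_1)^k$, which vanishes for all such $x$, forcing $a_1 = 0$ (regardless of whether $k=0$ or $k>0$). Assuming inductively that $a_1 = \cdots = a_{n-1} = 0$, the same argument on the slab $\lambda_n < x < \lambda_{n+1}$ collapses $S_x^{\lambda,k}(D)(0)$ to $a_n (x - \lambda_n)^k$, which again gives $a_n = 0$. I do not see any serious obstacle here; the only point requiring a moment of care is the use of the identity theorem to justify invoking Perron's formula with the globally vanishing $f$, but this is immediate once one notes that the line $[\re = \sigma]$ already lies in the full half-plane of convergence.
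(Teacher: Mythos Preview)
Your proof is correct and follows essentially the same route as the paper: apply Perron's formula (Theorem~\ref{Perron0}) to see that the summatory function $S_x^{\lambda,k}(D)(0)$ vanishes identically, and then peel off the coefficients $a_n$ one at a time by choosing $x$ in successive gaps $(\lambda_n,\lambda_{n+1})$. The only difference is cosmetic: the paper integrates directly along the line $[\re=\sigma]$ where $f$ is assumed to vanish, whereas you first invoke the identity theorem to propagate $f\equiv 0$ to the whole half-plane and then integrate along an arbitrary admissible abscissa $c$. Your detour through the identity theorem is harmless and arguably a shade more careful, since Theorem~\ref{Perron0} as stated requires $c>0$, which the hypothesis $\sigma>\sigma_0$ alone does not guarantee.
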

\begin{proof}
Let first $\lambda_{1}<x<\lambda_{2}$. Then by Theorem~\ref{Perron0} and our assumption
$$a_{1}e^{-\lambda_{1}s_{0}}(1-\frac{\lambda_{1}}{x})^{k}= S_x^{\lambda,k}(D)(0) = \frac{\Gamma(1+k)}{2\pi i}\int_{\sigma -i\infty}^{\sigma +  i\infty} \frac{f(s)e^{xs}}{s^{1+k}} ds=0,$$
which implies $a_{1}=0$. Proceeding successively by considering a sequence $(x_{n})$ with $\lambda_{n}<x_{n}<\lambda_{n+1}$, we obtain that $a_{n}=0$ for all $n$.
\end{proof}

\bigskip

\subsection{Bohr-Cahen formula}
The following formula for the abscissa of convergence  is again due to Hardy and Riesz \cite[Theorem~31]{HardyRiesz}.

\begin{Theo} \label{Bohr-Cahen-Riesz} Let $D=\sum a_{n}e^{-\lambda_{n}s}$ and $k\ge 0$. Then
$$\sigma_{c}^{\lambda,k}(D)\le \limsup_{x\to \infty} \frac{\log( |R_{x}^{\lambda,k}(D)(0)|)}{x},$$
with equality whenever $\sigma_{c}^{\lambda,k}(D)$ is non-negative.
\end{Theo}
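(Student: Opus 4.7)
The argument will parallel the proof of Corollary~\ref{Bohr-Cahen}, with the summatory functions $S_{x}^{\lambda,k}(D)$ of order $k$ (cf.~\eqref{seed}) playing the role of the classical $S_{x}^{\lambda}(D)$, and with Riesz-summation analogs of Lemma~\ref{Abelfirstcase} and Lemma~\ref{Estimate0} replacing their ordinary-summation counterparts. Indeed, the lemma cited by name as \emph{Lemma \ref{alwaysabel}} in the proof of Remark~\ref{ori!} already hints at exactly such an analog.

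\medskip

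\emph{Step 1 (upper bound).} Set $L:=\limsup_{x\to\infty}\log|R_{x}^{\lambda,k}(D)(0)|/x$. If $L=\infty$ there is nothing to prove, so fix any $\sigma_{0}>L$. By definition of the limsup there is $C>0$ with
$$|S_{x}^{\lambda,k}(D)(0)|\;=\;x^{k}\,|R_{x}^{\lambda,k}(D)(0)|\;\le\;C\,x^{k}e^{\sigma_{0}x},\qquad x>0.$$
Fix $s$ with $\re s>\sigma_{0}$. I would then imitate the proof of Theorem~\ref{corona2.0}: an Abel-type identity for Riesz means expresses $S_{x}^{\lambda,k}(D)(s)$ as a combination of $S_{x}^{\lambda,k}(D)(0)e^{-sx}$ and an integral $\int_{0}^{x}S_{t}^{\lambda,k}(D)(0)\,\varphi_{k}(t,s,x)\,dt$, where $\varphi_{k}$ is an explicit kernel built from $e^{-st}$. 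The boundary term tends to $0$ as $x\to\infty$ since $\re s>\sigma_{0}$, while the growth bound above majorizes the integrand by $C\,t^{k}e^{-(\re s-\sigma_{0})t}$, which is integrable on $[0,\infty)$. The dominated convergence theorem then gives convergence of $R_{x}^{\lambda,k}(D)(s)=x^{-k}S_{x}^{\lambda,k}(D)(s)$ as $x\to\infty$, so $\sigma_{c}^{\lambda,k}(D)\le\sigma_{0}$. Letting $\sigma_{0}\downarrow L$ finishes this step.

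\medskip

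\emph{Step 2 (equality when $\sigma_{c}^{\lambda,k}(D)\ge 0$).} Fix $\varepsilon>0$ and put $\sigma_{0}=\sigma_{c}^{\lambda,k}(D)+\varepsilon>0$. Since $D$ is $(\lambda,k)$-Riesz summable at $\sigma_{0}$, the family $\{R_{x}^{\lambda,k}(D)(\sigma_{0})\}_{x>0}$ is bounded, say by $M$. Applying the Riesz-means analog of Lemma~\ref{Estimate0} in the form
$$|R_{x}^{\lambda,k}(D)(0)|\;\le\;C(\sigma_{0})\,e^{\sigma_{0}x}\,\sup_{y\le x}|R_{y}^{\lambda,k}(D)(\sigma_{0})|,$$
obtained by translating the Abel identity back to $s=0$ (this is essentially the estimate used in the proof of Remark~\ref{ori!}), one deduces $|R_{x}^{\lambda,k}(D)(0)|\le C(\sigma_{0})Me^{\sigma_{0}x}$ for all $x>0$. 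Taking logs, dividing by $x$, and passing to the limsup yields $L\le\sigma_{0}=\sigma_{c}^{\lambda,k}(D)+\varepsilon$; letting $\varepsilon\to 0$ completes the proof.

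\medskip

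\emph{Main obstacle.} The whole argument is a direct mimicry of Corollary~\ref{Bohr-Cahen} \emph{provided} one has the Abel-type identity and the translate-and-bound estimate for $S_{x}^{\lambda,k}$ at arbitrary real order $k\ge 0$. For $k\in\mathbb{N}_{0}$ these follow by repeated (Stieltjes) integration by parts; for fractional $k$ they require the more delicate fractional-integration calculus that the authors develop for the proof of Theorem~\ref{corona2} in Section~\ref{summatory}. Accepting those tools, the proof is routine, and the hypothesis $\sigma_{c}^{\lambda,k}(D)\ge 0$ enters only to guarantee that the auxiliary abscissa $\sigma_{0}$ in Step~2 is positive, which is exactly the range in which the Laplace-transform bookkeeping is at our disposal.
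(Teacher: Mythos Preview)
Your proposal is correct and follows essentially the same route as the paper. The paper's proof is equally terse: for Step~1 it simply observes that the bound $|R_{x}^{\lambda,k}(D)(0)|\le Ce^{\sigma_{0}x}$ is exactly hypothesis~\eqref{lisa} in the proof of Theorem~\ref{corona2}, so that proof (which indeed rests on the Abel-type identity of Lemma~\ref{Abel} and the fractional-integration Lemmas~\ref{harrie}--\ref{lemma-limit} you allude to) carries over verbatim to yield $(\lambda,k)$-summability on $[\re>\sigma_{0}]$; for Step~2 it invokes Lemma~\ref{alwaysabel}, which is precisely the ``Riesz-means analog of Lemma~\ref{Estimate0}'' you describe.
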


\smallskip

The proof will be given in  Section~\ref{appendix}, mainly as a consequence of  Theorem \ref{corona2}.
Note  that $$\limsup_{x\to \infty} \frac{\log( |R_{x}^{\lambda,k}(D)(0)|)}{x}=\limsup_{x\to \infty} \frac{\log( |S_{x}^{\lambda,k}(D)(0)|)}{x}.$$

Recall that  Theorem \ref{Bohr-Cahen-Riesz} for the special case $k=0$ was already presented in Corollary~\ref{Bohr-Cahen}, and that we in  Corollary~\ref{Bohr-Cahen-uniform} formulated  a uniform counterpart.

Again a simple analysis shows that the  proof of Theorem \ref{Bohr-Cahen-Riesz} also works for vector-valued Dirichlet series.
Hence we exactly as in Section~\ref{summationsection0} obtain the following uniform variant of  Theorem~\ref{Bohr-Cahen-Riesz}
(note that for $0<k<1$ a direct, but technically more involved, argument was given in  \cite[Lemma 3.8]{schoolmann2018bohr}).

\smallskip

\begin{Coro} \label{Bohr-Cahen-Riesz-uniform} Let $D=\sum a_{n}e^{-\lambda_{n}s}$ and $k\ge 0$. Then
$$\sigma_{u}^{\lambda,k}(D)\le \limsup_{x\to \infty} \frac{\log( \sup_{t\in \R}|R_{x}^{\lambda,k}(D)(it)|)}{x},$$
with equality whenever $\sigma_{u}^{\lambda,k}(D)$ is non-negative.
\end{Coro}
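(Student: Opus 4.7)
The plan is to reduce the uniform statement to the scalar result in Theorem~\ref{Bohr-Cahen-Riesz} by moving to a vector-valued Dirichlet series, exactly along the lines of the proof of Corollary~\ref{Bohr-Cahen-uniform}. First I would let $X = H_\infty[\re > 0]$ equipped with the sup norm, and introduce the $X$-valued $\lambda$-Dirichlet series
\[
E = \sum x_n\, e^{-\lambda_n w}, \qquad x_n := a_n e^{-\lambda_n \,\bullet\,} \in X,
\]
regarded as a Dirichlet series in the variable $w$. The central observation is that the $X$-valued partial Riesz means of $E$ at $w=0$ are exactly the scalar Riesz means of $D$ viewed as functions of $s \in [\re > 0]$:
\[
R_x^{\lambda,k}(E)(0) \,=\, \Big(s \mapsto R_x^{\lambda,k}(D)(s)\Big) \in X.
\]
By the Hahn--Banach theorem together with the maximum modulus principle, its $X$-norm equals
\[
\|R_x^{\lambda,k}(E)(0)\|_X \,=\, \sup_{\re s > 0}|R_x^{\lambda,k}(D)(s)| \,=\, \sup_{t\in\R}|R_x^{\lambda,k}(D)(it)|,
\]
the second equality again following from the maximum modulus principle applied to the Dirichlet polynomial.

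Next I would verify the identification of abscissas $\sigma_u^{\lambda,k}(D) = \sigma_c^{\lambda,k}(E)$. Indeed, $(\lambda,k)$-Riesz summability of $E$ at a point $w_0 \in \R$ in the Banach space $X$ is, by definition of the norm on $X$, precisely uniform $(\lambda,k)$-Riesz summability of the translate $D_{w_0}$ on $[\re > 0]$, which in turn means uniform $(\lambda,k)$-Riesz summability of $D$ on $[\re > w_0]$; taking the infimum over such $w_0$ yields the claim.

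Once these two identifications are in place, the result follows by invoking a vector-valued version of Theorem~\ref{Bohr-Cahen-Riesz}. Here the main (and really the only) technical point is the assertion that Theorem~\ref{Bohr-Cahen-Riesz}, together with the integral representation Theorem~\ref{corona2} it relies on, admits a valid extension to Dirichlet series with coefficients in a Banach space. This is routine: none of the arguments in Theorems~\ref{corona2.00}, \ref{corona2}, and \ref{Bohr-Cahen-Riesz} uses anything beyond completeness of the coefficient space, summation by parts, the dominated convergence theorem for Bochner integrals, and estimates on summatory functions --- all of which transfer verbatim from the scalar to the vector-valued setting. Granting this, applying the vector-valued version of Theorem~\ref{Bohr-Cahen-Riesz} to $E$ gives
\[
\sigma_u^{\lambda,k}(D) = \sigma_c^{\lambda,k}(E) \,\le\, \limsup_{x\to\infty} \frac{\log \|R_x^{\lambda,k}(E)(0)\|_X}{x} = \limsup_{x\to\infty} \frac{\log\bigl(\sup_{t\in\R}|R_x^{\lambda,k}(D)(it)|\bigr)}{x},
\]
with equality when $\sigma_c^{\lambda,k}(E) = \sigma_u^{\lambda,k}(D) \ge 0$. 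The expected obstacle is purely bookkeeping: writing out the vector-valued versions of Theorems~\ref{corona2.00}, \ref{corona2.0} and~\ref{corona2} carefully enough to be confident that the scalar proofs apply without modification; no genuinely new analytic ingredient is needed.
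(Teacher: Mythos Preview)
Your proposal is correct and follows essentially the same approach as the paper: the paper also derives Corollary~\ref{Bohr-Cahen-Riesz-uniform} by observing that the proof of Theorem~\ref{Bohr-Cahen-Riesz} goes through verbatim for Banach-space-valued Dirichlet series and then applying it to $E=\sum (a_n e^{-\lambda_n s})e^{-\lambda_n w}$ with coefficients in $H_\infty[\re>0]$, exactly as was done for the case $k=0$ in Corollary~\ref{Bohr-Cahen-uniform}. Your identification of $\sigma_u^{\lambda,k}(D)=\sigma_c^{\lambda,k}(E)$ and the norm computation via Hahn--Banach and the maximum modulus principle match the paper's argument.
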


\bigskip
\section{\bf A new scale of Banach spaces} \label{structuresection}

We start  repeating  some of our basic definitions from the introduction.
Given a frequency $\lambda = (\lambda_n)$, we call a $\lambda$-Dirichlet series $D=\sum a_{n}e^{-\lambda_{n}s}$ a $\lambda$-Riesz germ of the holomorphic function
$f:~[\re > 0] \to \mathbb{C}$,
whenever $\sigma_{c}^{\lambda,m}(D)<\infty$ for some $m \geq 0$ and $f$ is the holomorphic extension
of the limit function of $D$ to all of $[\re >0]$.

\smallskip

Two remarks are in order.

\begin{Rema} \label{assign}
Let $f:~[\re > 0] \to \mathbb{C}$ be a holomorphic function generated by  the  $\lambda$-Riesz germ
  $D=\sum a_{n}e^{-\lambda_{n}s}$.
Then $D$ by Corollary~\ref{normed} is unique, and as a consequence we may assign to every such  $f$ the  unique sequence
$(a_n(f))_{n} = (a_n)_n$,  which we (like for Dirichlet series) again call  the 'sequence of Bohr coefficients of~$f$'.
          \end{Rema}

\smallskip

\begin{Rema}
\label{elambda}
Let $f:~[\re > 0] \to \mathbb{C}$ be holomorphic. Then $f$  has a $\lambda$-Riesz germ if and only if $f$ on some half-plane and for  some $m$ equals the limit function of some $\lambda$-Dirichlet series under $(e^\lambda,m)$-Riesz summation.
This is an immediate consequence of Theorem~\ref{niceproof?}.
        \end{Rema}

  \smallskip

  Moreover, given a holomorphic function  $f:~[\re > 0] \to \mathbb{C}$  generated by  the  $\lambda$-Riesz germ
  $D$, the $x$th Riesz mean of order $k \geq 0$
  of $f $ in $s \in \mathbb{C}$ is given by
  \[
  R_x^{\lambda,k}(f)(s) =\sum_{\lambda < x} a_n(f) e^{-\lambda_n s} \Big(  1- \frac{\lambda_n}{x}  \Big)^k\,,
  \]
and its  summatory function  in $s \in \mathbb{C}$ by
\[
  S_x^{\lambda,k}(f)(s) =\sum_{\lambda < x} a_n(f) e^{-\lambda_n s} (  x-  \lambda_n)^k\,,\,\, x >0\,.
  \]
As explained above the spaces $$H_{\infty,\ell}^\lambda[\re~>~0]\,, \,\,\,\, \ell \ge0,$$ accumulate  all
holomorphic functions $f:~[\re > 0] \to \mathbb{C}$, which are generated by a $\lambda$-Riesz germ and satisfy
the growth condition
\begin{equation}\label{norway}
  \|f\|_{\infty,\ell} = \sup_{\re s > 0}
\frac{|f(s)|}{(1+|s|)^\ell}< \infty\,.
\end{equation}
For each such $f$
\begin{equation*} \label{trivial}
\sup_{\re s >0} \frac{|f(s)|}{(1+|s|)^\ell}
\leq
 \sup_{\re s >0}\frac{|f(s)|}{|(1+s)^\ell|}
\leq
2^{\ell} \sup_{\re s >0} \frac{|f(s)|}{(1+|s|)^\ell}\,,
\end{equation*}
since $|1+s| \leq 1 + |s|  \leq   2|1+s|$ for all $s \in [\re  >0] $.
Obviously, the pairs $$(H_{\infty,\ell}^\lambda[\re >0], \|\cdot\|_{\infty,\ell})$$ form an increasing scale
of normed spaces. That all these spaces in fact are complete, so Banach spaces, is a non-trivial fact which is given in Theorem~\ref{completeness}.

\smallskip

Finally, we remark that a holomorphic function satisfying the growth condition \eqref{norway}, by Remark~\ref{elambda}
(see also \eqref{secondkind}) belongs to  $H_{\infty,\ell}^{\lambda}[\re>0]$ if and only
if there is a $\lambda$-Dirichlet series $D=\sum a_{n}e^{-\lambda_{n}s}$ such that for some $m,\sigma\ge 0$ and for every $s \in [\re > \sigma]$
\begin{equation} \label{alternativedefintro}
f(s)=\lim_{x\to \infty} R_{x}^{e^{\lambda},m}(D)
=\lim_{x\to \infty} T_{x}^{\lambda,m}(D)
=
\lim_{x\to \infty}\sum_{\lambda_{n}<x} a_{n}e^{-\lambda_{n}s}\Big(1- \frac{e^{\lambda_n}}{e^x} \Big)^m.
\end{equation}

\bigskip

\subsection{Ordinary and power case}

We start considering  the power case $\lambda = (n)$. Then, as already announced in the introduction, the scale $H_{\infty,\ell}^{(n)}[\re>0]\,,\, \ell \ge 0,$ collapses to the case $\ell =0 $.

\begin{Prop} \label{ex2} For every $\ell \ge 0$
 \[
 H_{\infty}^{(n)}[\re>0] = H_{\infty,\ell}^{(n)}[\re>0]\,.
 \]
\end{Prop}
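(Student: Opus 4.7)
One direction is immediate: the scale is monotone in $\ell$ and $H_\infty^{(n)}[\re > 0] = H_{\infty,0}^{(n)}[\re > 0]$ by the forthcoming Corollary~\ref{oldcase}, so $H_\infty^{(n)}[\re > 0] \subseteq H_{\infty,\ell}^{(n)}[\re > 0]$. I would prove the reverse inclusion by invoking the classical identification $z = e^{-s}$ that turns $(n)$-Dirichlet series into power series and sends the right half-plane onto the punctured unit disk.

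Given $f \in H_{\infty,\ell}^{(n)}[\re > 0]$ with $(n)$-Riesz germ $D = \sum a_n e^{-ns}$, the first step is to observe that each Riesz mean $R_x^{(n),m}(D)(s)$ is $2\pi i$-periodic in $s$; hence $f$ is $2\pi i$-periodic on the half-plane where it equals their pointwise limit, and the identity theorem propagates this periodicity to all of $[\re > 0]$. Thus $g(z) := f(-\log z)$ is a well-defined holomorphic function on $\D \setminus \{0\}$, with $\log$ the principal branch, and the elementary estimate $|{-\log z}| \le |\log|z|| + \pi$ converts the growth condition $|f(s)| \le \|f\|_{\infty,\ell}(1+|s|)^\ell$ into
\begin{equation*}
|g(z)| \le \|f\|_{\infty,\ell}\bigl(1 + |\log|z|| + \pi\bigr)^\ell, \quad z \in \D \setminus \{0\}.
\end{equation*}
From here I would draw two conclusions: first, $|z \, g(z)| \to 0$ as $z \to 0$, so by Riemann's removable singularity theorem $g$ extends holomorphically to all of $\D$; second, as $|z| \to 1^-$ the same bound yields $|g(z)| \le \|f\|_{\infty,\ell}(1+\pi)^\ell$, and combining this with the boundedness of $g$ on every compact subset of $\D$ gives $g \in H_\infty(\D)$.

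The final step is to Taylor-expand $g(z) = \sum_{n \ge 0} b_n z^n$, which furnishes $f(s) = \sum_{n \ge 0} b_n e^{-ns}$ as an ordinarily (hence $(n,0)$-Riesz) summable Dirichlet series representation of $f$ on $[\re > 0]$; the uniqueness of Riesz germs from Corollary~\ref{normed} then forces $b_n = a_n$. Since $f$ is bounded and $2\pi i$-periodic on $[\re > 0]$, its restriction to each vertical line is continuous and periodic, hence uniformly almost periodic with Bohr coefficients supported on $\{n\}$, placing $f$ in $H_\infty^{(n)}[\re > 0]$. I expect the main obstacle to be the removable singularity step, and its success hinges on a specific feature of the frequency $(n)$: the polynomial growth in $|s|$ transforms into the much milder logarithmic growth $(1+|\log|z||)^\ell$ near $z = 0$, and this is exactly what lets the scale collapse; for frequencies like $(\log n)$ there is no such disk picture and the analogous collapse genuinely fails.
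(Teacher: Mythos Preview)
Your proof is correct and takes a genuinely different route from the paper's. The paper argues entirely within the Riesz machinery: it invokes Corollary~\ref{Theo41reproved} together with Theorem~\ref{niceproof?} to see that the germ $D$ is $((e^n),\ell+1)$-Riesz summable on $[\re>0]$, then applies Remark~\ref{basic}\,(iii) (using the uniform gap $e^{n+1}-e^n$) to upgrade this to ordinary convergence of $D$; boundedness of $f$ then follows from absolute convergence on $[\re>1]$ and $2\pi i$-periodicity on the remaining strip. Your argument instead passes through the disk model $z=e^{-s}$ and appeals only to the removable singularity theorem and elementary growth estimates, bypassing Theorem~\ref{niceproof?}, Remark~\ref{basic}, and Corollary~\ref{Theo41reproved} altogether. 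What your approach buys is a self-contained explanation of \emph{why} the scale collapses---polynomial growth in $|s|$ becomes logarithmic growth near $z=0$, which any holomorphic function absorbs---whereas the paper's argument illustrates how the general Riesz theory specialises. One minor remark: your final paragraph is slightly roundabout, since once you have shown that $f$ is bounded on $[\re>0]$ and already possesses a $\lambda$-Riesz germ, membership in $H_{\infty,0}^{(n)}[\re>0]=H_\infty^{(n)}[\re>0]$ is immediate from Corollary~\ref{oldcase}; the separate identification $b_n=a_n$ and the almost-periodicity check are not needed.
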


\begin{proof}
  Take $f \in H_{\infty,\ell}^{(n)}[\re>0]$,  let $D = \sum a_n e^{-ns}$ be its associated Dirichlet series, and
 choose any $m \in \mathbb{N}$ such that $\ell < m$. Then we deduce from  Corollary~\ref{Theo41reproved} (proved below) and Theorem~\ref{niceproof?} (for which we give a complete proof in  in our appendix from
    Section~\ref{appendix}) that $D$ is $((e^n),m)$-Riesz summmable on $[\re >0]$ with limit $f$. Then by \cite[Theorem 21]{HardyRiesz}
\begin{equation*} \label{fromfirsttopartialsum}
\lim_{N\to \infty} \Big(\frac{e^{N+1}-e^{N}}{e^{N+1}}\Big)^{m} \Big(\sum_{n=1}^{N}a_n- \sum_{n=1}^{\infty}a_n\Big)=0\,,
\end{equation*}
    i.e.
  $D$  on $[\re >0]$ converges pointwise to $f$.

  We remark that \cite[Theorem 21]{HardyRiesz} is a result on arbitray frequencies $\lambda$ and arbitrary orders $k >0$, and we here only use this result for the integer case $k=m\in \mathbb{N}$, which according to  \cite[p.36]{HardyRiesz} has an 'extremely simple' proof.

  Continuing  the proof, we see that  $D$ converges uniformly on $[\re > \varepsilon]$ for every  $\varepsilon >0$ (indeed, for the frequency $\lambda=(n)$ the abscissas of convergence and absolutely convergence coincide -- see \cite[12, §3, Hilfssatz 2,3]{Bohr2}).
  So  in particular $f$ is bounded on $[\re > 1]$. Moreover, $f(\sigma + i \pmb{\cdot}): \R \to \C$ is $2\pi$-periodic for every $\sigma >0$,
  which  implies that
  \begin{align*}
    \sup_{\substack{0 < \sigma < 2\\ t \in \R}} |f(\sigma + it)| &
        =\sup_{\substack{0 < \sigma < 2\\ t \in [0,2\pi]}} |f(\sigma + it)|
        \\&
    \leq \|f\|_{\infty,\ell} \sup_{\substack{0 < \sigma < 2\\ t \in [0,2\pi]}} (1 + |\sigma + it|)^\ell
           \leq \|f\|_{\infty,\ell}  (1 + \sqrt{4 + 4\pi^2})^\ell\,.
      \end{align*}
      All in all we conclude that $f \in H_{\infty}^{(n)}[\re>0]$\,.
        \end{proof}

\smallskip

Next we  consider the frequency $\lambda=(\log n)$, which generates ordinary Dirichlet series $\sum a_n n^{-s}$.
Looking at  \eqref{alternativedefintro} (with $x=\log y$), we see that a holomorphic function $f\colon [\re>0] \to \C$ belongs to the Banach space $\mathcal{H}^{(\log n)}_{\infty,1}[\re>0]$ if only if the growth condition \eqref{norway} holds,
 and there exists an ordinary Dirichlet series $D=\sum a_{n} n^{-s}$ such that for some $m, \sigma\ge 0$
$$f(s)=\lim_{y\to \infty} \sum_{n<y} a_{n}n^{-s} \Big(1-\frac{n}{y}\Big)^{m}, ~~ s\in [\re>\sigma].$$
Actually, we will see in Corollary~\ref{41improved} that  Theorem \ref{Theo41} in the special case $\lambda=(\log n)$ is valid for $k=\ell$, and therefore in the particular case $\ell=1$
$$f(s)=\lim_{N\to \infty} \frac{1}{N} \sum_{n=1}^{N} \sum_{k=1}^{n} a_{k}k^{-s}, ~~ s\in [\re>0],$$
so that eventually we arrive at the following result.

\begin{Prop} \label{ex1}Let $f:[\re >0] \to \C$ be holomorphic. Then the growth condition \eqref{norway} holds
 and there exists an  ordinary Dirichlet series which on $[\re >0]$ is  Ces\`{a}ro summable with limit $f,$ if and only if $f \in H_{\infty,1}^{(\log n)}[\re >0]$.
\end{Prop}

Note that the space $H^{(\log n)}_{\infty,\ell}[\re>0]$ is strictly larger than $H^{(\log n)}_{\infty}[\re>0]$. To see an example, we again
look at the eta-function $\eta$ (defined in the introduction), for which
\begin{equation*} \label{ordereta}
\ell > \dfrac{1}{2}
\,\,\,\,\, \Rightarrow \,\,\,\,\,
\eta \in H^{(\log n)}_{\infty, \ell}[\re > 0]
\,\,\,\,\, \Rightarrow \,\,\,\,\,
\ell \ge \dfrac{1}{2}\,.
\end{equation*}
In particular, $\eta\notin H^{(\log n)}_{\infty}[\re > 0]$. Alternatively, this may also be seen as a consequence of   Bohr's inequality (see \cite[Corollary 4.3]{defant2018Dirichlet} or \cite[Theorem 4.4.1]{queffelec2013diophantine}): Assuming $\eta \in \mathcal{H}_{\infty}((\log n))$ implies that
$N= \sum_{n=1}^{N} |(-1)^{p_{n}}| \le \|D\|_{\infty}<\infty$ for all~$N$,
which clearly is a contradiction.

This example in particular  shows that, although we in general have
\[
\mathcal{H}^{\lambda}_{\infty,\ell_{1}}[\re>0]\subset \mathcal{H}^{\lambda}_{\infty,\ell_{2}}[\re>0]
\,\,\, \text{ for all $0\le \ell_{1}<\ell_{2}$ }\,,
\]
this  inclusion for certain $\lambda$'s may be strict  whereas for other $\lambda$'s it actually may be an equality (of sets).

\bigskip

\subsection{Perron formula -- a variant}

We now  for the new scale of Banach spaces prove an important  variant of Perron's formula from Theorem \ref{Perron0}.

\begin{Theo} \label{new-ban}
 Let  $f \in H^{\lambda}_{\infty,\ell}[\re >0]$ and $k >\ell\ge 0$.
 Then for all $s_{0}\in [\re\ge0]$, $x>0$ and $c >0$
 \begin{equation*}
 S_{x}^{\lambda,k}(f)(s_{0}) = \frac{\Gamma(1+k)}{2\pi i} \int_{c-i\infty}^{c+i\infty} \frac{f(s+s_{0})}{s^{1+k}} e^{xs} ds.
 \end{equation*}
\end{Theo}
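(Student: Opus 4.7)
The plan is to reduce to the $s_0=0$ case by translating and then invoke the already proven variant of Perron's formula in Corollary~\ref{werdervsgladbach}, once its hypotheses are verified for the translated data.

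More precisely, let $D=\sum a_n e^{-\lambda_n s}$ be the (unique) $\lambda$-Riesz germ of $f$, and set $D_{s_0}=\sum a_n e^{-\lambda_n s_0} e^{-\lambda_n s}$. Its summatory function satisfies
\[
S_x^{\lambda,k}(D_{s_0})(0)=\sum_{\lambda_n<x} a_n e^{-\lambda_n s_0}(x-\lambda_n)^k = S_x^{\lambda,k}(f)(s_0),
\]
so the left-hand side of the identity we want is just $S_x^{\lambda,k}(D_{s_0})(0)$, while the integral on the right is exactly the Perron integral applied to the limit function $g(s):=f(s+s_0)$ of $D_{s_0}$. Hence the theorem reduces to applying Corollary~\ref{werdervsgladbach} to the series $D_{s_0}$ with limit function $g$.

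To invoke that corollary I need two things. First, that $D_{s_0}$ is somewhere $(\lambda,k)$-Riesz summable with $\re >0$: this follows from Theorem~\ref{Theo41intro} (equivalently, Corollary~\ref{Theo41reproved}), which gives $(\lambda,k)$-Riesz summability of the germ $D$ on all of $[\re>0]$ whenever $k>\ell$, hence $D_{s_0}$ is $(\lambda,k)$-Riesz summable on $[\re > -\re s_0]\supset[\re>0]$. Second, that $g$ extends holomorphically to $[\re>0]$ and satisfies the uniform growth hypothesis
\[
\forall\,\varepsilon,\delta>0\ \exists\,t_0\ \forall |t|\ge t_0,\sigma>\delta\colon |g(\sigma+it)|\le \varepsilon|\sigma+it|^{1+k}.
\]
Holomorphy is immediate because $\re s_0\ge 0$ and $f$ is holomorphic on $[\re>0]$. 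For the quantitative bound, the defining estimate $|f(s)|\le \|f\|_{\infty,\ell}(1+|s|)^\ell$ for $\re s>0$ gives, for $\sigma>\delta$ and all $t$,
\[
|g(\sigma+it)|=|f(\sigma+it+s_0)|\le \|f\|_{\infty,\ell}\bigl(1+|\sigma+it|+|s_0|\bigr)^{\ell}.
\]
Since $k>\ell$ (so $1+k>\ell$), the ratio $(1+r+|s_0|)^\ell/r^{1+k}\to 0$ as $r\to\infty$, and because $|\sigma+it|\ge |t|$ this decay is uniform in $\sigma>\delta$, which yields the desired condition after choosing $t_0$ large enough.

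With these hypotheses verified, Corollary~\ref{werdervsgladbach} applied to $D_{s_0}$ (and any $c>0$) gives
\[
S_x^{\lambda,k}(D_{s_0})(0)=\frac{\Gamma(1+k)}{2\pi i}\int_{c-i\infty}^{c+i\infty}\frac{g(s)}{s^{1+k}}e^{xs}\,ds,
\]
which is precisely the claimed formula. The only genuine subtlety is checking the "finite order type" growth hypothesis uniformly in $\sigma$, but this is essentially automatic from the strict inequality $k>\ell$; no further analytic work is required since Theorem~\ref{Theo41intro} already supplies the summability on $[\re>0]$ needed to identify $g$ as the limit function of $D_{s_0}$.
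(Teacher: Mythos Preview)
Your argument is mathematically valid if one is willing to invoke M.~Riesz's classical Theorem~\ref{Theo41intro} as an external black box, but within the paper's own logical development it is circular: Corollary~\ref{Theo41reproved} (the paper's reproof of Theorem~\ref{Theo41intro}) is obtained downstream of Theorem~\ref{new-ban}, via the chain Theorem~\ref{new-ban} $\Rightarrow$ Theorem~\ref{T1} $\Rightarrow$ Theorem~\ref{Perronestimate} $\Rightarrow$ Corollary~\ref{41} $\Rightarrow$ Corollary~\ref{Theo41reproved}. Since one declared aim of the paper is precisely to \emph{reprove} Riesz's theorem, you should not assume $(\lambda,k)$-Riesz summability of the germ on $[\re>0]$ at this stage.

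The paper's proof avoids this dependency by working only from the raw definition of a $\lambda$-Riesz germ: there exist $m\ge 0$ and $s_1\in[\re>0]$ with $D$ $(\lambda,m)$-summable at $s_1$, hence also $(\lambda,m+k)$-summable there. One then applies Corollary~\ref{werdervsgladbach} at the higher order $m+k$ (the growth hypothesis is easily checked since $1+m+k>\ell$), obtaining a Perron formula for $S_x^{\lambda,m+k}$, and finally differentiates $m$ times in $x$ under the integral sign to descend from order $m+k$ to order $k$; the Gamma-function factors match via $\Gamma(1+m+k)=\Gamma(1+k)\prod_{j=0}^{m-1}(m+k-j)$. This differentiation trick is the key idea you are missing, and it is what makes Theorem~\ref{new-ban} genuinely prior to (and hence usable in the proof of) the maximal inequality and the approximation theorems that eventually yield Corollary~\ref{Theo41reproved}.
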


\begin{proof} After translation we may assume that $s_{0}=0.$ By assumption the Dirichlet series $D=\sum a_{n}e^{-\lambda_{n}s}$ is $(\lambda,m)$-Riesz summable at some $s_{1}\in [\re>0]$ for some $m\ge 0$. Hence $D$ is $(\lambda,m+k)$-Riesz summable at $s_{1}$
(Theorem~\ref{basic},(i)), and for every $\varepsilon,\sigma>0$ and $|t|\ge t_{0}>0$
$$\frac{|f(\sigma+it)|}{|\sigma+it|^{1+m+k}}\le \|f\|_{\infty,\ell} |\sigma+it|^{-(1+m+k-\ell)}\le t_{0}^{-(1+m+k-\ell)} \|f\|_{\infty,\ell}\le \varepsilon\,,$$
whenever  $t_{0}$ is large enough. Corollary \ref{werdervsgladbach} then implies that for all $c,x>0$
$$\sum_{\lambda_{n}<x} a_{n}(x-\lambda_{n})^{m+k}= \frac{\Gamma(1+m+k)}{2\pi i}  \int_{c -i\infty}^{c +  i\infty} \frac{f(s)e^{xs}}{s^{1+m+k}} ds.$$
Now we differentiate $m$-times obtaining
$$\frac{d^{m}}{dx}\int_{c -i\infty}^{c +  i\infty} \frac{f(s)e^{xs}}{s^{1+k+m}} ds=  \int_{c -i\infty}^{c +  i\infty} \frac{f(s)e^{xs}}{s^{1+k}} ds$$
and
$$\frac{d^{m}}{dx}\sum_{\lambda_{n}<x} a_{n}(x-\lambda_{n})^{m+k}=\prod_{j=0}^{m-1}(m+k-j)\sum_{\lambda_{n}<x} a_{n}(x-\lambda_{n})^{k}\,;$$
for the second equality  see \eqref{B}, and for the first  note that the differentiation under the integral is legit, since for every $j=0,\ldots, m-1$
\begin{equation*}
\int_{c -i\infty}^{c +  i\infty} \Big|\frac{f(s)e^{xs}}{s^{1+k+j}}\Big| ds\le e^{xc}\|f\|_{\infty,\ell} \int_{c-i\infty}^{c+i\infty} |s|^{-(1+k+j-\ell)} ds<\infty.
\end{equation*}
Additionally by the functional equation of the Gamma function we have
$$\Gamma(1+k)\Gamma(1+m+k)=\prod_{j=0}^{m-1}(m+k-j),$$
so that altogether we obtain
\begin{equation*}
S_{x}^{\lambda,k}(f)(0)=\sum_{\lambda_{n}<x} a_{n}(x-\lambda_{n})^{k}= \frac{\Gamma(1+k)}{2\pi i}\int_{c -i\infty}^{c +  i\infty} \frac{f(s)e^{xs}}{s^{1+k}} ds. \qedhere
\end{equation*}
\end{proof}

\bigskip

\subsection{A maximal inequality for Riesz means}

The following maximal inequality  is going to be our major tool to perform a substantial
structure theory of the spaces $H_{\infty,\ell}^{\lambda}[\re>0]$. It shows that for $k > \ell$ the linear projections
\[
R_x^{\lambda,k}:  H_{\infty,\ell}^\lambda[\re >0] \to H_{\infty,\ell}^\lambda[\re >0]\,, \,\,\,\,f \to  R_x^{\lambda,k}f\,, \,\,\,\, x>0,
\]
are uniformly bounded.

\begin{Theo} \label{T1}
Let $k>\ell\ge 0$ and $\lambda$ a frequency. Then  there is a constant
$C= C(k,\ell, \lambda) >0$ such that for all
$f \in H_{\infty,\ell}^\lambda[\re >0]$
\[
\sup_{x>0} \|R_x^{\lambda,k}(f)\|_{\infty,\ell}
\leq
C \|f\|_{\infty,\ell}\,.
\]
In particular,
for every $\sigma \ge 0$ there is a constant  $C= C(\ell,k,\lambda, \sigma)>0$ such that for every $f \in H_{\infty,\ell}^\lambda[\re >0]$
and $t \in \mathbb{R}$
\[
\sup_{x>0}|R_x^{\lambda,k}(f)(\sigma+it)| \leq C (1+|t|)^\ell \|f\|_{\infty,\ell}\,.
\]
\end{Theo}

\begin{proof} We assume first that $\lambda_{1}>0$, and take $s = \sigma + it \in [\re  >0]$ and $y \ge \lambda_1$. Then by Theorem~\ref{new-ban} with $c=y^{-1}$
\begin{align*}
|R_{y}^{\lambda,k}(D)(s)|\leq  \frac{e \Gamma(1+k)}{2\pi }y^{-k} \int_{\mathbb{R}} \frac{|f( s +\frac{1}{y}
+ix)|}{|\frac{1}{y}
+ix|^{1+k}} dx \,,
\end{align*}
  and
\begin{align*}
&\|f\|_{\infty,\ell}^{-1}y^{-k} \int_{\mathbb{R}} \frac{|f( s +\frac{1}{y}
+ix)|}{|\frac{1}{y}
+ix|^{1+k}} dx
\le
 \int_{\mathbb{R}} \frac{y|1 +s+\frac{1}{y}+ix|^{\ell}}{|1+ixy|^{1+k}} dx
\\ &
\le
 2^{\max\{0,\ell-1\}}  \int_{\mathbb{R}} \frac{y(|1 + s|^{\ell}+|\frac{1}{y}+ix|^{\ell})}{|1+ixy|^{1+k}} dx
\\ &
\le
 2^{\max\{0,\ell-1\}}\Big((1+|s|)^\ell \int_{\mathbb{R}} \frac{y}{|1+ixy|^{1+k}} dx+ y^{-\ell} \int_{\mathbb{R}} \frac{y}{|1+ixy|^{1+k-\ell}} dx\Big)
\\ &
\le
 2^{\max\{0,\ell-1\}}\Big((1+|s|)^\ell \int_{\mathbb{R}} \frac{1}{|1+iv|^{1+k}} dv+ \lambda_1^{-\ell} \int_{\mathbb{R}} \frac{1}{|1+iv|^{1+k-\ell}} dv\Big)
 \\ &
\le
 2^{\max\{0,\ell-1\}}\Big((1+|s|)^\ell \big(1+\frac{1}{k}\big)
 + \lambda_1^{-\ell} \big(1+\frac{1}{k-\ell}\big) \Big)
 \\ &
 \le
 2^{\max\{0,\ell-1\}}\Big(\big(1+\frac{1}{k}\big)
 + \lambda_1^{-\ell} \big(1+\frac{1}{k-\ell}\big) \Big) (1+|s|)^\ell \,.
 \end{align*}
  Assume now that $\lambda_{1}=0$ and choose $0<y<\lambda_{2}$. Then by the previous calculation
  $$|a_{0}|\le \|f\|_{\infty,\ell}2^{\max\{0,\ell-1\}}\Big(\big(1+\frac{1}{k}\big)
 + y^{-\ell} \big(1+\frac{1}{k-\ell}\big) \Big) (1+|s|)^\ell.$$
 Tending $y \to \lambda_{2}$ we obtain
 $$|a_{0}|\le C(\lambda,k,\ell)(1+|s|)^\ell\|f\|_{\infty,\ell}.$$
 The 'in particular part' is then a straight forward consequence, since
  \[
  1+|s| \leq  (1+\sigma)\big( 1+ \frac{|t|}{1+\sigma}\big) \leq (1+\sigma)(1+|t|) \,.\qedhere
  \]
\end{proof}

\subsection{Uniform Riesz approximation}

Next we give the main application of the preceding maximal theorem.  We prove that for $k > \ell$ all functions in a  bounded subset of $H_{\infty,\ell}^{\lambda}[\re>0]$, after translation by some arbitrary $u > 0$,
are uniformly $(\lambda,k)$-Riesz approximable in the norm of $H_{\infty,\ell}^{\lambda}[\re>0]$.

\smallskip

\begin{Theo} \label{Perronestimate}
Let $\lambda$ be a frequency and $k>\ell\ge 0$. Then for every bounded set $M\subset H_{\infty,\ell}^{\lambda}[\re>0]$ and every choice of $\varepsilon, u >0$ there is $x_{0}>0$ such that
$$\sup_{x>x_{0}}\sup_{f\in M} \|f(u + \pmb{\cdot})-R_{x}^{\lambda,k}(f)(u + \pmb{\cdot})\|_{\infty,\ell}\le \varepsilon.$$
\end{Theo}
\begin{proof} Let $m\in N_{0}$ such that $m< k \le m+1$, and $C = \sup_{f\in M} \|f\|_{\infty,\ell} < \infty$. Fix $f\in M$ and $u>0$. Then by Lemma  \ref{2} and Theorem \ref{T1} we for every $s\in [\re>0]$ have
\begin{align*}
&\Big|\frac{x^{-k}}{\Gamma(m+2)}\int_{0}^{x}\frac{S_{t}^{\lambda,m+1}(f)(s)}{(1+|s|)^{\ell}} u^{m+2}(x-t)^{k} e^{-ut} dt- \frac{R_{x}^{\lambda,k}(f)(s+u)}{(1+|s|)^{\ell}}\Big| \\ &\le e^{-u x} \frac{|R_{x}^{\lambda,k}(f)(s)|}{(1+|s|)^{\ell}} +C_{1}(m,k) x^{-k} \int_{0}^{x} \frac{|S_{y}^{\lambda,k}(f)(s)|}{(1+|s|)^{\ell}} \sum_{j=1}^{m+1} u^{j}e^{-u y} (x-y
)^{j-1}  dy \\ &\le e^{-ux} C+C_{2}(m,k,u)  C \sum_{j=1}^{m+1} x^{-k} \int_{0}^{x}y^{k}(x-y)^{j-1} e^{-uy} dy,
\end{align*}
where
\begin{align*}
&x^{-k} \int_{0}^{x}y^{k}(x-y)^{j-1} e^{-uy} dy=x^{j} \int_{0}^{x} x^{-1} (\frac{y}{x})^{k} (1-\frac{y}{x})^{j-1} e^{-uy} dy\\ &= x^{j} e^{-ux} \int_{0}^{1} \alpha^{k} (1-\alpha)^{j-1}  d\alpha = x^{j} e^{-ux} \frac{\Gamma(k+1)\Gamma(j)}{\Gamma(k+1-j)}.
\end{align*}
Hence for $x\ge 1$ and $f\in M$
\begin{align*}
&
\sup_{s\in [\re>0]}\Big|\frac{x^{-k}}{\Gamma(m+2)}\int_{0}^{x}\frac{S_{t}^{\lambda,m+1}(f)(s)}{(1+|s|)^{\ell}} u^{m+2}(x-t)^{k} e^{-ut} dt- \frac{R_{x}^{\lambda,k}(D)(s+u)}{(1+|s|)^{\ell}}\Big| \\ &\le Ce^{-ux} (1+C_{3}(m,k,u)x^{m+1}),
\end{align*}
which vanishes uniformly for all $f\in M$ as $x\to \infty$. Additionally, following the proof of Lemma~\ref{lemma-limit} and Lemma~\ref{heute}
the limit
\begin{align*}
\lim_{x\to \infty} \frac{x^{-k}}{\Gamma(m+2)}\int_{0}^{x}&\frac{S_{t}^{\lambda,m+1}(f)(s)}{(1+|s|)^{\ell}} u^{m+2}(x-t)^{k} e^{-ut} dt\\&= \frac{1}{\Gamma(1+k)}\int_{0}^{\infty} \frac{S_{t}^{\lambda,k}(f)(s)}{(1+|s|)^{\ell}} u^{k+1}  e^{-u t} dt=\frac{f(s+u)}{(1+|s|)^{\ell}},
\end{align*}
exists uniformly on $[\re>0]$ and $f\in M$, where the last equality holds by Theorem~\ref{corona2} (or more precisely Remark~\ref{together}). This proves the claim.
\end{proof}

\smallskip

The following corollary is an immediate consequence considering singleton sets.

\smallskip

\begin{Coro} \label{41}
Let $f\in H_{\infty,\ell}^\lambda[\re >0]$ and $k >\ell \ge 0$. Then
for every $u >0$
\[
\text{$
 \lim_{x\to \infty} R_x^{\lambda,k}(f)
 (u + \pmb{\cdot}) = f(u+\pmb{\cdot})$ \,\,\,in $H_{\infty,\ell}^\lambda[\re >0]$}\,.
\]
In particular, for every $u>0$
$$\lim_{x\to \infty} \sup_{s\in [\re>u]} \Big|\frac{f(s)-R_{x}^{\lambda,k}(f)(s)}{(1+s)^{\ell}}\Big|=0.$$
\end{Coro}

\smallskip

\noindent
 To check the 'in particular' part fix $u>0$ and let $s=w+u$, where $w\in [\re>0]$. Then for large $x$ by the first part
\begin{align*}
&
\frac{|f(s)-R_{x}^{\lambda,k}(f)(s)|}{|1+s|^{\ell}}
\\&
=\frac{|f(u+w)-R_{x}^{\lambda,k}(f)(u+w)|}{|1+w|^{\ell}} \frac{|1+w|^{\ell}}{|1+u+w|^{\ell}}
\le \varepsilon \frac{|1+u+w|^{\ell}}{|1+u+w|^{\ell}}=\varepsilon. \qedhere
\end{align*}

\bigskip

\subsection{Theorem 41 of M. Riesz revisted}

Corollary~\ref{41}  immediately implies the following reformulation
of an important result of M. Riesz published in \cite[Theorem 41]{HardyRiesz}. Actually, this result  motivated much of the content of this article.

\begin{Coro} \label{Theo41reproved} If $f\in H_{\infty,\ell}^{\lambda}[\re>0]$, then for every $k>\ell\ge 0$ and $s\in [\re>0]$
\begin{equation*}
f(s)=\lim_{x\to \infty} R_{x}^{\lambda,k}(f)(s).
\end{equation*}
\end{Coro}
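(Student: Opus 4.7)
The plan is to derive the statement as an immediate consequence of the already established Corollary~\ref{41}, which provides the much stronger assertion of uniform convergence in the weighted norm of $H_{\infty,\ell}^\lambda[\re>0]$ after translation by any $u>0$. The pointwise statement demanded by Corollary~\ref{Theo41reproved} is then obtained simply by localising to a neighbourhood of the given point.

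More precisely, I would argue as follows. Fix $f \in H_{\infty,\ell}^\lambda[\re>0]$ and $k>\ell\ge 0$, and let $s \in [\re>0]$ be arbitrary. Choose any $u$ with $0 < u < \re s$. The ``in particular'' part of Corollary~\ref{41} yields
\[
\lim_{x\to\infty} \sup_{w\in[\re>u]} \frac{|f(w) - R_x^{\lambda,k}(f)(w)|}{(1+|w|)^\ell} = 0.
\]
Since $s \in [\re>u]$, we may bound the weighted absolute deviation at $s$ by this supremum, and multiplying back through by the (finite, positive) factor $(1+|s|)^\ell$ gives
\[
|f(s) - R_x^{\lambda,k}(f)(s)| \le (1+|s|)^\ell \sup_{w\in[\re>u]} \frac{|f(w) - R_x^{\lambda,k}(f)(w)|}{(1+|w|)^\ell} \xrightarrow[x\to\infty]{} 0,
\]
which is precisely the claim.

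There is no real obstacle here, since all the substantive work has already been done: the maximal inequality of Theorem~\ref{T1} together with the uniform Riesz approximation of Theorem~\ref{Perronestimate} have been established, and Corollary~\ref{41} is the specialisation of Theorem~\ref{Perronestimate} to a singleton set $M=\{f\}$. The only content of the present corollary, beyond that specialisation, is the trivial observation that uniform convergence in the weighted norm on every half-plane $[\re>u]$, $u>0$, implies pointwise convergence on $[\re>0]$ (since every point of the right half-plane lies in such a translated half-plane). Hence I would write the proof as a single sentence invoking Corollary~\ref{41} and the bound displayed above.
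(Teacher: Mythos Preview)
Your proposal is correct and matches the paper's own approach: the paper states that Corollary~\ref{41} ``immediately implies'' Corollary~\ref{Theo41reproved}, and your argument is precisely the trivial localisation you describe.
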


\smallskip

Let us point out that the mentioned  result of Riesz actually is formulated  in terms of $(e^{\lambda},k)$-Riesz
summability (recall the definition of $(\lambda,k)$-Riesz means of second kind from \eqref{secondkind}). In its original form as given in \cite{HardyRiesz} it reads as follows (using our terminology of having a Riesz germ).

\smallskip

\begin{Theo}\label{Theo41} Let $f\colon [\re>0] \to \C$ be holomorphic with a $\lambda$-Riesz germ $D=\sum a_{n}e^{-\lambda_{n}s}$.
Assume that there is $\ell\ge 0$ such that
\begin{equation} \label{conditionintroA}
\forall~ \varepsilon>0~ \exists~ C>0\colon |f(s)|\le C|s|^{\ell}, ~~ s\in [\re>\varepsilon].
\end{equation}
Then for every $k>\ell$ and $s\in [\re>0]$
$$f(s)=\lim_{x\to \infty}T_{x}^{\lambda,k}(D)(s) = \lim_{x\to \infty} \sum_{\lambda_{n}<x} a_{n} \big(1- e^{\lambda_{n}-x})^{k} e^{-\lambda_{n}s}.$$
\end{Theo}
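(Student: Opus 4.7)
My plan is to bootstrap from Corollary~\ref{Theo41reproved} via translation, and then to transfer from first-kind to second-kind Riesz summability using Theorem~\ref{niceproof?}.

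The first obstacle is that the growth hypothesis \eqref{conditionintroA} is weaker than the uniform bound defining $H_{\infty,\ell}^{\lambda}[\re>0]$, because the constant may blow up as $\varepsilon\to 0$. I would sidestep this by translation: for arbitrary $u>0$, set $f_{u}(s)=f(s+u)$. Plugging $\varepsilon=u$ into \eqref{conditionintroA} together with the elementary estimate $|s+u|\le (1+u)(1+|s|)$ on $[\re>0]$ yields $|f_{u}(s)|\le C_{u}(1+|s|)^{\ell}$. A $\lambda$-Riesz germ of $f_{u}$ is $D_{u}=\sum a_{n}e^{-\lambda_{n}u}e^{-\lambda_{n}s}$, since $R_{x}^{\lambda,m}(D_{u})(s)=R_{x}^{\lambda,m}(D)(s+u)$. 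Hence $f_{u}\in H_{\infty,\ell}^{\lambda}[\re>0]$, and Corollary~\ref{Theo41reproved} applied to $f_{u}$ gives, for every $k>\ell$ and every $s\in[\re>0]$,
\[
f(s+u)=f_{u}(s)=\lim_{x\to\infty} R_{x}^{\lambda,k}(f_{u})(s)=\lim_{x\to\infty} R_{x}^{\lambda,k}(D)(s+u).
\]
Varying $u>0$ and using $[\re>0]=\bigcup_{u>0}[\re>u]$, this shows $D$ is $(\lambda,k)$-Riesz summable to $f$ throughout $[\re>0]$, so in particular $\sigma_{c}^{\lambda,k}(D)\le 0$.

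For the second step I would invoke Theorem~\ref{niceproof?}, which yields $\sigma_{c}^{e^{\lambda},k}(D)=\sigma_{c}^{\lambda,k}(D)\le 0$, so that $D$ is also $(e^{\lambda},k)$-Riesz summable on $[\re>0]$. By Remark~\ref{basic}(ii) the $(e^{\lambda},k)$- and $(\lambda,k)$-limits must agree, and combining this with the identification $T_{x}^{\lambda,k}(D)(s)=R_{e^{x}}^{e^{\lambda},k}(D)(s)$ recorded just after \eqref{secondkind} I obtain
\[
\lim_{x\to\infty} T_{x}^{\lambda,k}(D)(s)=\lim_{y\to\infty} R_{y}^{e^{\lambda},k}(D)(s)=\lim_{x\to\infty} R_{x}^{\lambda,k}(D)(s)=f(s),
\]
which is the desired conclusion.

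The deepest ingredient in this plan is Theorem~\ref{niceproof?} itself, which enters as a black box; everything else amounts to translation bookkeeping together with a direct application of the already established Corollary~\ref{Theo41reproved}. Beyond verifying that the translate $f_{u}$ genuinely sits in the Banach-space framework $H_{\infty,\ell}^{\lambda}[\re>0]$ with the expected $\lambda$-Riesz germ $D_{u}$ (which is immediate from the definitions and the uniqueness statement in Corollary~\ref{normed}), I do not anticipate further obstacles.
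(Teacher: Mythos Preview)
Your proposal is correct and follows essentially the same route as the paper: the paper deduces Theorem~\ref{Theo41} from Corollary~\ref{Theo41reproved} by observing that the translate $f_{\varepsilon}=f(\varepsilon+\cdot)$ lies in $H_{\infty,\ell}^{\lambda}[\re>0]$ for every $\varepsilon>0$, and invokes Theorem~\ref{niceproof?} to pass between $(\lambda,k)$- and $(e^{\lambda},k)$-Riesz summability. Your write-up is in fact a bit more careful than the paper's sketch, making the growth estimate for $f_u$ and the identification of the second-kind limit via Remark~\ref{basic}(ii) explicit.
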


\smallskip
Let us explain why Theorem~\ref{Theo41} and Corollary~\ref{Theo41reproved} in fact are equivalent. Recall first, that by  Theorem~\ref{niceproof?}
 every $\lambda$-Dirichlet series is $(\lambda,k$)-Riesz summable
on all half-planes $[\re > \sigma], \, \sigma >0$
if and only if this series is $(e^{\lambda},k)$-Riesz summable on all such half-planes.
 So it is obvious that Theorem \ref{Theo41} is applicable to every  $f \in H_{\infty,\ell}^{\lambda}[\re>0]$;
 indeed, given $\varepsilon >0$, we  have
 \[
 \sup_{\re s >\varepsilon}\frac{|f(s)|}{|s|^\ell} \leq \|f\|_{\infty, \ell} \sup_{\re s >\varepsilon}\frac{(1+|s|)^\ell}{|s|^\ell}
 < \infty\,.
 \]
Conversely, if a holomorphic function $f$ on $[\re>0]$ has a $\lambda$-Riesz germ and satisfies \eqref{conditionintroA}, then $f_{\varepsilon}=f(\varepsilon+ \cdot) \in H_{\infty,\ell}^{\lambda}[\re>0]$ for every~$\varepsilon>0$,
and consequently we may apply  Corollary~\ref{Theo41reproved}.

\smallskip

The decision on why we focus on $(\lambda,k)$-Riesz means instead of $(e^{\lambda},k)$-Riesz means is based on the integral representation
\begin{equation*}\label{perronfirsttypeintro}
R_{x}^{\lambda,k}(f)(s_{0})=\frac{\Gamma(1+k)}{2\pi i} x^{-k} \int_{c-i\infty}^{c+i\infty}  f(s+s_{0}) \frac{e^{xs}}{s^{1+k}}  ds
\end{equation*}from Theorem~\ref{Perron0}
of the $x$th Riesz means of first kind  of a Dirichlet series $D=\sum a_{n}e^{-\lambda_{n}s}$ in terms of its limit function~$f$\,,
whereas by \cite[Theorem 40, p. 51]{HardyRiesz} for Riesz means of second kind we have
\begin{equation*} \label{perronfirsttypeintro2}
T_{x}^{\lambda,k}(D)(s_0)=\frac{\Gamma(1+k)}{2\pi i} \int_{c-i\infty}^{c+i\infty}f(s+s_{0}) \frac{\Gamma(s)e^{xs}}{\Gamma(1+k+s)} ds.
\end{equation*}
In fact, we believe that the  handling of the kernels $ e^{xs}/s^{1+k}$  is  more  simple than that of the kernels  $\Gamma(s)e^{xs}/\Gamma(1+k+s)$.

\bigskip

\subsection{The bounded case $\ell =0$}

For $\ell=0$ and $f\in H_{\infty,0}^{\lambda}[\re>0]$, Corollary \ref{41} shows that for every $\sigma>0$
\begin{equation*}
f=\lim_{x\to \infty} R_{x}^{\lambda,k}(f) \,\,\,\, \text{ uniformly on $[\re>\sigma]$ }\,.
\end{equation*}
Hence, as the uniform limit of  uniformly almost periodic function on $\mathbb{R}$ is again uniformly almost periodic,  the function $t\mapsto f(\sigma+it)$ is uniformly almost periodic  for every $\sigma>0$, this has the following consequence.

\begin{Coro}\label{oldcase} For every frequency $\lambda$
$$H_{\infty,0}^{\lambda}[\re>0]=H_{\infty}^{\lambda}[\re>0].$$
\end{Coro}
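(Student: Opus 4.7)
The plan is to establish the equality by proving both inclusions separately, then verifying that the identification is coefficient preserving and norm preserving (so that the canonical embedding referenced in \eqref{l=0} is indeed an isometric isomorphism).

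First I would verify the inclusion $H_{\infty}^{\lambda}[\re>0]\subset H_{\infty,0}^{\lambda}[\re>0]$. Given $f\in H_{\infty}^{\lambda}[\re>0]$, the approximation theorem \eqref{periodic} (proved as \cite[Corollary~2.19]{defant2020riesz}) immediately shows that $D=\sum a_{\lambda_n}(f)e^{-\lambda_{n}s}$ is a $\lambda$-Riesz germ for $f$, since the $(\lambda,1)$-Riesz means of $D$ converge to $f$ pointwise on $[\re>0]$. Boundedness of $f$ on $[\re>0]$ gives $\|f\|_{\infty,0}=\sup_{\re s>0}|f(s)|<\infty$, so $f\in H_{\infty,0}^{\lambda}[\re>0]$ with the same norm.

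For the reverse inclusion $H_{\infty,0}^{\lambda}[\re>0]\subset H_{\infty}^{\lambda}[\re>0]$, fix $f\in H_{\infty,0}^{\lambda}[\re>0]$ and pick any $k>0$. By Corollary~\ref{41} applied with $\ell=0$, for every $u>0$ we have $R_{x}^{\lambda,k}(f)(u+\pmb{\cdot})\to f(u+\pmb{\cdot})$ in $\|\pmb{\cdot}\|_{\infty,0}$, which (since $\ell=0$) is just uniform convergence on $[\re>0]$. Equivalently, $R_{x}^{\lambda,k}(f)\to f$ uniformly on each half-plane $[\re>u]$, $u>0$. Restricting to a vertical line $[\re=\sigma]$ with $\sigma>u$, the functions
\[
t\mapsto R_{x}^{\lambda,k}(f)(\sigma+it)=\sum_{\lambda_{n}<x}a_{n}(f)e^{-\lambda_{n}\sigma}\Big(1-\frac{\lambda_{n}}{x}\Big)^{k}e^{-i\lambda_{n}t}
\]
are trigonometric polynomials on $\mathbb{R}$ converging uniformly in $t$ to $t\mapsto f(\sigma+it)$. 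Since the uniform closure of trigonometric polynomials is exactly the space of uniformly almost periodic functions (Bohr's theorem), $t\mapsto f(\sigma+it)$ is uniformly almost periodic on $\mathbb{R}$, and letting $u\to 0^{+}$ yields this for every $\sigma>0$.

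It remains to identify the Bohr spectrum and coefficients. Here I would invoke the standard fact that the Bohr transform $g\mapsto a_{\mu}(g)$ is continuous on the space of uniformly almost periodic functions endowed with the sup norm, with $|a_{\mu}(g_{1})-a_{\mu}(g_{2})|\le\|g_{1}-g_{2}\|_{\infty}$. Since the Bohr coefficient at $\mu$ of $t\mapsto R_{x}^{\lambda,k}(f)(\sigma+it)$ equals $a_{n}(f)e^{-\lambda_{n}\sigma}(1-\lambda_{n}/x)^{k}$ when $\mu=\lambda_{n}<x$ and vanishes otherwise, passing to the uniform limit as $x\to\infty$ gives that the Bohr coefficients of $t\mapsto f(\sigma+it)$ are supported in $\{\lambda_{n}\}$ with values $a_{n}(f)e^{-\lambda_{n}\sigma}$. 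Together with \eqref{bohrcoeffintro} (whose value is independent of $\sigma$), this shows $f\in H_{\infty}^{\lambda}[\re>0]$ and that the canonical identification preserves the Bohr coefficients $a_{n}(f)$. The isometry is automatic since both norms coincide with $\sup_{\re s>0}|f(s)|$ when $\ell=0$.

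The main obstacle is really only bookkeeping: the hard analytic content (namely uniform approximability by Riesz means) is already supplied by Corollary~\ref{41}, so the work reduces to correctly transferring information from holomorphic functions on a half-plane to almost periodic functions on vertical lines and verifying spectral support. No new estimates are needed.
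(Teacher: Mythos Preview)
Your proof is correct and follows essentially the same approach as the paper: for the nontrivial inclusion you apply Corollary~\ref{41} at $\ell=0$ to get uniform convergence of the Riesz means on each $[\re>\sigma]$, and then use that uniform limits of trigonometric polynomials are uniformly almost periodic. Your write-up is in fact more complete than the paper's, which does not explicitly verify that the Bohr spectrum of $t\mapsto f(\sigma+it)$ is contained in $\{\lambda_n\}$ or that the identification is coefficient preserving; your continuity argument for the Bohr coefficients fills that gap cleanly.
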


\bigskip

\subsection{Boundedness of coefficient functionals}

We explained above that every function in $H_{\infty,\ell}^{\lambda}[\re>0]\to \mathbb{C}$ is uniquely determined by its
Bohr coefficients $a_N(f), N~\in~\mathbb{N}$ (Remark~\ref{assign}). The following result shows that this assignment  in fact is continuous.

\smallskip
\begin{Theo}\label{boundednesscoeff} Let $k>\ell \ge 0$ and $\lambda=(\lambda_{n})$ a frequency. Then for each  $N\in \N$  the coefficient mapping
$$\mathcal{C}_{N}\colon H_{\infty,\ell}^{\lambda}[\re>0]\to \mathbb{C}, ~~ f \mapsto a_{N}(f)$$
is bounded. More precisely, with $\lambda_{-1}:=0$
$$\|\mathcal{C}_N\|\le C(k, \ell, \lambda_1)\left(\big(\frac{\lambda_{N+1}}{\lambda_{N+1}-\lambda_{N}}\big)^{k}+\big(\frac{\lambda_{N}}{\lambda_{N}-\lambda_{N-1}}\big)^{k} \right) \,, ~~ N\in \N.$$
\end{Theo}

\smallskip

Since $a_{N}(f)=\sum_{n=1}^{N}a_{n}(f)-\sum_{n=1}^{N-1} a_{n}(f)$, the proof follows from the following independently interesting lemma.

\smallskip

\begin{Lemm} \label{coeff}
Let $k>\ell \ge 0$. Then for every $f \in    H_{\infty,\ell}^{\lambda}[\re>0]$ and $M\in \N$
\begin{equation*} \label{zero to k}
\Big|\sum_{n=1}^{M} a_{n}(f)\Big|\le C(k, \ell, \lambda) \Big(\frac{\lambda_{M+1}}{\lambda_{M+1}-\lambda_{M}}\Big)^{k}
 \,
\|f\|_{\infty,\ell}\,.
\end{equation*}
\end{Lemm}
\begin{proof}
From \cite[Lemma 3.5]{schoolmann2018bohr} we know that for every $M\in \N$
$$\Big|\sum_{n=1}^{M} a_{\lambda_n}(f)\Big|\le 3 \big(\frac{\lambda_{M+1}}{\lambda_{M+1}-\lambda_{M}}\big)^{k} \sup_{\lambda_{1}<x< \lambda_{M+1}} \Big| \sum_{\lambda_{n}<x}a_{\lambda_n}(f) (1-\frac{\lambda_{n}}{x})^{k}\Big|.$$
Then the conclusion follows from  Theorem~\ref{T1}.
\end{proof}

\bigskip

\subsection{A Montel theorem} \label{monti}

According to \cite[Theorem~3.4]{DeFeScSe2020}, we for any frequency $\lambda$ have  a Montel-type theorem in $H_{\infty}^{\lambda}[\re>0]$.
The aim now is to extend this result  from  $H_{\infty}^{\lambda}[\re>0]$ to $H_{\infty,\ell}^{\lambda}[\re>0],\, \ell >0$.

Recall from \eqref{periodic-coin} that, given a frequency $\lambda$,
there is an isometric and  coefficient preserving identity $H_{\infty}^{\lambda}[\re>0] = \mathcal{H}_\infty(\lambda)$, so Montel's theorem
holds in $\mathcal{H}_\infty(\lambda)$, i.e.
for each bounded sequence $(D_N)$ in
$\mathcal{H}_\infty(\lambda)$  there is a subsequence $(D_{N_k})$ and $D \in \mathcal{H}_\infty(\lambda)$
such that for every $\varepsilon >0$ the sequence of all translations
$D^\varepsilon_{N_k} = \sum a_n e^{-\lambda_{n}\varepsilon} e^{-\lambda_{n}s}$ converges to $D\in \mathcal{H}_\infty(\lambda)$.

The important special case $\lambda= (\log n)$ was  first proved by  Bayart in \cite{Ba02}, and within the setting of Hardy space of general Dirichlet series it was  substantially extended  in \cite{defantschoolmann2019Hptheory},~\cite{defant2020riesz}, and~\cite{DeFeScSe2020}.
See also the discussion in Section~\ref{SectionCoincidence}.

\smallskip

\begin{Theo} \label{Montel}
Let $\lambda$ be an arbitrary  frequency, and  $(f_N)$ a bounded sequence in $H_{\infty,\ell}^\lambda~[\re>0], \ell \ge 0$. Then there is a subsequence
$(f_{N_j})_j$ of $(f_N)$ and $f \in H_{\infty,\ell}^\lambda[\re >0]$ such that
for every $ u>0$
\[
\text{$f_{N_j} (u + \pmb{\cdot}) \to f( u+\pmb{\cdot})$ \,\,\,\,in \,\,$H_{\infty,\ell}^\lambda[\re >0]$}\,.
\]
\end{Theo}

\smallskip

In fact we are going to deduce this result from the following more general version.

\begin{Theo}  \label{generalMontel}
Let $(f_N)$ be a bounded sequence in $H_{\infty,\ell}^\lambda[\re >0]$. Assume that for every $n$ the limit
$$a_{n}=\lim_{N\to \infty} a_{n}(f_{N})$$
exists. Then there exists  $f\in  H_{\infty,\ell}^\lambda[\re >0]$ such that $a_n(f) = a_n$ for all $n$,
and for every $u>0$
\[
\text{$f_{N} (u + \pmb{\cdot}) \to f(u+\pmb{\cdot})$ \,\,\,in\,\, $H_{\infty,\ell}^\lambda[\re >0]$}\,.
\]
\end{Theo}
\begin{proof} We define formally the Dirichlet series $E=\sum a_{n}e^{-\lambda_{n}s}$. Then Theorem~\ref{T1} implies that
\begin{equation}\label{TT11}
  |R_{x}^{\lambda,k}(E)(0)|=\lim_{N\to \infty} |R_{x}^{\lambda,k}(f_{N})(0)|\le C \sup_{N\in \N} \|f_{N}\|_{\infty,\ell}.
\end{equation}
Hence, applying Theorem \ref{Bohr-Cahen-Riesz}, we see that $E$ is $(\lambda,k)$-Riesz summable on $[\re>0]$. Moreover, denoting by $f$ its limit function, we by \eqref{TT11} have
$$\frac{|f(s)|}{(1+|s|)^{\ell}}=\lim_{x \to \infty} \lim_{N \to \infty}\frac{|R_{x}^{\lambda,k}(f_{N})(s)|}{(1+|s|)^{\ell}}\le C \sup_{N\in \N} \|f_{N}\|_{\infty,\ell}\,,$$
and so $f\in H_{\infty,\ell}^{\lambda}[\re>0]$. It remains to show that $f_{N} (u + \pmb{\cdot}) \to f(u+\pmb{\cdot})$ in $H_{\infty,\ell}^\lambda[\re >0]$. Fixing $u>0$ and  applying Theorem \ref{Perronestimate} to the bounded set $M=\{f_{N} \mid N\in \N\}\cup\{f\}$, we for every $\varepsilon>0$ obtain some $x>0$ such that
\begin{align*}
&\|f(u+\cdot)-f_{N}(u+\cdot)\|_{\infty,\ell}\le \|f(u+\cdot)-R_{x}^{\lambda,k}(f)(u+\cdot)\|_{\infty,\ell}\\&+\|R_{x}^{\lambda,k}(f-f_{N})(u+\cdot)\|_{\infty,\ell}+\|f(u+\cdot)-R_{x}^{\lambda,k}(f_{N})(u+\cdot)\|_{\infty,\ell}\\&\le 2\varepsilon+\|R_{x}^{\lambda,k}(f-f_{N})(u+\cdot)\|_{\infty,\ell}.
\end{align*}
Now tending $N\to \infty$ gives the claim.
\end{proof}

\begin{proof}[Proof of Theorem~\ref{Montel}] From Theorem \ref{boundednesscoeff} we know that $|a_{n}(g)|\le C(n) \|g\|_{\infty,\ell}$ for every $g\in H_{\infty,\ell}^{\lambda}[\re>0]$. This allows to apply a diagonal process to find a subsequence $(N_{j})$ such that  the limit
$$a_{n}=\lim_{j\to \infty} a_{n}(f_{N_{j}})$$
exists for every $n$. Then Theorem \ref{generalMontel} finishes the proof.
\end{proof}

\bigskip

\subsection{Completeness}
As we have already announced several times, all spaces $H_{\infty,\ell}^{\lambda}[\re>0], \, \ell \ge 0$ are  indeed  Banach spaces, which after all seems non-trivial.

\begin{Theo} \label{completeness}
For each $\ell \ge 0$
$$H_{\infty,\ell}^{\lambda}[\re>0]$$
together with the norm
 $\|\cdot\|_{\infty,\ell}$
 forms a Banach space.
\end{Theo}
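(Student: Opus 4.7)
The plan is to take a Cauchy sequence $(f_N)$ in $H_{\infty,\ell}^\lambda[\re>0]$ and produce a limit $f$ in the same space. First, the pointwise estimate $|f_N(s)-f_M(s)|\leq \|f_N-f_M\|_{\infty,\ell}(1+|s|)^\ell$, together with the local boundedness of $(1+|s|)^\ell$, shows that $(f_N)$ is uniformly Cauchy on every compact subset of $[\re>0]$. Hence by the Weierstrass theorem $(f_N)$ converges locally uniformly to a holomorphic function $f\colon[\re>0]\to\mathbb{C}$, and since $(\|f_N\|_{\infty,\ell})$ is bounded the pointwise limit automatically inherits the growth $\|f\|_{\infty,\ell}<\infty$.

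The hard step is to verify that $f$ actually admits a $\lambda$-Riesz germ, so that $f$ belongs to $H_{\infty,\ell}^\lambda[\re>0]$ as a set and not only satisfies the weighted bound. Here I would exploit the continuity of the Bohr coefficient functionals from Theorem~\ref{boundednesscoeff} (after fixing any $k>\ell$): the Cauchy property of $(f_N)$ in norm forces each scalar sequence $\bigl(a_n(f_N)\bigr)_N$ to be Cauchy, hence to converge to some $a_n\in\mathbb{C}$. Bounded sequence in $H_{\infty,\ell}^\lambda[\re>0]$ plus pointwise coefficient limits is precisely the hypothesis of the abstract Montel Theorem~\ref{generalMontel}, which then supplies a function $g\in H_{\infty,\ell}^\lambda[\re>0]$ with Bohr coefficients $(a_n)$ and with $f_N(u+\pmb{\cdot})\to g(u+\pmb{\cdot})$ in $\|\pmb{\cdot}\|_{\infty,\ell}$ for every $u>0$.

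It remains to identify $g$ with $f$ on $[\re>0]$ and to upgrade to norm convergence of $(f_N)$ to $f$ on the full right half-plane. Norm convergence of translates yields $f_N(u+s)\to g(u+s)$ pointwise for each $u>0$ and $s\in[\re>0]$, while the Weierstrass limit from the first step simultaneously gives $f_N(u+s)\to f(u+s)$; hence $f$ and $g$ coincide on every half-plane $[\re>u]$, $u>0$, and therefore on all of $[\re>0]$, so that $f\in H_{\infty,\ell}^\lambda[\re>0]$. To close the argument I would fix $\varepsilon>0$ and $N_0$ with $\|f_N-f_M\|_{\infty,\ell}<\varepsilon$ for all $N,M\geq N_0$, and then pass $M\to\infty$ inside the defining supremum, using $f_M(s)\to f(s)$ pointwise on $[\re>0]$, to conclude $\|f_N-f\|_{\infty,\ell}\leq\varepsilon$ for all $N\geq N_0$.

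I expect the main obstacle to be precisely the extraction of a $\lambda$-Riesz germ for $f$: a weighted sup-Cauchy sequence of holomorphic functions carries no a priori $\lambda$-Dirichlet structure, and the pointwise limit produced by Weierstrass is only a holomorphic function with the right growth. What rescues the situation is the maximal inequality of Theorem~\ref{T1}, which powers both the coefficient continuity (Theorem~\ref{boundednesscoeff}) and the Montel theorem (Theorem~\ref{generalMontel}); these together retain the coefficient data in the limit and allow Theorem~\ref{generalMontel}, built on the Bohr--Cahen formula, to construct the required Riesz germ for free.
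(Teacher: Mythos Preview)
Your proof is correct and follows essentially the same route as the paper: invoke the Montel-type machinery to produce a limit lying in $H_{\infty,\ell}^{\lambda}[\re>0]$, and then combine pointwise convergence with the Cauchy condition to upgrade to norm convergence. The only cosmetic differences are that the paper quotes Theorem~\ref{Montel} (so it first passes to a subsequence), whereas you use the coefficient continuity of Theorem~\ref{boundednesscoeff} to feed the full sequence directly into Theorem~\ref{generalMontel}; and you add a preliminary Weierstrass step to independently secure the pointwise limit, which the paper instead reads off from the translate convergence in the Montel theorem.
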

\begin{proof} Given a Cauchy sequence $(f_{N})$, Theorem~\ref{Montel} proves the existence of a subsequence $(N_{j})$ and $f\in H_{\infty,\ell}^{\lambda}[\re>0]$, such that for every $u>0$ we have
$$\lim_{j\to \infty} \|f_{N_{j}}(u+\cdot)-f(u+\cdot)\|_{\infty,\ell}=0,$$
which in particular shows that $(f_{N_{j}})$ converges pointwise to $f$ on $[\re>0]$.
Hence for large $N$ and $j=j(s)$ we obtain
\begin{align*}
&\Big|\frac{f(s)-f_{N}(s)}{(1+s)^{\ell}}\Big|\le \Big|\frac{f(s)-f_{N_{j}}(s)}{(1+s)^{\ell}}\Big|+\|f_{N_{j}}-f_{N}\|_{\infty,\ell}\le |f(s)-f_{N_{j}}(s)|+\varepsilon,
\end{align*}
and so $\|f-f_{N}\|_{\infty,\ell}\le 2\varepsilon$ tending $j\to \infty$.
\end{proof}

\bigskip

\subsection{Behavior far left} \label{far left}
The following result shows that the norm of any $f \in H^{\lambda}_{\infty,\ell}[\re >0]$  determines
the growth of $f$ on vertical abscissas close to the imaginary axis.

\begin{Theo} \label{new}
Let $f \in H^{\lambda}_{\infty,\ell}[\re >0]$ with $\ell \ge 0$.  Then
\[
\|f\|_{\infty,\ell} = \lim_{\sigma \to 0}  \sup_{t \in \R}  \Big| \frac{f(\sigma + i t)}{(1+(\sigma +i t))^\ell} \Big|\,.
\]
\end{Theo}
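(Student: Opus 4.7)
The plan is to reduce the statement to a Phragmén--Lindelöf principle applied to the holomorphic auxiliary function obtained by dividing $f$ by a suitable power of $(1+s)$. Define
\[
F(s) := \frac{f(s)}{(1+s)^\ell}, \qquad s \in [\re >0],
\]
where $(1+s)^\ell$ denotes the principal branch, which is holomorphic on $[\re >-1]$ because $\re(1+s) > 1$ on $[\re >0]$. Then $F$ is holomorphic on $[\re >0]$ and $|F(s)| = |f(s)|/|1+s|^\ell$. Using the bound $1+|s| \leq 2|1+s|$ valid on $[\re s \ge 0]$ (recorded in the paper immediately after \eqref{norway}), one obtains $\|F\|_\infty := \sup_{\re s>0} |F(s)| \leq 2^\ell \|f\|_{\infty,\ell} < \infty$, so $F$ is bounded and holomorphic on $[\re>0]$.

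Fix any $\sigma_0 > 0$ and set $M(\sigma_0) := \sup_{t \in \R} |F(\sigma_0+it)|$. Since the line $\re s = \sigma_0$ lies inside the open domain of holomorphy of $F$, the function $F$ is continuous on the closed half-plane $[\re s \ge \sigma_0]$ and bounded there. The classical Phragmén--Lindelöf principle for a half-plane (with the trivial growth condition, since $F$ is already bounded) then yields
\[
\sup_{\re s \ge \sigma_0} |F(s)| \;=\; M(\sigma_0),
\]
so $M(\sigma_0)$ is decreasing in $\sigma_0 > 0$. Letting $\sigma_0 \to 0^+$,
\[
\lim_{\sigma_0 \to 0^+} M(\sigma_0) \;=\; \sup_{\sigma_0>0} M(\sigma_0) \;=\; \sup_{\sigma_0 > 0} \sup_{\re s > \sigma_0} |F(s)| \;=\; \sup_{\re s > 0} |F(s)|,
\]
which in the notation of the theorem reads exactly $\lim_{\sigma \to 0}\sup_{t \in \R} |f(\sigma+it)/(1+(\sigma+it))^\ell| = \|f\|_{\infty,\ell}$, after identifying $\|f\|_{\infty,\ell}$ with its equivalent form $\sup_{\re s>0} |f(s)|/|1+s|^\ell$ (the paper's two denominators $(1+|s|)^\ell$ and $|1+s|^\ell$ define equivalent norms by the inequality chain recorded after \eqref{norway}, and it is the holomorphic version that intervenes naturally here).

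The only real technical step is the application of Phragmén--Lindelöf; the main obstacle would be a growth condition on $F$, but this disappears entirely because the growth hypothesis on $f$ together with the elementary inequality $1+|s| \leq 2|1+s|$ already forces $F$ to be bounded on the open right half-plane. Once $F$ is known to be bounded and holomorphic, the monotonicity of $\sigma_0 \mapsto \sup_t |F(\sigma_0+it)|$ and the resulting limit identity are immediate, and the theorem follows.
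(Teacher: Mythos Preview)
Your proof is correct and takes a more direct route than the paper's. Both arguments ultimately rest on a Phragm\'en--Lindel\"of principle, but they deploy it differently. The paper approximates $f$ by its Riesz means, considers $g_x(s)=R_x^{\lambda,k}(f)(s)/(1+s)^\ell$, applies the log-convexity lemma (Lemma~\ref{doetsch}) on strips together with $\log L_{g_x}(\infty)=-\infty$ to obtain that each $L_{g_x}$ is non-increasing, and only then passes to the limit $x\to\infty$ via Corollary~\ref{41} to transfer the monotonicity to $L_g$. You bypass the approximation entirely: since $F(s)=f(s)/(1+s)^\ell$ is already bounded and holomorphic on $[\re>0]$, the half-plane Phragm\'en--Lindel\"of principle gives $\sup_{\re s\ge\sigma_0}|F(s)|=\sup_{t}|F(\sigma_0+it)|$ directly, and monotonicity of $\sigma_0\mapsto M(\sigma_0)$ follows at once. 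Your path is shorter and uses none of the paper's Riesz-summation machinery; the paper's path stays within its own toolkit (Riesz means plus the strip version of the three-lines lemma), at the cost of an extra limiting step. You are also right to note the harmless ambiguity between the denominators $(1+|s|)^\ell$ and $|1+s|^\ell$; the paper's proof itself works with the holomorphic version $|1+s|^\ell$, so the statement is to be read with that identification.
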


\smallskip

The proof needs a well-known Phragmen-Lindel\"of type lemma (see e.g. \cite[p.~138]{Be54}).

\smallskip

\begin{Lemm} \label{doetsch}
Let $g$ be a bounded function on the strip $[\sigma_1 \leq \re \leq \sigma_1]$, which is holomorphic on
$[\sigma_1 < \re  < \sigma_1]$. Then the function
\[
L_g(\sigma) = \sup_{t \in \R}  \big| g(\sigma + it)  \big|\,, \,\,\, \sigma \in [\sigma_1, \sigma_2]\,.
\]
is logarithmically convex on $[\sigma_1, \sigma_2]$\,, i.e. $\log L_g(\sigma)$ is convex on $[\sigma_1, \sigma_2]$\,.
\end{Lemm}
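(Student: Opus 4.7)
The plan is to establish log-convexity by reducing it to the classical Hadamard three-lines inequality on an arbitrary substrip, and then proving that inequality via a normalization followed by Phragmén-Lindelöf. Recall that log-convexity of $L_g$ on $[\sigma_1, \sigma_2]$ is equivalent to the assertion that for every choice of $a, b$ with $\sigma_1 \le a < b \le \sigma_2$ and every $\tau \in [0,1]$,
\[
L_g\big((1-\tau)a + \tau b\big) \le L_g(a)^{1-\tau}\, L_g(b)^{\tau}.
\]
So I would fix such $a, b, \tau$ and restrict attention to the closed substrip $S = [a \le \re \le b]$, on which $g$ remains bounded and holomorphic in its interior.

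First I would treat the non-degenerate case $M_a := L_g(a) > 0$ and $M_b := L_g(b) > 0$. Set
\[
\alpha = \frac{\log M_a - \log M_b}{b - a} \in \R,
\]
and consider the auxiliary function $h(s) = g(s)\, e^{\alpha s}$, which is bounded on $S$ and holomorphic in its interior. By construction $|h(a+it)| \le M_a e^{\alpha a}$ and $|h(b+it)| \le M_b e^{\alpha b}$, and these two bounds are equal; call their common value $K$. The pivotal step is now the Phragmén-Lindelöf principle for a strip: a bounded holomorphic function on $S$ whose modulus is bounded by $K$ on the two bounding lines satisfies $|h(s)| \le K$ throughout $S$. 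Evaluating this at $s = \sigma + it$ with $\sigma = (1-\tau)a + \tau b$ and using
\[
K\, e^{-\alpha \sigma} = M_a\, e^{\alpha(a-\sigma)} = M_a^{1-\tau} M_b^{\tau},
\]
yields $|g(\sigma + it)| \le M_a^{1-\tau} M_b^{\tau}$ uniformly in $t \in \R$, which is exactly the desired three-lines inequality.

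The degenerate possibility that $M_a = 0$ or $M_b = 0$ is handled by a small perturbation: for any $\varepsilon > 0$, the number $\max(M_a, \varepsilon)$ is still an upper bound for $|g|$ on the line $\re = a$, so the argument above gives $L_g(\sigma) \le \max(M_a, \varepsilon)^{1-\tau} \max(M_b, \varepsilon)^{\tau}$. Letting $\varepsilon \to 0$ forces $L_g(\sigma) = 0$ whenever the right-hand side vanishes, which is fully compatible with the convention $\log 0 = -\infty$ needed to read log-convexity as a statement about extended-real-valued convex functions.

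The main obstacle, and really the only non-formal point, is the Phragmén-Lindelöf step itself: the hypothesis only gives boundedness on the closed strip, with no assumed decay as $|\Im s| \to \infty$. The standard workaround is to introduce a further damping factor and consider $h_\delta(s) = h(s)\, e^{\delta (s - c)^2}$ on large finite rectangles $\{a \le \re s \le b,\; |\Im s| \le T\}$ with $c = (a+b)/2$. Since $\re(s-c)^2 = (\re s - c)^2 - (\Im s)^2$, this factor decays like $e^{-\delta T^2 + O(1)}$ on the two horizontal sides, so for $T$ large the maximum of $|h_\delta|$ on the rectangle is attained on the two vertical sides and is bounded by $K\, e^{\delta (b-a)^2/4}$; sending first $T \to \infty$ and then $\delta \to 0$ yields $|h| \le K$ throughout $S$, closing the argument.
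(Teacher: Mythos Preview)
Your argument is the standard Hadamard three-lines proof and is correct. Note, however, that the paper does not actually prove this lemma: it is stated as a well-known Phragm\'en--Lindel\"of type fact with a reference to Besicovitch \cite[p.~138]{Be54}, and no proof is given. So there is nothing to compare against; you have simply supplied the classical argument that the paper chose to cite.

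One small caveat worth recording: your maximum-modulus step on finite rectangles tacitly uses continuity of $h_\delta$ up to the vertical boundary lines $\re s = a$ and $\re s = b$. When $a > \sigma_1$ and $b < \sigma_2$ this is automatic, since $g$ is holomorphic in a neighbourhood of the closed substrip, and this already yields log-convexity of $L_g$ on the open interval $(\sigma_1,\sigma_2)$. Extending the inequality to the endpoints $\sigma_1, \sigma_2$ requires continuity of $g$ on the closed strip (which is the hypothesis in the standard formulation, and is satisfied in the paper's application of the lemma), so you may want to add that assumption explicitly or note the limiting argument.
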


\begin{proof}[Proof of Theorem~\ref{new}] Define the function
\[
g(s) = \frac{f(s+\varepsilon)}{(1 +s)^\ell}\,,
\]
and for each $x$
\[
g_{x}(s) = \frac{\sum_{\lambda_{n}<x}a_{n} \big(1-\frac{\lambda_{n}}{x}\big)^{k}e^{-s \lambda_{n}}}{(1 +s)^\ell}=\frac{R_{x}^{\lambda,k}(f)(s)}{(1+s)^{\ell}} \,,
\]
which are both obviously holomorphic and bounded on $[\re > 0]$.
Note now that $L_{g_{x}}(\sigma)$ for each $x>0$  is non-increasing for  $\sigma >0$; indeed,
by  Lemma~\ref{doetsch} we know that $\log L_{g_x}(\sigma)$ is convex and  moreover $\log L_{g_x}(\infty) = -\infty$.
But since Corollary~\ref{41} implies that for every $u >0$
\[
g_{x} \to g \,\,\, \text{ uniformly on  $[\re >  u]$ as $x\to \infty$}\,,
\]
we see that $L_{g}(\sigma)$ is non-increasing for $\sigma >0$. This   gives
\[
\|f\|_{\infty,\ell}
=
\sup_{\sigma >0}
   \big| L_{g_{x}}(\sigma) \big|=\lim_{\sigma \to 0}
   \big| L_{g_{x}}(\sigma)\big|\,,
\]
which is exactly what we were looking for.
\end{proof}

\bigskip

\subsection{Behavior far  right}

In contrast to what we did in Section~\ref{far left} we now prove in which sense the growth of functions $f\in H_{\infty,\ell}^\lambda [\re >0]$
subsides on abscissas far right.

\smallskip

\begin{Theo} \label{Theo2} Let $\ell \ge 0$ and $\lambda=(\lambda_{n})$ a frequency with $\lambda_1 >0$. Then for every $f\in H_{\infty,\ell}^\lambda [\re >0]$ we have
\begin{equation*}
\lim_{ \sigma \to \infty} \sup_{s\in [\re>\sigma]}\frac{|f(s)|}{(1+|s|)^{\ell}}
=\lim_{ \sigma \to \infty} \sup_{\substack{\delta\ge \sigma\\ t \in \R}}\frac{|f(\delta + it)|}{(1+|t|)^{\ell}}
=0.
\end{equation*}
\end{Theo}

\begin{proof}
Observe, that  it suffices to check the equality for the second limit. Moreover recall that by Corollary \ref{Theo41reproved} the Dirichlet series $D=\sum a_{n}e^{-\lambda_{n}s}$ associated to $f$ is $(\lambda,k)$-Riesz summable on $[\re>0]$ for every $k>\ell$. Hence, fixing $k > \ell$, $t\in \R$ and $\sigma >0$, we apply Theorem~\ref{corona2} to the translation $\sum a_{n}e^{-\lambda_{n}it} e^{-\lambda_{n}s}$ and obtain for $\delta>\sigma$
\begin{equation*}
f(\delta+it)= \frac{\delta^{1+k}}{\Gamma(1+k)}\int_{\lambda_{1}}^{\infty} e^{-\delta y} S_{y}^{\lambda,k}(D)(it) dy.
\end{equation*}
Hence with Theorem \ref{T1}
\begin{align*}
&
\Gamma(1+k)|f(\delta+it)|\leq \delta^{1+k} \int_{\lambda_{1}}^{\infty} e^{-\delta y} y^{k} \Big|R_{y}^{\lambda,k}(D)(it)\Big| dy
\\ &
\le
\|f\|_{\infty,\ell}(1+|t|)^{\ell} \delta^{1+k}\int_{\lambda_{1}}^{\infty}  e^{-\delta y}  y^{k} dy
\\ &
\le
\|f\|_{\infty,\ell}(1+|t|)^{\ell} \delta\int_{\lambda_{1}}^{\infty}  e^{-\delta y} (\delta y)^{k} dy
\\ &
\le
\|f\|_{\infty,\ell}(1+|t|)^{\ell} \int_{\delta \lambda_{1}}^{\infty}  e^{- y} y^{k} dy\le \|f\|_{\infty,\ell}(1+|t|)^{\ell} \int_{\sigma \lambda_{1}}^{\infty}  e^{- y} y^{k} dy \,,
\end{align*}
and so
\[
\lim_{ \sigma \to \infty} \,\,\sup_{\substack{\delta\ge \sigma\\ t \in \R}}\frac{|f(\delta +it)|}{(1+|t|)^{\ell}}
\leq
\frac{\|f\|_{\infty,\ell}}{\Gamma(1+k)}
\lim_{ \sigma \to \infty} \,\,
\int_{\sigma \lambda_{1}}^{\infty}  e^{-y} y^{k} dy =0\,.\qedhere
\]
\end{proof}

\smallskip

With a similar technique we obtain the following lemma -- needed for the proof of Theorem~\ref{Theoremfiniteorder} below.

\smallskip

\begin{Lemm} \label{Lemma2} Given a frequency $\lambda$ with  $\lambda_{1}=0$, let $f$ be the limit function of $D=\sum a_{n} e^{-\lambda_{n}s}$ with
$\sigma_{c}^{\lambda,m}(D)\in \mathbb{R}$ for some $m\ge 0$. Then for every $\gamma>0$
$$\lim_{ \substack{\sigma \to \infty\\ \sigma >\sigma_{c}^{\lambda,m}(D)} }\,\, \sup_{\substack{s\in [\re\ge\sigma]\\ |Im(s)|\le \gamma}}|f(s)-a_{1}|=0.$$
\end{Lemm}

\begin{proof} We may assume that $a_{1}=0$.
Fix some $\sigma_{0}>\sigma_{c}^{\lambda,m}(D)$ and $\gamma >0$. Then by Theorem~\ref{corona2} (second part) the $(\lambda,m)$-Riesz means of $D$ converge uniformly on $\sigma_{0}~+~i[-\gamma,\gamma]$, hence
$$ \sup_{|t|\le \gamma} |R_{y}^{\lambda,m}(D)(\sigma_0+it)|=C(\sigma_{0}, \gamma)<\infty.$$
 Given $|t|> \gamma$, we  apply  (the integral formula from) Theorem~\ref{corona2}
to the translation $D_{\sigma_0 + it}$: For  all $\sigma >0 $
\begin{equation*}
f(\sigma_0 + \sigma + it)= \frac{\sigma^{1+m}}{\Gamma(1+m)}\int_{\lambda_{2}}^{\infty} e^{-\delta y} y^{1+m} R_{y}^{\lambda,m}(D_{\sigma_0})(it) dy.
\end{equation*}
Now we follow the esimates  of the proof of Theorem \ref{Theo2}. Replacing the expression $\|f\|_{\infty,\ell}(1+|t|)^{\ell}$ by $C(\sigma_{0}, \gamma)$, we conclude
\begin{equation*}
\lim_{ \sigma \to \infty} \sup_{\substack{s\in [\re\ge\sigma]\\ |Im(s)|\le \gamma}}|f(s)|\le \frac{C(\sigma_{0}, \gamma)}{\Gamma(1+m)} \lim_{\sigma\to \infty}\int_{\lambda_{2}\sigma}^{\infty} e^{-y} y^{m} dy=0. \qedhere
\end{equation*}
\end{proof}

\bigskip

\subsection{Finite order}
In \eqref{finiteorderdefintroU} we defined what it means that a holomorphic function $f:[\re>0] \to \mathbb{C}$
has finite uniform order $\nu_f$ (see also  \cite[p.14]{HardyRiesz}) The following result shows that our new scale
of Banach spaces is intimately linked with this notion.

\begin{Theo} \label{Theoremfiniteorder}
Let $\lambda$ be a frequency, $\ell \ge 0$, and $f\colon [\re>0] \to \C$  a holomorphic function which has a Riesz germ.
\begin{itemize}
\item[(1)]
If $f\in H_{\infty,\ell}^{\lambda}[\re>0]$, then there is $C >0$ such that for all $\sigma >0$ and all~$|t| >1$
\[
|f(\sigma + it)|\leq C |t|^\ell.
\]
In particular, if  $f\in H_{\infty,\ell}^{\lambda}[\re>0]$, then $f$ has finite uniform order $\nu_f \leq \ell$ on $[\re >0]$.
\smallskip
\item[(2)]
Conversely, $f_{\mu}=f(\mu+ \cdot)\in H_{\infty,\ell}^{\lambda}[\re>0]$
for all $\mu >0$ whenever
 there are $C,t_0 >0$
such that for all $\sigma >0$ and all $|t| >t_0$
\[
|f(\sigma + it)|\leq C |t|^\ell\,,
\]
so in particular if
$f$ has finite uniform order $\nu_f < \ell$  on $[\re >0]$.
\end{itemize}
\end{Theo}

\smallskip

The proof  relies on Lemma~\ref{Lemma2} and the following lemma.
\begin{Lemm} \label{coro}
 Let $\ell \ge 0$ and $\lambda$ an arbitrary frequency. Then for every $f\in H_{\infty,\ell}^\lambda [\re >0]$ we have
 \[\sup_{\substack{\sigma > 0\\ t \in \R}}\frac{|f(\sigma+it)|}{(1+|t|)^{\ell}} < \infty\,.\]
\end{Lemm}
\begin{proof} Since $|a_{1}|\le C \|f\|_{\infty,\ell}$ by Theorem~\ref{boundednesscoeff}, we may assume that $\lambda_{1}>0$.
Note first that by Theorem~\ref{Theo2} there is some $\sigma_0 >0$ such that
\[
\sup_{\substack{\sigma > \sigma_0\\ t \in \R}}\frac{|f(\sigma+it)|}{(1+|t|)^{\ell}} \leq 1\,.
\]
On the other hand, for $\sigma \leq \sigma_0$ and $t \in \R$ we have that
\[
\frac{|f(\sigma+it)|}{(1+|t|)^{\ell}}
=
\frac{|f(\sigma+it)|}{(1+|\sigma+it|)^{\ell}} \frac{(1+|\sigma+it|)^{\ell}}{(1+|t|)^{\ell}}
\leq
\frac{|f(\sigma+it)|}{(1+|\sigma+it|)^{\ell}}
( 1 +  \sigma_0)^\ell\,. \qedhere
\]
\end{proof}

\bigskip

\begin{proof}[Proof of Theorem \ref{Theoremfiniteorder}]
(1)  Let $f\in H_{\infty,\ell}^{\lambda}[\re>0]$. Then by Lemma~\ref{coro} we know that
\begin{equation*} \label{def1}
\exists~ R>0 ~ \forall ~ \sigma >0, ~ t \in \mathbb{R} \colon |f(\sigma+it)|\le R(1+|t|)^{\ell}.
\end{equation*}
Then $f$ satisfies \eqref{finiteorderdefintroU} with $t_0 =1$ and $C=R2^{\ell}$, since $(1+|t|)^{\ell}\le 2^{\ell} |t|^{\ell}$ for all $|t|\ge 1$.

(2) We assume that there are $C,t_0 >0$ such that for all $\sigma >0$
and all $|t| > |t_0|$
\[
|f(\sigma+it)|\le C|t|^{\ell} \le C(1+|t|)^{\ell}\,.
\]
Fix some $\mu >0$, and assume without loss of generality that $\lambda_{1}=0$. Then we know from Lemma~\ref{Lemma2} (with $\gamma=t_{0}$) that there is some $\sigma_{0}>\mu$ with $|f(\sigma+it)|\le C$ for all $\sigma>\sigma_{0}$ and $|t|\le t_{0}$. Since the continuous function $f_\mu$ is bounded on the rectangle $[\mu, \sigma_0]\times [-t_0, t_0]$,
 we have that
\begin{align*}
\|f_\mu\|_{\infty, \ell}
\leq \sup_{\substack{\sigma > 0\\ |t|\ge t_{0}}}\frac{|f(s)|}{|(1+s)^{\ell}|}+\sup_{\substack{\sigma_{0}>\sigma > \mu\\ |t|\le t_{0}}}\frac{|f(s)|}{|(1+s)^{\ell}|}+\sup_{\substack{\sigma > \sigma_{0}\\ |t|\le t_{0}}}\frac{|f(s)|}{|(1+s)^{\ell}|},
\end{align*}
which then is finite.
\end{proof}

\bigskip

\subsection{Equivalence} \label{SectionCoincidence}

Recall that  $\lambda$-Dirichlet series $D=\sum a_n e^{-\lambda_n s}$ converge on half-planes in the complex plane, where they define  holomorphic functions, and in general the largest possible  half-planes of convergence, uniform convergence, or absolute convergence differ.

In the early days of the theory, a very prominent research problem  was to characterize the class of those frequencies $\lambda$ for which  boundedness of the limit function of $D$ on  $[\re>0]$
implies uniform convergence of $D$ on every smaller half plane $[\re>\varepsilon]$. In other words, if  $\mathcal{D}_{\infty}(\lambda)$ stands for all $\lambda$-Dirichlet series $D=\sum a_{n}e^{-\lambda_{n}s}$ that  on  $[\re>0]$ converges to a  bounded
(holomorphic)  function, then the question is whether all these series even converges uniformly on all smaller half-planes $[\re >\varepsilon]$ .

Bohr  was the first who in \cite{Bohr} managed to isolate a prominent class of such $\lambda$'s, namely those for which there is some $\beta>0$ such that
\begin{equation} \label{BC}
\frac{1}{\lambda_{n+1}-\lambda_n} =O(e^{\beta \lambda_n} )\,; \tag{BC}
\end{equation}
slightly more precise, we in this case say that $\lambda$ satisfies \eqref{BC} with constant $\beta$.

We  say that 'Bohr's theorem holds for $\lambda$', whenever every $D\in \mathcal{D}_{\infty}(\lambda)$ converges uniformly on $[\re>\varepsilon]$ for every $\varepsilon>0$. In particular, frequencies $\lambda$ with \eqref{BC} satisfy Bohr's theorem.
 Later this result was extended by Landau in \cite{Landau}
assuming the less restrictive assumption
\begin{equation} \label{(LC)}
\forall ~\delta>0 ~\exists ~C>0~ \forall ~n  \colon ~~ \lambda_{n+1}-\lambda_{n}\ge C e^{-e^{\delta\lambda_{n}}}\,, \tag{LC}
\end{equation}
and very recently, Bayart in \cite{Ba20} added a further interesting condition, which provides a nontrivial extension of \eqref{(LC)}: A frequency
$\lambda$ satisfies $(NC)$ if
\begin{equation} \label{(NC)}
\forall ~\delta>0 ~\exists ~C>0~ \forall ~ m> n \colon ~~
\log\bigg(\frac{\lambda_m +\lambda_n}{\lambda_m -\lambda_n}   \bigg) + (m-n)\leq C e^{\delta \lambda_n}  \,. \tag{NC}
\end{equation}
Examples show that any concrete $\lambda$ may or may not satisfy Bohr's theorem, and we refer to \cite{Ba20} and \cite{schoolmann2018bohr}
for detailed information on all this.

In this context, an important  recent achievement  is   that the property '$\lambda$ satisfies Bohr's theorem' in fact
is equivalent to a number of facts which  seem absolutely unavoidable for a reasonable abstract theory
of general Dirichlet series. The following so-called 'equivalence theorem' is taken from \cite[Theorem 4.6]{CaDeMaSc_VV}) (in fact, the main part of this result was proved earlier in \cite[Theorem~2.16]{defant2020riesz} and \cite[Theorem~5.1]{defant2020variants}).
\smallskip

 \begin{Theo} \label{equivalence}
Let $\lambda$ be a frequency. Then the following are equivalent:
\begin{itemize}
\item[(1)] $\lambda$ satisfies Bohr's theorem.
\item[(2)] $\mathcal{D}_{\infty}(\lambda)$ is a Banach space.
\item[(3)] $\mathcal{D}_{\infty}(\lambda)=\mathcal{H}_{\infty}(\lambda)$ isometrically and coefficient preserving.
\item[(4)] $\mathcal{D}_{\infty}(\lambda)= H_{\infty}^{\lambda}[\re>0]$ isometrically and coefficient preserving.
\item[(5)] $\lambda$ satisfies Montel's theorem in $\mathcal{D}_{\infty}(\lambda)$\,.
\end{itemize}
\end{Theo}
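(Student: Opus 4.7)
The plan is to close the chain of implications $(1) \Rightarrow (4) \Rightarrow (3) \Rightarrow (2) \Rightarrow (5) \Rightarrow (1)$, with most of the real work concentrated at the two ends. The middle implications are either formal (one is a direct reformulation of \eqref{periodic-coin}) or rely directly on Montel-type results already established in Section~\ref{structuresection}.

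For $(1) \Rightarrow (4)$ I would take $D = \sum a_n e^{-\lambda_n s} \in \mathcal{D}_\infty(\lambda)$ with limit $f$. By Bohr's theorem, the partial sums of $D$ converge uniformly on every $[\re > \varepsilon]$, so each restriction $t \mapsto f(\sigma + it)$ is a uniform limit of trigonometric polynomials with frequencies in $\{\lambda_n\}$, hence uniformly almost periodic, with Bohr coefficients $a_n e^{-\lambda_n \sigma}$; this places $f$ in $H_\infty^\lambda[\re > 0]$ and makes $\|D\|_\infty = \|f\|_\infty$ obvious. Conversely, for $f \in H_\infty^\lambda[\re > 0] = H_{\infty,0}^\lambda[\re > 0]$ (by Corollary~\ref{oldcase}) the series $D = \sum a_n(f) e^{-\lambda_n s}$ has, by \eqref{periodic}, $(\lambda,1)$-Riesz means converging uniformly to $f$ on every $[\re > \varepsilon]$; a second invocation of (1) on suitable translations promotes Riesz convergence to ordinary uniform convergence of the partial sums, so that $D \in \mathcal{D}_\infty(\lambda)$. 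The step $(4) \Rightarrow (3)$ is immediate from the identification \eqref{periodic-coin}, and $(3) \Rightarrow (2)$ follows because $\mathcal{H}_\infty(\lambda)$ is a closed subspace of $L_\infty(G)$, hence Banach, and isometries transport completeness.

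For $(2) \Rightarrow (5)$: given a bounded sequence $(D_N) \subset \mathcal{D}_\infty(\lambda)$ whose Bohr coefficients $a_n(D_N)$ converge as $N \to \infty$ for every $n$, view $(D_N)$ inside $H_{\infty,0}^\lambda[\re > 0]$ via Corollary~\ref{oldcase} and apply Theorem~\ref{generalMontel}; this yields $f \in H_\infty^\lambda[\re > 0]$ with $D_N(u + \cdot) \to f(u + \cdot)$ in sup-norm for each $u > 0$. Assumption (2) then forces this limit to actually live in $\mathcal{D}_\infty(\lambda)$, which is precisely the statement of Montel's theorem for $\mathcal{D}_\infty(\lambda)$.

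The delicate step is $(5) \Rightarrow (1)$. First I would deduce completeness of $\mathcal{D}_\infty(\lambda)$ from (5): a Cauchy sequence in $\mathcal{D}_\infty(\lambda)$ is bounded and has Cauchy Bohr coefficients, because the coefficient functionals are continuous by Theorem~\ref{boundednesscoeff}, so Montel supplies a limit that lives in $\mathcal{D}_\infty(\lambda)$. I would then combine this completeness with the uniform maximal bound $\sup_x \|R_x^{\lambda,1}(D)\|_\infty \leq C \|D\|_\infty$ from Theorem~\ref{T1} (applied with $\ell = 0$ and $k = 1$) and the uniform Riesz approximation of Theorem~\ref{Perronestimate} to conclude that, for every $D \in \mathcal{D}_\infty(\lambda)$ and every $u > 0$, the translated Riesz means converge uniformly to the limit function on $[\re \geq u]$; a density argument against Dirichlet polynomials, together with Remark~\ref{basic}(iii), would then promote Riesz convergence to ordinary uniform convergence of the partial sums on $[\re \geq u]$, which is Bohr's theorem. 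The main obstacle is precisely this bridging of Riesz summability (which Section~\ref{structuresection} provides in abundance) to ordinary uniform convergence of partial sums, and it is here that completeness, the maximal inequality, and Abel-summation-type estimates in the spirit of Theorem~\ref{corona2.00} have to be combined carefully.
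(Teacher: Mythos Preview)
The paper does not give its own proof of Theorem~\ref{equivalence}; it is quoted from \cite[Theorem~4.6]{CaDeMaSc_VV} (with the main parts going back to \cite{defant2020riesz} and \cite{defant2020variants}), and the surrounding text offers only commentary. So there is no in-paper proof to compare against, and your proposal is really an attempt to reconstruct a proof from the machinery of Section~\ref{structuresection}.

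Your middle implications $(4)\Rightarrow(3)\Rightarrow(2)$ and $(2)\Rightarrow(5)$ are fine (the latter via Theorem~\ref{generalMontel} together with completeness, essentially as in the proof of Theorem~\ref{equiv}). The two endpoints, however, share the same genuine gap: the passage from Riesz summability to \emph{ordinary} convergence of partial sums. In your $(1)\Rightarrow(4)$, the phrase ``a second invocation of (1) on suitable translations'' does not do what you want: Bohr's theorem is a statement about series that \emph{already} converge on $[\re>0]$, so it cannot be used to upgrade Riesz summability of the germ of an $f\in H_\infty^\lambda[\re>0]$ to ordinary convergence of a series not yet known to converge anywhere. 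In your $(5)\Rightarrow(1)$ you yourself flag this as ``the main obstacle'', correctly: Remark~\ref{basic}(iii) only controls partial sums multiplied by the factor $\big(\tfrac{\lambda_{N+1}-\lambda_N}{\lambda_{N+1}}\big)^k$, which may tend to zero, and a density argument against Dirichlet polynomials together with the maximal inequality of Theorem~\ref{T1} again yields only uniform Riesz approximation, not uniform convergence of the partial sums.

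This is precisely why the paper imports Theorem~\ref{equivalence} rather than deriving it: the tools built here (Theorems~\ref{T1}, \ref{Perronestimate}, \ref{generalMontel}) control Riesz means, and the in-house analogue Theorem~\ref{equiv} accordingly replaces ``Bohr's theorem'' by ``the germ is $(\lambda,\ell)$-Riesz summable on $[\re>0]$'', thereby sidestepping the issue. Closing the gap for $\ell=0$ --- i.e.\ passing from Riesz summability to ordinary uniform convergence --- requires input from the cited references (for instance via the identification $\mathcal{H}_\infty(\lambda)=H_\infty^\lambda[\re>0]$ and the Fourier-analytic description of $\mathcal{H}_\infty(\lambda)$) that is not developed in this paper.
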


\smallskip

Let us take some time to explain this result in  more detail.
Note  first  that $\mathcal{D}_{\infty}(\lambda)$
endowed with the sup norm $\|D\|_\infty = \sup_{\re >0} |f(s)|$, where $f$ denotes the limit function of $D$, forms a normed space. But  unfortunately
this space in general is not complete, and so the  theorem ensures  that  this only holds true if  $\lambda$ satisfies Bohr's theorem -- in particular under each of the conditions of Bayart, Bohr, or Landau.

 To see a non-trivial  example, $(\sqrt{\log n})$ doesn't satisfy $(BC)$ but $(LC)$. Hence,  Bohr's theorem holds for $(\sqrt{\log n})$, and so $\mathcal{D}_{\infty}((\sqrt{\log n }))$ forms  a Banach space -- a seemingly non-trivial fact.

  An important step  is to understand that  the Banach space
$H_{\infty}^{\lambda}[\re>0]$ always contains the (in general non-complete) space $\mathcal{D}_{\infty}(\lambda)$ isometrically
(see \cite[Proposition 3.4]{schoolmann2018bohr}). Hence  both spaces  in general differ, but in view of the theorem they coincide iff Bohr's theorem holds true for $\lambda$.

Moreover, as already mentioned in Section~\ref{monti}, Bayart proved a 'Montel type theorem' for $\mathcal{D}_{\infty}( (\log n))
=\mathcal{H}_{\infty}( (\log n))$, which turned out to be  a corner stone of the ordinary theory.  The
equivalence theorem shows that the analog  result holds true for $\mathcal{D}_{\infty}( \lambda)$
if and only if $\lambda$ satisfies Bohr's theorem.

\smallskip

The idea now is to extend parts of Theorem~\ref{equivalence} to the new setting.
Given a frequency $\lambda$ and $\ell \ge 0$, let  $$\mathcal{D}_{\infty,\ell}(\lambda)$$ denote the space of all $\lambda$-Dirichlet series
$D = \sum a_n e^{-\lambda_n s}$ that are $(\lambda,\ell)$-Riesz summable on all of $[\re>0]$ and  have limit functions  $f\in H_{\infty,\ell}^{\lambda}[\re>0]$. Together with the norm $\|D\|_{\infty,\ell}=\|f\|_{\infty,\ell}$, this leads to another scale
of normed spaces of $\lambda$-Dirichlet series.

Recall again from Corollary~\ref{Theo41reproved} that the $\lambda$-Riesz germ of each $f\in H_{\infty,\ell}^{\lambda}[\re>0]$
is $(\lambda,k)$-Riesz summable for each $k > \ell$, and so one of the questions in the following will be, under with additional assumptions on $\lambda$, Corollary~\ref{Theo41reproved} even holds for
$k = \ell$.

As mentioned  above, $\mathcal{D}_{\infty}(\lambda)$ is always an isometric subspace
of $H_{\infty}^{\lambda}[\re>0]$, hence in the particular case $\ell =0$ we by definition have the isometric equality
\begin{equation}\label{racci}
  \mathcal{D}_{\infty,0}(\lambda)=\mathcal{D}_{\infty}(\lambda)\,.
\end{equation}
Together with Corollary~\ref{oldcase} we see that  Theorem~\ref{equivalence} settles the case $\ell =0$.

For $\ell >0$ the following result partly serves as a sort of substitute of  Theorem~\ref{equivalence}.

\smallskip

\begin{Theo}\label{equiv} Let $\lambda$ be a frequency and $\ell\ge 0$. Then the following are equivalent:
\begin{itemize}
\item[(1)]  $\mathcal{D}_{\infty,\ell}(\lambda)$ satisfies Montel's theorem.
\item[(2)] $\mathcal{D}_{\infty,\ell}(\lambda)$  is a Banach space.
\item[(3)] The $\lambda$-Riesz germ  of every $f \in H_{\infty,\ell}^{\lambda}[\re>0]$ is $(\lambda,\ell)$-Riesz summable on $[\re > 0]$.
\item[(4)] $\mathcal{D}_{\infty,\ell}(\lambda)=H_{\infty,\ell}^{\lambda}[\re>0]$ holds isometrically and coefficient preserving.
\end{itemize}
\end{Theo}

\begin{proof}
If  $\mathcal{D}_{\infty,\ell}(\lambda)$ satisfies Montel's theorem, then the same proof as for Theorem~\ref{completeness} shows that $\mathcal{D}_{\infty,\ell}(\lambda)$ is complete, so we have that $(1) \Rightarrow (2)$. Let us show that $(2) \Rightarrow (4)$, and fix some  $f\in H_{\infty,\ell}^{\lambda}[\re>0]$ with  associated $\lambda$-Riesz germ~$D$.
By Corollary~\ref{41} we know that for every $u >0$ and every $k >\ell$
\[
\text
{
$\lim_{x \to \infty}R_{x}^{\lambda,k}(f)(u + \pmb{\cdot}) = f(u + \pmb{\cdot})$ in $H_{\infty,\ell}^{\lambda}[\re>0]$.
}
\]
 But since $$R_{x}^{\lambda,k}(D_u)=R_{x}^{\lambda,k}(f)(u + \pmb{\cdot}) \in \mathcal{D}_{\infty,\ell}(\lambda)\,,$$
  where  $D_u = \sum a_n e^{-\lambda_n u}e^{-\lambda_n s} \in \mathcal{D}_{\infty,\ell}(\lambda)$ as usual denotes the translation of $D$ about $u >0$,  we  eventually see that  $D \in \mathcal{D}_{\infty,\ell}(\lambda)$.  Hence $\mathcal{D}_{\infty,\ell}(\lambda)=H_{\infty,\ell}^{\lambda}[\re>0]$,
 the claim from $(4)$. The equivalence  $(3)\Leftrightarrow (4)$ is obvious, so  that it finally suffices to check that $(4) \Rightarrow (1)$. But if  $\mathcal{D}_{\infty,\ell}(\lambda)=H_{\infty,\ell}^{\lambda}[\re>0]$ holds true, then trivially by Theorem \ref{Montel} Montel's theorem holds in   $\mathcal{D}_{\infty,\ell}(\lambda)$.
\end{proof}

The case $\ell=0$ from the preceding theorem allows another  interesting   remark on Theorem~\ref{equivalence}.

\begin{Rema}
 Bohr's theorem is valid for $\lambda$  if and only if every Dirichlet series which for some $m \ge 0$ is somewhere $(\lambda,m)$-Riesz summable and has  a limit function that extends holomorphically to a bounded function on $[Re>0]$, converges uniformly on $[Re>\sigma]$ for every $\sigma>0$.
\end{Rema}

\smallskip
For concrete  frequencies $\lambda$, in particular $\lambda = (\log n)$, the following result is our main application of Theorem~\ref{equiv}.

\smallskip

\begin{Theo} \label{41improved}
Let $\ell>0$. If \eqref{BC} holds for $\lambda$, then the $\lambda$-Riesz germ $D$ of every $f \in H_{\infty,\ell}^{\lambda}[\re>0]$ is $(\lambda,\ell)$-Riesz summable on $[\re > 0]$. In particular, all equivalent statements from Theorem~\ref{equiv} hold.
\end{Theo}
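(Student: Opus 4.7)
The plan is to prove item~(3) of Theorem~\ref{equiv}: under (BC), the $\lambda$-Riesz germ $D=\sum a_n e^{-\lambda_n s}$ of every $f\in H_{\infty,\ell}^\lambda[\re>0]$ is $(\lambda,\ell)$-Riesz summable on the whole half-plane $[\re>0]$. The limit is then automatically $f$, since by Corollary~\ref{Theo41reproved} we already know $D$ is $(\lambda,k)$-Riesz summable to $f$ for every $k>\ell$, and the ``in particular'' part follows from Theorem~\ref{equiv}.

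By translation, namely replacing $f$ by $f(s_0+\cdot)\in H_{\infty,\ell}^\lambda[\re>0]$, it suffices to prove summability at the origin, i.e.\ that $R_x^{\lambda,\ell}(D)(0)$ converges as $x\to\infty$. Moreover, by the Bohr--Cahen formula (Theorem~\ref{Bohr-Cahen-Riesz}), once one establishes the maximal bound
\[
\sup_{x>0}\bigl|R_x^{\lambda,\ell}(D)(0)\bigr|<\infty\,,
\]
one obtains $\sigma_c^{\lambda,\ell}(D)\le 0$. The task therefore reduces to a critical-exponent analogue of the maximal inequality of Theorem~\ref{T1}.

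For this maximal bound I would combine three tools. First, Perron's formula (Theorem~\ref{new-ban}) at some supercritical order $k>\ell$ provides a manageable integral representation of $S_x^{\lambda,k}(f)(0)$. Second, Lemma~\ref{harrie} with $\mu=k-\ell$ relates $S_x^{\lambda,k}$ to a smoothing of $S_x^{\lambda,\ell}$, which one can try to invert to pass from control of the former to control of the latter. Third, Theorem~\ref{boundednesscoeff} combined with (BC) gives a coefficient bound of the form $|a_n(f)|\le C\,\lambda_{n+1}^{k}\,e^{k\beta\lambda_n}\,\|f\|_{\infty,\ell}$, so that $D$ converges absolutely on some half-plane $[\re>\gamma]$; this furnishes an honest Dirichlet-tail estimate that compensates for the Abel-type decay that was freely available in Theorem~\ref{T1} only while $k>\ell$.

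The main obstacle is precisely this critical-exponent step. For $k=\ell$ the naive Perron integrand $f(s)/s^{1+\ell}$ decays only as $|s|^{-1}$ on vertical contours, so absolute integrability is lost and the direct computation carried out in the proof of Theorem~\ref{T1} no longer closes. The key technical point will be to use the gap bound $\lambda_{n+1}-\lambda_n\ge C_\beta e^{-\beta\lambda_n}$ furnished by (BC) to absorb the missing factor, either through a contour deformation that threads between the exponents $\lambda_n$, or, more in the spirit of Hardy--Riesz, via a refined Abel summation on the Dirichlet polynomial $R_x^{\lambda,\ell}(D)(0)$ that exploits the gap structure to transfer the bound on the supercritical Riesz means down to order $\ell$. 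Once the maximal bound is in hand, Bohr--Cahen closes the argument and Theorem~\ref{equiv} delivers the remaining equivalent statements.
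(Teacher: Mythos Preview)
Your plan correctly identifies one of the key ingredients: combining Theorem~\ref{boundednesscoeff} with (BC) to obtain $\sigma_a(D)<\infty$. But the route you propose from there, namely proving a critical-exponent maximal bound $\sup_{x>0}|R_x^{\lambda,\ell}(D)(0)|<\infty$, is left genuinely open. You yourself flag that the Perron integrand at $k=\ell$ is only $O(|s|^{-1})$ on vertical lines, and the two fixes you suggest (contour deformation threading between the $\lambda_n$, or a refined Abel summation exploiting the gaps) are not carried out and it is not clear either one closes. Note also that the maximal bound you aim for is strictly stronger than $\sigma_c^{\lambda,\ell}(D)\le 0$: Bohr--Cahen with equality only gives that $|R_x^{\lambda,\ell}(D)(0)|$ is subexponential, not bounded, so you may be trying to prove more than is actually true.

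The paper sidesteps the critical exponent entirely. Once $\sigma_a(D)<\infty$ (your third ingredient, and the paper's Lemma~\ref{fridayB}), the limit function $f$ is \emph{bounded} on some half-plane $[\re>\sigma]$, so $f_\sigma\in H_{\infty,0}^\lambda[\re>0]$. Now apply a Phragm\'en--Lindel\"of interpolation (Lemma~\ref{christmas}, using \cite[Theorem~14]{HardyRiesz}): since $f$ has uniform order $\le\ell$ on $[\re>0]$ and order $0$ on $[\re>\sigma]$, on every intermediate line $[\re=\beta]$ with $\beta>0$ the order is some $\kappa(\beta)<\ell$. Hence $f(\beta+\cdot)\in H_{\infty,\kappa(\beta)}^\lambda[\re>0]$, and the already-proved supercritical result Corollary~\ref{Theo41reproved} with $k=\ell>\kappa(\beta)$ gives $(\lambda,\ell)$-summability at every point of $[\re>\beta]$. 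Letting $\beta\downarrow 0$ finishes. In short: rather than pushing the maximal inequality down to $k=\ell$, the paper pushes the \emph{order of $f$} strictly below $\ell$ on every interior line and reuses the supercritical case.
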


It seems very interesting to decide whether or not the preceding result even holds under the condition \eqref{(LC)} or more generally  \eqref{(NC)}.

\smallskip

 Divided into three lemmas of independent interest, the proof of Theorem~\ref{41improved} is given at the very end of this section.
 The first lemma gives a necessary condition  under which  $H_{\infty,\ell}^{\lambda}[\re>0]$ and $\mathcal{D}_{\infty,\ell}(\lambda)$ coincide, and is based on
a Phragmen-Lindel\"of type argument again borrowed from \cite{HardyRiesz}.

\smallskip

\begin{Lemm}\label{christmas} Let $\ell>0$ and $f\in H_{\infty,\ell}^{\lambda}[\re>0]$. Assume that there exist $\sigma_0>0$ and $0\le \ell^{\prime}<\ell$ such that $f_{\sigma_0} = f(\sigma_0 + \cdot)\in H_{\infty,\ell^{\prime}}^{\lambda}[\re>0]$. Then the
$\lambda$-Riesz germ $D$ of $f$ is $(\lambda,\ell)$-Riesz summable on $[\re > 0]$, i.e. $D\in \mathcal{D}_{\infty,\ell}(\lambda)$.
\end{Lemm}

\begin{proof} Fix $\varepsilon>0$. By Theorem~\ref{Theoremfiniteorder}, part (1)  there is $D_0>0$  such that for all
$\sigma >0$ and $|t|>1$
\begin{equation} \label{fo1}
  |f(\sigma +it)|\le D_0 |t|^{\ell}\,,
\end{equation}
and some $D_1>0$ such that for all
$\sigma > \sigma_0$ and $|t|>1$
\begin{equation}\label{fo2}
  |f(\sigma +it)|\le D_1 |t|^{\ell'}\,.
\end{equation}
Combining \eqref{fo1} and \eqref{fo2}
by  \cite[Theorem~14]{HardyRiesz} (another  Phragmen-Lindel\"of principle), there  are $C, t_0 >0$ such that for every $\varepsilon\le \sigma \le \sigma_0+1$
and every $|t|\ge t_0 $
\begin{equation}\label{fo3}
  |f(\sigma +it)|\le C |t|^{\kappa(\sigma)}\,,
\end{equation}
where $\kappa: [\varepsilon, \sigma_0+1]\to [0, \infty[$ is the affine linear function linking the points  $(\varepsilon,\ell)$ and $(\sigma_0 +1,\ell^{\prime})$.
We claim
that
\begin{equation*} \label{fo4}
  \text{$f(\varepsilon +\sigma+\cdot)\in H_{\infty,\kappa(\beta)}^{\lambda}[\re>0]$ for every $\sigma >0$\,,}
\end{equation*}
and we do this with Theorem~\ref{Theoremfiniteorder}, part (2) showing for all $\sigma >0$ and $|t| \ge  \max\{1,t_0\} $
\begin{equation}\label{fo53}
  |f(\varepsilon + \sigma +it)|\le \max\{C,D_1\} |t|^{\kappa(\sigma)}\,.
\end{equation}
 Indeed, since $\kappa\ge \ell'$ on $[\varepsilon, \sigma_0+1]$, this is immediate from \eqref{fo2} and \eqref{fo3}.

Since $\kappa<\ell$ on  $[\varepsilon, \sigma_0+1]$ (recall that $\ell' < \ell$), we by Corollary~\ref{Theo41reproved} obtain from \eqref{fo4} that $\sigma_{c}^{\lambda,\ell}(D_\varepsilon)\le 0$ for every $\varepsilon>0$, or equivalently $\sigma_{c}^{\lambda,\ell}(D)\le \varepsilon$ for every $\varepsilon>0$.
\end{proof}

\smallskip

The next tool, based on the preceding one,  is a reformulation of \cite[Theorem~44]{HardyRiesz}. For  the sake of completeness we include an argument within our new setting.
\smallskip

\begin{Lemm}\label{fridayA} Let $\ell>0$, and $f\in H_{\infty,\ell}^{\lambda}[\re>0]$. Then the
$\lambda$-Riesz germ $D$ of $f$ is $(\lambda,\ell)$-Riesz summable on $[\re > 0]$, i.e. $D\in \mathcal{D}_{\infty,\ell}(\lambda)$, provided $\sigma_{a}(D)  <\infty$.
\end{Lemm}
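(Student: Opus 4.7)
The plan is to derive the claim as a direct consequence of Lemma~\ref{christmas}: it suffices to exhibit some $\sigma>0$ for which the translation $f_{\sigma}=f(\sigma+\pmb{\cdot})$ lies in $H_{\infty,0}^{\lambda}[\re>0]$, since then applying that lemma with $\ell'=0<\ell$ delivers $(\lambda,\ell)$-Riesz summability of $D$ on $[\re>0]$, i.e.~$D\in \mathcal{D}_{\infty,\ell}(\lambda)$.

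First, I would fix some $\sigma>\max\{\sigma_{a}(D),0\}$. Absolute convergence of $D$ at $\sigma$ gives $\sum|a_{n}|e^{-\lambda_{n}\sigma}<\infty$, so the translated series $D_{\sigma}=\sum a_{n}e^{-\lambda_{n}\sigma}e^{-\lambda_{n}s}$ converges absolutely and uniformly on the closed half-plane $[\re\ge 0]$, and its pointwise sum
\[
g(s)=\sum_{n=1}^{\infty} a_{n} e^{-\lambda_{n}\sigma} e^{-\lambda_{n}s}
\]
is holomorphic on $[\re>\sigma_{a}(D)-\sigma]\supset [\re\ge 0]$ and is bounded there by $\sum|a_{n}|e^{-\lambda_{n}\sigma}$.

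Second, I would verify that $g=f_{\sigma}$ on $[\re>0]$. By hypothesis $D$ is $(\lambda,m)$-Riesz summable on some half-plane with limit $f$, hence $D_{\sigma}$ is $(\lambda,m)$-Riesz summable on a suitably shifted half-plane with limit $f_{\sigma}$. On the open overlap of that half-plane with $[\re>\sigma_{a}(D)-\sigma]$, the ordinary sum of $D_{\sigma}$ equals its $(\lambda,m)$-Riesz limit by Remark~\ref{basic}(i), so $g=f_{\sigma}$ on this nonempty open set; as both functions are holomorphic on $[\re>0]$, the identity theorem extends the equality to all of $[\re>0]$. In particular $f_{\sigma}$ is bounded on $[\re>0]$, and $D_{\sigma}$ -- converging ordinarily on $[\re\ge 0]$, hence being $(\lambda,0)$-Riesz summable there -- is a $\lambda$-Riesz germ of $f_{\sigma}$. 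Therefore $f_{\sigma}\in H_{\infty,0}^{\lambda}[\re>0]$, and Lemma~\ref{christmas} completes the proof.

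The only nontrivial bookkeeping is the identification $g=f_{\sigma}$: it hinges on having a nonempty overlap between the half-plane of absolute convergence of $D_{\sigma}$ (which by the choice $\sigma>\sigma_{a}(D)$ strictly contains $[\re\ge 0]$) and the a priori half-plane of $(\lambda,m)$-Riesz summability of $D_{\sigma}$, after which the identity theorem does the rest.
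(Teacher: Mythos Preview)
Your proof is correct and follows essentially the same route as the paper: use absolute convergence of $D$ to see that some translate $f_{\sigma}$ is bounded on $[\re>0]$, hence lies in $H_{\infty,0}^{\lambda}[\re>0]$, and then invoke Lemma~\ref{christmas} with $\ell'=0$. The only difference is presentational---the paper simply asserts $f(s)=\sum a_{n}(f)e^{-\lambda_{n}s}$ on $[\re>\sigma_{a}(D)]$ and reads off boundedness on $[\re>\sigma_{a}(D)+1]$, whereas you spell out the identification $g=f_{\sigma}$ via Remark~\ref{basic}(i) and the identity theorem; this extra care is sound but not a different argument.
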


\begin{proof}
Since $\sigma_{a}(D)<\infty$, we have that
$$f(s)=\sum_{n=1}^{\infty}a_{n}(f)e^{-\lambda_{n}s}, ~~ s\in [\re>\sigma_{a}(D)]\,,$$
and so we see that $f$ is bounded on $[\re>\sigma_{a}(D)+1]$. Hence Lemma~\ref{christmas}
with $\sigma_0=\sigma_{a}(D)+1$ and $\ell^{\prime}=0$ implies $D\in \mathcal{D}_{\infty,\ell}(\lambda)$.
\end{proof}

\smallskip

For the last lemma we need
further notation and information.
Given a frequency~$\lambda$,  define
\[
L(\lambda) := \sup_{D \in \mathcal{D}(\lambda)} \sigma_{a}(D)-\sigma_{c}(D)\,,
\]
where we recall that $\sigma_{c}(D)$  and $\sigma_{a}(D)$ define the abscissas of convergence and absolute convergence
(with respect to ordinary summation).
 Then straightforward arguments show that
\begin{equation} \label{Lstrip1vectorvalued}
L(\lambda)= \sigma_{c}\left(\sum e^{-\lambda_{n}s} \right)=
\sigma_{a}\left(\sum e^{-\lambda_{n}s} \right)  \,.
\end{equation}
With a less obvious arguments, Bohr in \cite[\S 3, Hilfssatz 4]{Bohr2} showed  that his condition \eqref{BC} implies   $L(\lambda) <~\infty$.
In passing we remark (although this  is not needed for what's coming) that by \cite[\S 3, Hilfssatz 4]{Bohr2}
\begin{equation} \label{Lstrip2vectorvalued}
L(\lambda) =\limsup_{N \to \infty} \frac{\log(N)}{\lambda_{N}}\,.
\end{equation}
It is evident that $L((n))=0$ and $L((\log n))=1$.

\smallskip

\begin{Lemm}\label{fridayB} Let $\ell>0$, and $f\in H_{\infty,\ell}^{\lambda}[\re>0]$ with its associated $\lambda$-Riesz germ~$D$. Then $\sigma_{a}(D)  \leq L(\lambda) + \beta \ell <\infty$, whenever $\lambda $ satisfies~\eqref{BC} with exponent~$\beta$.
\end{Lemm}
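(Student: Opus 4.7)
The strategy is to feed (BC) into the coefficient estimate from Theorem~\ref{boundednesscoeff}, and then to compare the resulting series term by term with $\sum e^{-\lambda_n s}$, whose absolute abscissa equals $L(\lambda)$ by \eqref{Lstrip1vectorvalued}. Since Bohr's result (cited just before the lemma) yields $L(\lambda)<\infty$ under (BC), once the abscissa bound $\sigma_a(D)\le L(\lambda)+\beta\ell$ is established, the finiteness assertion follows for free.

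First I would fix an arbitrary $k>\ell$ and apply Theorem~\ref{boundednesscoeff}, which gives
\begin{equation*}
|a_n(f)|\le C(k,\ell,\lambda_1)\,\|f\|_{\infty,\ell}\left[\left(\frac{\lambda_{n+1}}{\lambda_{n+1}-\lambda_n}\right)^{k}+\left(\frac{\lambda_n}{\lambda_n-\lambda_{n-1}}\right)^{k}\right].
\end{equation*}
Writing $\frac{\lambda_{n+1}}{\lambda_{n+1}-\lambda_n}=1+\frac{\lambda_n}{\lambda_{n+1}-\lambda_n}$ and invoking (BC) with constant $\beta$ to select $M>0$ with $\frac{1}{\lambda_{n+1}-\lambda_n}\le M e^{\beta\lambda_n}$, one obtains
\begin{equation*}
\frac{\lambda_{n+1}}{\lambda_{n+1}-\lambda_n}\le 1+M\lambda_n e^{\beta\lambda_n},
\end{equation*}
together with an analogous estimate for the second summand. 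This yields a constant $C'(k)=C'(k,\lambda,\ell)$ with
\begin{equation*}
|a_n(f)|\le C'(k)\,\|f\|_{\infty,\ell}\,\lambda_n^{k}e^{k\beta\lambda_n}
\end{equation*}
for all sufficiently large $n$.

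Next I would fix any $\sigma>L(\lambda)+k\beta$ and choose $\sigma'\in\bigl(L(\lambda),\sigma-k\beta\bigr)$, and then factor
\begin{equation*}
|a_n(f)|\,e^{-\lambda_n\sigma}\le C'(k)\,\|f\|_{\infty,\ell}\left(\lambda_n^{k}e^{-\lambda_n(\sigma-k\beta-\sigma')}\right)e^{-\lambda_n\sigma'}.
\end{equation*}
The parenthesised factor is bounded in $n$ (polynomial versus exponential in $\lambda_n\to\infty$, with $\sigma-k\beta-\sigma'>0$), while $\sum_n e^{-\lambda_n\sigma'}<\infty$ by \eqref{Lstrip1vectorvalued} since $\sigma'>L(\lambda)$. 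Therefore $D$ converges absolutely at $\sigma$, proving $\sigma_a(D)\le L(\lambda)+k\beta$. Since $k>\ell$ was arbitrary, letting $k\downarrow\ell$ delivers $\sigma_a(D)\le L(\lambda)+\beta\ell$.

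The one delicate place is the initial algebraic step: (BC) only controls the gap $\lambda_{n+1}-\lambda_n$ from below, not $\lambda_{n+1}$ from above, and a naive bound on $\lambda_{n+1}/(\lambda_{n+1}-\lambda_n)$ would insert an uncontrolled $\lambda_{n+1}$ into the final exponent. Rewriting this ratio as $1+\lambda_n/(\lambda_{n+1}-\lambda_n)$ eliminates $\lambda_{n+1}$ at the decisive place and keeps the exponential growth rate tied to $\lambda_n$; after that, the comparison with $\sum e^{-\lambda_n s}$ is entirely routine.
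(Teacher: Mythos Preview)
Your proof is correct and follows essentially the same route as the paper's: apply the coefficient bound from Theorem~\ref{boundednesscoeff} with some $k>\ell$, use (BC) to control the gap ratios, absorb the polynomial factor $\lambda_n^k$ into an arbitrarily small exponential, and compare with $\sum e^{-\lambda_n s}$ via \eqref{Lstrip1vectorvalued}. The only technical difference is in handling the appearance of $\lambda_{n+1}$ in the first ratio: the paper first refines $\lambda$ so that $\lambda_{n+1}-\lambda_n\le 1$ (whence $\lambda_{n+1}\le 1+\lambda_n$), whereas your algebraic rewriting $\frac{\lambda_{n+1}}{\lambda_{n+1}-\lambda_n}=1+\frac{\lambda_n}{\lambda_{n+1}-\lambda_n}$ avoids that reduction altogether and is arguably cleaner.
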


To see an example, we note that for $\lambda = (n)$ we get the  conclusion  $\sigma_{a}(D)\leq~0$, whereas for
$\lambda = (\log n)$ we obtain that $\sigma_{a}(D)  \leq  1 +  \ell$.

\begin{proof}
Condition \eqref{BC} for $\lambda$ with exponent $\beta$ means that  there is some $C >0$ such that for all $n$
\begin{equation*}
  \frac{1}{\lambda_{n+1}-\lambda_{n+1}} \leq Ce^{\lambda_n\beta}\,.
\end{equation*}
Adding more elements to $\lambda$, we assume  without loss of generality that $\lambda_{n+1}-\lambda_n \leq 1 $ for all $n$
(see the first part of the proof of \cite[Theorem~4.2]{schoolmann2018bohr} for details).
We now claim that the Bohr coefficients $(a_{n}(D))=(a_{n}(f))$ of $D$ for every $\delta >0$ and $k > \ell$ satisfy
\begin{align}\label{end}
A=\sup_{n}|a_{n}(f)e^{-(\beta k +\delta) \lambda_{n}}| < \infty\,.
\end{align}
Indeed, given $\delta >0$ and $k > \ell$, by  Theorem \ref{boundednesscoeff} we conclude that for  every $n$ and some constant
$C = C(k,\ell,\lambda_1)$
\begin{align*}
  |a_{n}(f)|
  &
  \le C
  \Big(\big(\frac{\lambda_{n+1}}{\lambda_{n+1}-\lambda_{n}}\big)^{k}+\big(\frac{\lambda_{n}}{\lambda_{n}-\lambda_{n-1}}\big)^{k} \Big) \|f\|_{\infty,\ell}
 \\&
 \le 2C
  \Big(\lambda_{n+1}^{k} e^{\beta k \lambda_n } + \lambda_{n}^{k}  e^{\beta k \lambda_{n-1} }\Big)\,\,
  \|f\|_{\infty,\ell}
  \\&
 \le 2C
  \Big((\lambda_{n+1})^{k} e^{\beta k \lambda_n }\Big)\,\,
  \|f\|_{\infty,\ell}
    \,.
\end{align*}
Hence, choosing  $D=D(\delta,k,\lambda) >0$ such that $(1+\lambda_n)^k\leq De^{\delta\lambda_n}$ for all $n$, we finally obtain \eqref{end}, namely that for all $n$
\[
|a_{n}(f)| \leq 2CD  e^{(\delta+\beta k) \lambda_n }
  \|f\|_{\infty,\ell}\,.
\]
 Then by \eqref{end} and by the fact that under Bohr's condition $L(\lambda)< \infty$, we see that  for every $\varepsilon >0$
 \begin{align*}
   \sum |a_n| e^{-((\beta k +\delta)+(L(\lambda)+\varepsilon )) \lambda_n}
    &
    = \sum |a_ne^{-(\beta k +\delta) \lambda_n}| e^{-(L(\lambda) +\varepsilon) \lambda_n}
    \\&
    \leq A \sum e^{-(L(\lambda) +\varepsilon) \lambda_n} < \infty\,.
 \end{align*}
Consequently,  $\sigma_{a}(D)\le (\beta k +\delta)+(L(\lambda)+\varepsilon )$, which is the conclusion
(whenever $k \to\ell$, $\delta \to 0$, and $\varepsilon \to 0$).
\end{proof}

\begin{proof}[Proof of Theorem~\ref{41improved}]
Applying first Lemma~\ref{fridayB} and then second Lemma~\ref{fridayA}, we finally obtain as desired  Theorem~\ref{41improved}.
\end{proof}

\section{\bf Appendix}   \label{appendix}
This section is devoted to the proofs of Theorem~\ref{basic},  Theorem~\ref{niceproof?}, Theorem~\ref{corona2},  and
Theorem~\ref{Bohr-Cahen-Riesz}. These are those results from \cite{HardyRiesz}, and  in a few cases even improvements
of them,  which were of central  importance for what we tried to explain in the  preceding sections. In fact, most of the proofs from  \cite{HardyRiesz} are not given in full detail and only sketches -- together with this appendix
our article is indeed self contained.

We need  a couple of technical lemmas  to properly prepare these proofs.
To do so, recall from  Section~\ref{Riesz means} the definition of summatory functions $S_x^{\lambda,r}(D)(s)$ with respect to a frequency $\lambda$, a Riesz
weight $r$, and a $\lambda$-Dirichlet series $D$.

\bigskip

\subsection{Integral forms} \label{integral forms}
Given a frequency $\lambda$ and a $\lambda$-Dirichlet series $D$, we in Section \ref{Riesz means} defined the summatory function $S^{\lambda,k}_{x}(D)$.
We start with the following integral form of this function, which, simple as it is, seems to be the seed for most of the coming arguments.

\begin{Lemm}\label{AbelfirstcaseB}
Let $D\in \mathcal{D}(\lambda)$, $r$ a Riesz weight, and  $x \geq 0$. Then for all $s,w \in \mathbb{C}$
\begin{equation} \label{Ostfriesland}
S^{\lambda,r}_{x}(D)(s+w)= -\int_{0}^{x} S^{\lambda,0}_{t}(D)(s)\Big( e^{-w \,\bullet} r(x,\bullet) \Big)'(t) dt\,,
\end{equation}
and in  particular,
\begin{equation} \label{A}
S^{\lambda,r}_{x}(D)(s)= -\int_{0}^{x} S^{\lambda,0}_{t}(D)(s)  r(x,\bullet)'(t)dt\,.
\end{equation}
Moreover, if $C = \sum a_n$ is a series and  $k \ge 0$, then for $r(x,t)=(x-t)^{k+1}$
\begin{equation}  \label{B}
\frac{d}{dt}S^{\lambda,k+1}_{t}(C)_{|t=x}=  (k+1)S^{\lambda,k}_{x}(C)\,.
\end{equation}
\end{Lemm}

\smallskip

\begin{proof}
  It suffices to do the proof for $s =0$. We write
  $S^{\lambda,r}_{x}(D)(w)$ as a Stieltjes integral,
  \[
  S^{\lambda,r}_{x}(D)(w) =
    \sum_{\lambda_n < x} a_n e^{-\lambda_n w} r(x,\lambda_n)
   =
  \int_{0}^{x} e^{-wt}r(x,t) d S^{\lambda,0}_{t}(D)(0)\,,
  \]
  and then \eqref{Ostfriesland}  follows by partial integration. Clearly, \eqref{A} is the special case $w=0$ in \eqref{Ostfriesland}. The proof of \eqref{B} follows applying \eqref{A} two times:
    \begin{align*}
  \frac{d}{dt} S_{t}^{\lambda,k+1}(D)
  &
  =
  \frac{d}{dt} \int_0^t S_{u}^{\lambda,0}(D) (k+1)(t-u)^k du
  \\&
  =
  (k+1) k \int_0^t S_{u}^{\lambda,0}(D) (t-u)^{k-1} du
    =
  (k+1) S_{t}^{\lambda,k}(D)\,. \qedhere
\end{align*}
\end{proof}

\smallskip

Recall that, the first statement of Theorem~\ref{corona2} claims  $(\lambda,k)$-Riesz summability of the $\lambda$-Dirichlet series on the half plane $[\re> \re~s_{0}]$, given $(\lambda,k)$-Riesz summability at $s_{0}$. Rougly speaking, the strategy for proving this claim is to substitute the expression $S^{\lambda,0}_{t}(D)(s)$ of \eqref{Ostfriesland} by $S^{\lambda,k}_{t}(D)(s)$. In other terms,  we want to increase the order of the summatory function of $f$ on the right hand side of \eqref{Ostfriesland} from zero to $k$.

 The following lemma is the first step in this direction --  partial integration allows to increase the order  zero to the order $m+1$, where  $m\in \N$ with $m<k\le m+1$. Then, for the case  $k \notin \N$, we   utilize the subsequent Lemma \ref{harrie} to drop the order to $k$ as desired.

\smallskip

\begin{Lemm} \label{Abelta}
Let $D\in \mathcal{D}(\lambda)$, $r$ a Riesz weight,  and  $m\in \N_{0}$. Then for all $s,w\in \C$ and all $x >0$
\begin{align*}
S_{x}^{\lambda,r}(D)(s+w)= -e^{-w x}S_{x}^{\lambda,r}(D)(s)+\frac{(-1)^{m}}{(m+1)!}\int_{0}^{x} S_{t}^{\lambda,m+1}(D)(s) \partial^{m+2}h_{w}(t) dt,
\end{align*}
where $h_{w}(t)= (e^{-w t}-e^{-w x})r(x,t)$.
In particular,
\begin{align} \label{finally}
\begin{split}
  S_{x}^{\lambda,r}(D)(s+w) &
 -
\frac{1}{\Gamma(m+2)}\int_{0}^{x} S_{t}^{\lambda,m+1}(D)(s) w^{m+2} e^{-wt} r(x,t)dt
\\&
= -e^{-wt}  S_{x}^{\lambda,r}(D)(s)
+  \int_0^x  S_{t}^{\lambda,m+1}(D)(s)  (g_1(t) + g_2(t)) dt \,,
           \end{split}
\end{align}
where
\begin{align*}
&
  g_1(t) = \sum_{j=1}^{m+1} \binom{m+2}{j} (-w)^j e^{-wt}  \,r(x,\bullet)^{(m+2-j)}(t)
  \\&
  g_2(t) = (e^{-wt} - e^{-wx}) \,r(x,\bullet)^{(m+2)}(t)
      \,.
\end{align*}
\end{Lemm}

\begin{proof}
 By partial integration
and  Lemma~\ref{AbelfirstcaseB} we  have
\begin{align*} \label{erst}
&
S^{\lambda,r}_{x}(D)(s+w)
\\&
= -\int_{0}^{x} S^{\lambda,0}_{t}(D)(s)\Big( e^{-w \,\bullet} \,r(x,\bullet) \Big)'(t) dt
\\&
=-e^{-wx} \int_{0}^{x} S_{t}^{\lambda,0}(D)(s) \,r(x,\bullet)^{\prime}(t) dt -\int_{0}^{x} S_{t}^{\lambda,0}(D)(s) \big((e^{-w\bullet}-e^{-wx})\,r(x,\bullet)\big)^{\prime}(t) dt.
\end{align*}
Using  \eqref{A} we know
\[
e^{-wx} \int_{0}^{x} S_{t}^{\lambda,0}(D)(s)\,r(x,\bullet)^{\prime}(t)  dt=e^{-wx} S_{x}^{\lambda,r}(D)(s)\,,
\]
and get
\begin{align*}
S^{\lambda,r}_{x}(D)(s+w)
=-e^{-wx} S_{x}^{\lambda,r}(D)(s) -\int_{0}^{x} S_{t}^{\lambda,0}(D)(s) \big((e^{-w\bullet}-e^{-wx})\,r(x,\bullet)\big)^{\prime}(t) dt.
\end{align*}
Finally, combining  partial integration and \eqref{B},  gives
 the first conclusion. For the proof of the second claim note that
\begin{align*}
\partial^{m+2}h_{w}(t)=&(-w)^{m+2} e^{-w t} r(x,t) + g_{1}(t)+g_{2}(t)\,,
\end{align*}
which finishes the proof.
\end{proof}

\smallskip
We go on analyzing the integral on the right side of the preceding lemma. We split it into  two summands --
one which will lead us to  the integral describing the  $(\lambda,k)$-Riesz sum in  Theorem~\ref{corona2}, and an other one  which we  in fact control whenever $x$ increases to infinity.

\smallskip

\begin{Lemm} \label{rekto}
Let $D\in \mathcal{D}(\lambda)$, $r$ a Riesz weight,  $k>0$ and $m\in \N_{0}$ with $m< k < m+1$. Then for all $s,w\in \C$ and all $x >0$
\begin{align*}
S_{x}^{\lambda,r}(D)(s+w) &
 -
\frac{1}{\Gamma(m+2)}\int_{0}^{x} S_{t}^{\lambda,m+1}(D)(s) w^{m+2} e^{-wt} r(x,t)dt
\\&
= -e^{-wt}  S_{x}^{\lambda,r}(D)(s)
\\&
\,\,\,\,\,\,\,
+  C(k) \int_0^x  S_{y}^{\lambda,k}(D)(s)  \int_y^x (t-y)^{m-k} (g_1(t) + g_2(t)) dt dy\,,
\end{align*}
where $g_1$ and $g_2$ are as in \eqref{Abelta} and
$  C(k)=(-1)^m \dfrac{1}{(m+1)\Gamma(1+k)\Gamma(1+m-k)}\,.$
\end{Lemm}

\smallskip

The proof of this lemma needs further independently useful preparation, and we postpone it to the next section.

\bigskip

\subsection{Changing orders} \label{order}

Given a frequency $\lambda$ and a series $C = \sum a_n$, we present  two devices which allow to increase or decrease the order $\kappa$ of a given  summatory function $S_x^{\lambda,\kappa}(C)$. Both lemmas are indispensable technical tools for the  proofs of Theorem~\ref{corona2}, Theorem~\ref{Bohr-Cahen-Riesz}, and Theorem~\ref{niceproof?}.

\smallskip

We begin collecting a few basic fact about the classical Gamma function given by
\begin{equation*}
\Gamma(z)= \int_{0}^{\infty} x^{z-1} e^{-x} dx,\,\,\, ~ \re ~z >0.
\end{equation*}
Recall that for every $z\in [\re>0]$
\begin{equation}
\Gamma(1+z)=\Gamma(z)z.
\end{equation}
Moreover, the  Beta function is given by
\begin{equation*}
B(p,q)=\int_{0}^{1} y^{p-1}(1-y)^{q-1} dy, \,\,\, ~ p,q\in [\re>0]\,,
\end{equation*}
which in terms of Gamma functions reads
\begin{equation}\label{betafunction}
B(p,q)=\frac{\Gamma(p)\Gamma(q)}{\Gamma(p+q)}\,.
\end{equation}
This shows that for $p,q > -1$ with $q>p$ and $x>0$
\begin{equation} \label{betafunctioncalc}
\int_{0}^{x}y^{p} (x-y)^{q-p-1} dy=x^{q} \frac{\Gamma(p+1)\Gamma(q-p)}{\Gamma(q+1)}.
\end{equation}
which indeed follows by a simple substitution:
\begin{align*}
\int_{0}^{x}y^{p} (x-y)^{q-p-1} dy& =x^{q}\int_{0}^{x} x^{-1} (\frac{y}{x})^{p}(1-\frac{y}{x})^{q-p-1} dy\\ &=x^{q} \int_{0}^{1} \alpha^{p} (1-\alpha)^{q-p-1} d\alpha = x^{q} \frac{\Gamma(p+1)\Gamma(q-p)}{\Gamma(q+1)}.
\end{align*}
We several times need \eqref{betafunctioncalc}  in combination   with the fact that   for arbitrary $c,d\in \mathbb{R}$, $w \in \mathbb{C}$ and $y < x$
\begin{align} \label{againagain}
\int_{y}^{x} e^{-w t} (t-y)^{c} (x-t)^{d} dt&= (x-y)^{c+d+1}e^{-wy} \int_{0}^{1} e^{-w\beta(x-y)} \beta^{c} (1-\beta)^{d} d \beta;
\end{align}
indeed,
\begin{align*}
\int_{y}^{x} e^{-w t} (t-y)^{c} (x-t)^{d} dt&= \int_{0}^{x-y} e^{-w(y+\alpha)} \alpha^{c} (x-y-\alpha)^{d} d\alpha\\ &=(x-y)^{c+d+1} \int_{0}^{1} e^{-w(y+\beta(x-y))} \beta^{c} (1-\beta)^{d} d \beta\\ &=(x-y)^{c+d+1}e^{-wy} \int_{0}^{1} e^{-w\beta(x-y)} \beta^{c} (1-\beta)^{d} d \beta.
\end{align*}

\smallskip
We are ready for the first lemma taken from \cite[Lemma~6]{HardyRiesz}. Following the idea of our article we  for the sake of completeness repeat its proof.
\smallskip

\begin{Lemm}\label{harrie}
Let $C=\sum a_n$  be a series, $\lambda$ a frequency, and $\kappa, \mu >0$. Then for all $x > 0$
\begin{equation*}
S_x^{\lambda,\kappa+\mu}(C)  =\frac{\Gamma(\kappa + \mu +1)}{\Gamma(\kappa + 1)\Gamma(\mu)}
\int_0^x S_u^{\lambda,\kappa}(C) (x-u)^{\mu-1}  du\,.
\end{equation*}
Moreover, if $\kappa >0$, $\mu <1$ and $\mu \leq \kappa$, then for all $x >0$
\[
S_x^{\lambda,\kappa-\mu}(C) = \frac{\Gamma(\kappa - \mu +1)}{\Gamma(\kappa + 1)\Gamma(1-\mu)}
\int_0^x \frac{d}{du} S_u^{\lambda,\kappa}(C) (x-u)^{-\mu}  du\,.
\]
\end{Lemm}

\smallskip

\begin{proof}
For the proof of the first formula we start with the integral on the left side, and use \eqref{A}, Fubini's theorem and \eqref{betafunctioncalc}
to obtain
\begin{align*}
&
\int_0^x S_u^{\lambda,k}(C) (x-u)^{\mu-1}  du
\\&
=
\int_0^x k \int_t^x S_t^{\lambda,0}(C) (u-t)^{k-1}  (x-u)^{\mu -1} du dt
\\&
=
k\int_0^x   S_t^{\lambda,0}(C)) \int_t^x (u-t)^{k-1}  (x-u)^{\mu -1} du dt
\\&
=
\frac{\Gamma(k )\Gamma(\mu)}{\Gamma(k + \mu) }
k\int_0^x   S_t^{\lambda,0}(C) (x-t)^{\mu +k -1} dt
=
\frac{\Gamma(k )\Gamma(\mu)}{\Gamma(k + \mu) }
\frac{k}{\mu +k}   S_t^{\lambda, \mu +k}(C)\,,
\end{align*}
the conclusion.
To prove the second formula we use \eqref{B}  and the first formula to see that  for all $x >0$
\begin{align*}
  S^{\lambda,\kappa -\mu}_x(C)
  &
   = \frac{1}{\kappa -\mu +1} \frac{d}{dx} S^{\lambda,\kappa +(1-\mu)}_x(C)
   \\&
  = \frac{\Gamma(\kappa - \mu +1)}{\Gamma(\kappa + 1)\Gamma(1-\mu)}
\frac{d}{dx}\int_0^x  S_u^{\lambda,\kappa}(C) (x-u)^{-\mu}  du\,.
\end{align*}
On the other hand, integrating by parts we have
\begin{align*}
  \int_0^x  S_u^{\lambda,\kappa}(C) (x-u)^{-\mu}  du
  =\frac{1}{1-\mu}
    \int_0^x  \frac{d}{du}S_u^{\lambda,\kappa}(C) (x-u)^{1-\mu}  du\,.
  \end{align*}
  For the desired formula we differentiate the right side with respect to $x$.
\end{proof}

\smallskip

As announced we use the preceding lemma to add the still missing proof  of Lemma~\ref{rekto}.

\smallskip

\begin{proof}[Proof of Lemma~\ref{rekto}]
  By Lemma~\ref{Abelta} for every $s,w\in \mathbb{C}$
\begin{align*}
S_{x}^{\lambda,r}(D)(s+w)= -e^{-w x}S_{x}^{\lambda,r}(D)(s)+\frac{(-1)^{m}}{\Gamma(m+2)}\int_{0}^{x} S_{t}^{\lambda,m+1}(D)(s) \partial^{m+2}h_{w}(t) dt,
\end{align*}
where $h_{w}(t)= (e^{-w t}-e^{-w x})r(x,t)$. We calculate
\begin{align*}
\partial^{m+2}h_{w}(t)=&(-w)^{m+2} e^{-w t} r(x,t) + g_{1}(t)+g_{2}(t).
\end{align*}
The aim is to replace the  summatory function of order $m+1$ by the  summatory function of order $k$. If $k=m+1$, then we are fine -- but if $k<m+1$, then we need
Lemma~\ref{harrie}  (with the choices $\mu=m-k+1$ und $\kappa =k$):
$$S_{t}^{\lambda,m+1}(D)(s)= C(m,k)  \int_{0}^{t} S_{y}^{\lambda,k}(D)(s)(t-y)^{m-k} dy\,,$$
where
\[
C(k)=\frac{\Gamma(1+m)}{\Gamma(1+k)\Gamma(1+m-k)}\,.
\]
For the rest of the proof we assume that  $k<m+1$; in fact, it is easier to handle the case $k=m+1$ following basically  the same lines.
We  apply Fubini's theorem and obtain for $i=1,2$
\begin{align*}
C(k)^{-1}&\int_{0}^{x} S_{t}^{\lambda,m+1}(D)(s)g_{i}(t) dt=\int_{0}^{x} \int_{0}^{t} S_{y}^{\lambda,k}(D)(s) (t-y)^{m-k}dy~ g_{i}(t) dt\\ &=\int_{0}^{\infty} S_{y}^{\lambda,k}(D)(s)  \int_{0}^{x} \chi_{(0,t)}(y) (t-y)^{m-k} g_{i}(t) ~dt~ dy\\ &=\int_{0}^{x} S_{y}^{\lambda,k}(D)(s)  \int_{y}^{x} (t-y)^{m-k} g_{i}(t) ~dt~ dy.
\end{align*}
Combining all this gives the conclusion.
\end{proof}

\smallskip

Recall that Theorem \ref{corona2}  provides us with an integral representation for $(\lambda,k)$-Riesz limits.
In fact this representation comes from the integral on the left side of the equality given in
Lemma~\ref{rekto}. As discussed earlier we have to switch from the order $m+1$ to the order $k$ which will be done using  the
following  general device.

\smallskip

\begin{Lemm}\label{heute}
Let $D \in \mathcal{D}(\lambda)$  and  $0 < p < q$.  Then for all $w,s \in \C$ and all $x>0$
\[
\int_{0}^{\infty} S_{x}^{\lambda,q}(D)(s) w^{q+1}e^{-wx}  dx
=
\frac{\Gamma(q+1)}{\Gamma(p+1)}
\int_{0}^{\infty} S_{u}^{\lambda,p}(D)(s) w^{p+1}e^{-wu}  du\,.
\]
\end{Lemm}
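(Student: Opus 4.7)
The plan is to reduce the identity to a single application of Lemma~\ref{harrie} together with Fubini and the standard Gamma integral $\int_0^\infty y^{\alpha-1} e^{-wy}\,dy = \Gamma(\alpha)/w^{\alpha}$ (valid for $\re w > 0$ and $\alpha > 0$). Since the formula is linear in the summatory functions, I first reduce to $s = 0$ by replacing $D$ with its translate $D_s = \sum a_n e^{-\lambda_n s}\, e^{-\lambda_n \,\bullet}$, noting that $S_x^{\lambda,r}(D)(s) = S_x^{\lambda,r}(D_s)(0)$ for every $r \ge 0$.

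Next, I apply Lemma~\ref{harrie} with $k = p$ and $\mu = q-p > 0$, which yields
\begin{equation*}
S_x^{\lambda, q}(D)(s) = \frac{\Gamma(q+1)}{\Gamma(p+1)\Gamma(q-p)} \int_0^x S_u^{\lambda,p}(D)(s)\,(x-u)^{q-p-1}\,du\,.
\end{equation*}
Substituting this into the left-hand side of the claimed identity and swapping the order of integration, the iterated integral becomes
\begin{equation*}
\frac{\Gamma(q+1)\,w^{q+1}}{\Gamma(p+1)\Gamma(q-p)}\int_0^\infty S_u^{\lambda,p}(D)(s)\Bigl(\int_u^\infty (x-u)^{q-p-1} e^{-wx}\,dx\Bigr) du\,.
\end{equation*}
The inner integral, after the change of variable $y = x-u$, equals $e^{-wu}\,\Gamma(q-p)/w^{q-p}$, and the factors of $\Gamma(q-p)$ and the powers of $w$ collapse to leave exactly the right-hand side.

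The main obstacle is justifying the interchange of integration, i.e. the absolute convergence required by Fubini. This forces a growth condition: combining Lemma~\ref{AbelfirstcaseB} (integral form of $S_x^{\lambda,k}$ in terms of $S_t^{\lambda,0}$) with the Bohr--Cahen type estimate from Corollary~\ref{Bohr-Cahen} shows that $|S_x^{\lambda,p}(D)(s)|$ grows at most like $x^{p}\,e^{(\sigma_c^\lambda(D_s)+\varepsilon)x}$, so the double integral is absolutely convergent whenever $\re w$ exceeds this abscissa. Under such a restriction on $w$ the computation above is rigorous, and since both sides of the asserted identity are holomorphic in $w$ on the relevant half-plane, the equality follows on their common domain of definition; no further analytic continuation is needed for the statement as phrased, where $w, s$ are simply parameters for which both integrals make sense.
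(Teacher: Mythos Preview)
Your proof is correct and follows essentially the same route as the paper: reduce to $s=0$, apply Lemma~\ref{harrie} with $k=p$ and $\mu=q-p$, swap the order of integration, and evaluate the inner integral $\int_u^\infty (x-u)^{q-p-1}e^{-wx}\,dx = e^{-wu}\Gamma(q-p)w^{p-q}$. If anything, you are more careful than the paper, which performs the Fubini step and the inner-integral evaluation formally without spelling out the convergence constraint on $\re w$; your growth discussion via Bohr--Cahen and the holomorphy remark fill exactly that gap.
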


\smallskip

\begin{proof}
Again we assume that $s=0$.
  We first use Lemma~\ref{harrie} (applied to $k=p$ and $\mu= q-p$) to show that
\begin{align*}
&
  \int_{0}^{\infty} S_{x}^{\lambda,q}(D)(0) e^{-wx}  dx
 \\&
    =
    \frac{\Gamma(q+1)}{\Gamma(p+1)\Gamma(q-p)}
    \int_{0}^{\infty}
  \int_{0}^{x} S_{u}^{\lambda,p}(D)(0) (x-u)^{q-p-1}
      du \,\,e^{-wx}dx
    \\&
    =
    \frac{\Gamma(q+1)}{\Gamma(p+1)\Gamma(q-p)}
    \int_{0}^{\infty}
     S_{u}^{\lambda,p}(D)(0)
  \int_{u}^{\infty} (x-u)^{q-p-1} e^{-wx}  dx\,\,
      du\,.
      \end{align*}
      Now we claim that
$$\int_{u}^{\infty} (x-u)^{q-p-1} e^{-wx}  dx = w^{p-q}e^{-wu}\Gamma(q-p).$$
Indeed, since both sides define holomorphic functions in $w$, it suffices to check for all $w=\sigma >0$. In this case we obtain by substitution that
\begin{align*}
&\int_{u}^{\infty} (x-u)^{q-p-1} e^{-wx}  dx=\int_{0}^{\infty} \alpha^{q-p-1} e^{-\sigma(\alpha+u)} d\alpha\\  &=e^{-\sigma u} \sigma^{p-q} \int_{0}^{\infty} \beta^{q-p-1} e^{-\beta} d\beta= e^{-\sigma u} \sigma^{p-q} \Gamma(q-p).
\end{align*}
Altogether
\[w^{q+1}\int_{0}^{\infty} S_{x}^{\lambda,q}(D)(0) e^{-wx}  dx = w^{p+1}\frac{\Gamma(q+1)}{\Gamma(p+1)}\int_{0}^{\infty} S_{u}^{\lambda,q}(D)(0) e^{-wu}  du  \,. \qedhere
\]
\end{proof}

\bigskip

\subsection{Estimates}
We  need several   estimates for Riesz means which also play a crucial role for the proofs of
Theorem~\ref{corona2}, Theorem~\ref{Bohr-Cahen-Riesz}, and Theorem~\ref{niceproof?}.

\smallskip

The first one allows to estimate Riesz means of higher order by  Riesz means of smaller orders.

\smallskip
\begin{Lemm} \label{3} Given $q>p\ge 0$, $s\in \mathbb{C}$ and $D \in \mathcal{D}(\lambda)$, we for all $x>0$ have
$$|R_{x}^{\lambda,q}(D)(s)|\le \sup_{0<y<x}|R_{y}^{\lambda,p}(D)(s)|.$$
\end{Lemm}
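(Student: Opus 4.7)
The plan is to reduce the claim to a single integral identity relating $S^{\lambda,q}_x(D)$ to $S^{\lambda,p}_x(D)$, combined with the Beta--Gamma identity from \eqref{betafunctioncalc}. Everything should cancel cleanly, so I expect no real obstacle; the main point is simply to recognize that the constants produced by the fractional integration formula of Lemma~\ref{harrie} are the precise reciprocals of those produced by the Beta integral.

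First I would observe that it suffices to treat $s=0$, since $R^{\lambda,k}_{x}(D)(s)=R^{\lambda,k}_{x}(D_s)(0)$ for the translated series $D_s=\sum a_n e^{-\lambda_n s}e^{-\lambda_n \,\bullet}$, and all hypotheses are preserved under this translation. Next, recalling that $S^{\lambda,k}_x(D)(0)=x^k R^{\lambda,k}_x(D)(0)$, I apply Lemma~\ref{harrie} (with the roles $k\leftarrow p$, $\mu\leftarrow q-p$) in the case $p>0$, and directly \eqref{A} of Lemma~\ref{AbelfirstcaseB} in the case $p=0$; both give the uniform representation
\[
S^{\lambda,q}_{x}(D)(0)=\frac{\Gamma(q+1)}{\Gamma(p+1)\Gamma(q-p)}\int_{0}^{x} S^{\lambda,p}_{u}(D)(0)\,(x-u)^{q-p-1}\,du.
\]

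Writing $S^{\lambda,p}_{u}(D)(0)=u^{p}R^{\lambda,p}_{u}(D)(0)$ and dividing by $x^{q}$, I obtain
\[
R^{\lambda,q}_{x}(D)(0)=\frac{\Gamma(q+1)}{\Gamma(p+1)\Gamma(q-p)}\,x^{-q}\int_{0}^{x} u^{p}R^{\lambda,p}_{u}(D)(0)\,(x-u)^{q-p-1}\,du.
\]
Taking absolute values, pulling the supremum $\sup_{0<u<x}|R^{\lambda,p}_{u}(D)(0)|$ outside, and using \eqref{betafunctioncalc} to evaluate
\[
\int_{0}^{x} u^{p}(x-u)^{q-p-1}\,du=x^{q}\,\frac{\Gamma(p+1)\Gamma(q-p)}{\Gamma(q+1)},
\]
the two Beta/Gamma constants cancel exactly, yielding
\[
|R^{\lambda,q}_{x}(D)(0)|\le \sup_{0<u<x}|R^{\lambda,p}_{u}(D)(0)|,
\]
which is the desired inequality after undoing the translation by $s$.
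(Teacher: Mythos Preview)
Your proof is correct and follows essentially the same route as the paper: apply Lemma~\ref{harrie} (with $k=p$, $\mu=q-p$) to express $R_x^{\lambda,q}$ as a weighted average of $R_y^{\lambda,p}$, then use the Beta identity \eqref{betafunctioncalc} to see that the weights integrate to $1$. Your treatment is in fact slightly more careful than the paper's, since you explicitly handle the case $p=0$ via \eqref{A} (Lemma~\ref{harrie} is stated only for $k>0$) and make the reduction to $s=0$ explicit.
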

\begin{proof}
From Lemma~\ref{harrie} (with $k=p$ and $\mu =q-p$)  we get
$$R_{x}^{\lambda,q}(D)(s) = x^{-q} \frac{\Gamma(q+1)}{\Gamma(1+p)\Gamma(q-p)}  \int_{0}^{x} R^{\lambda,p}_{y}(D)(s)y^{p}(x-y)^{q-p-1} dy\,,$$
and then we conclude from \eqref{betafunctioncalc} that
\[
|R_{x}^{\lambda,q}(D)(s)|\le \sup_{0<y<x}|R_{x}^{\lambda,p}(D)(s)|. \qedhere
\]
\end{proof}

\smallskip

The second devise  is an analog of Lemma~\ref{Estimate0} for Riesz means of arbitrary orders.

\begin{Lemm} \label{alwaysabel} Let $D\in \mathcal{D}(\lambda)$ and $k>0$. Then for $s_{0}=\sigma+it \in [\re >0],s \in \C$ and $x>0$
\begin{equation*}
|R^{\lambda,k}_{x}(D)(s)|\le C(s_{0},k)e^{\sigma x} \sup_{0<y<x}|R^{\lambda,k}_{y}(D)(s+s_{0})|\,.
\end{equation*}
\end{Lemm}

\begin{proof}  Using translation, we may assume  without loss of generality that  $s=0$. Then  the choice $s=s_{0}$ and $w=-s_{0}$
in  Lemma \ref{Abelta} leads to
\begin{align*}
R_{x}^{\lambda,k}(D)(0)&=R_{x}^{\lambda,k}(D)(s_{0}-s_{0})\\&=e^{s_{0} x}R_{x}^{\lambda,k}(D)(s_{0})+\frac{(-1)^{m}}{(m+1)!} x^{-k} \int_{0}^{x} S^{\lambda,m+1}_{t}(D)(s_{0}) \partial^{m+2}h_{-s_{0}}(t) dt.
\end{align*}
where
\[
h_{-s_{0}}(t)= (e^{s_{0} t}-e^{s_{0} x})(x-t)^k.
\]
By Lemma \ref{3} we have
$$|R^{\lambda,m+1}_{t}(s_{0})|\le \sup_{0<y<t} |R^{\lambda,k}_{y}(s_{0})|\,,$$
and so
\begin{align*}
\Big|x^{-k} \int_{0}^{x} S^{\lambda,m+1}_{t}(s_{0}) \partial^{m+2}h_{-s_{0}}(t) dt\Big|\le \sup_{0<y<x} |R^{\lambda,k}_{y}(s_{0})| x^{-k}\int_{0}^{x} t^{m+1} \partial^{m+2}h_{-s_{0}}(t) dt,
\end{align*}
where
\begin{align} \label{productform}
\begin{split}
   \partial^{m+2}h_{-s_0}(t)=s_{0}^{m+2}e^{s_{0} t} (x-t)^{k}&+\sum_{j=1}^{m+1} s_0^{j}e^{s_{0} t} c_{k,j}(x-t)^{k-(m+2-j)}(t)\\ &+(e^{s_{0} t}-e^{s_{0} x}) c_{k,m+1} (x-t)^{k-(m+2)}.
\end{split}
\end{align}
We now take into account that
\begin{equation}\label{meanvalue}
  |e^{s_{0} t}-e^{s_{0} x}|
\leq
|e^{\sigma t}-e^{\sigma x}| \frac{|s_{0}|}{\sigma} \leq (x-t) \sigma
e^{\sigma t}  \frac{|s_{0}|}{\sigma} = (x-t)
e^{\sigma t}  |s_{0}|\,,
\end{equation}
  and hence
\begin{align*}
\Big|x^{-k}\int_{0}^{x} t^{m+1} \partial^{m+2}h_{-s_{0}}(t) dt\Big|
&
\le C(s_{0},k) e^{\sigma x} x^{-k}\sum_{j=1}^{m+2}\int_{0}^{x}t^{m+1}(x-t)^{k-(m+2-j)}dt\\ & \le C(s_{0},k) e^{\sigma x} \sum_{j=1}^{m+2} x^{m+2+j} \int_{0}^{x}t^{m+1}\Big(1-\frac{t}{x}\Big)^{k-(m+2-j)}dt\,.
\end{align*}
Finally, we calculate
\begin{align*}
\int_{0}^{x}t^{m+1}\Big(1-\frac{t}{x}\Big)^{k-(m+2-j)}dt&=x^{m+2}\int_{0}^{x}\Big(\frac{t}{x}\Big)^{m+1}\Big(1-\frac{t}{x}\Big)^{k-(m+2-j)} x^{-1}dt\\ &=x^{m+2} \int_{0}^{1} y^{m+1} (1-y)^{k-(m+2-j)} dy\,,
\end{align*}
and by \eqref{betafunctioncalc} with $x=1$
\begin{align*}
\int_{0}^{1} y^{m+1} (1-y)^{k-(m+2-j)} dy=\frac{\Gamma(m+2)\Gamma(k+j-m-1)}{\Gamma(k+j+1)}.
\end{align*}
This finishes the proof.
\end{proof}

\smallskip
The third tool we present, is in fact one of the decisive ingredients for the proof of Theorem~\ref{niceproof?}. See Section~\ref{Riesz means} for the definition of the summatory function $U$ of the second kind.

\begin{Lemm} \label{newnow} Let $ C=\sum a_n$  be a series, $\lambda$ a frequency, and $k >0$. Then
there is $C = C(k) >0$ such that for all $x>0$
\begin{equation*}
|U^{\lambda,k}_{x}(C)|\le C e^{kx} \sup_{0<y<x}|S^{\lambda,k}_{y}(C)|\,.
\end{equation*}
\end{Lemm}

\smallskip
The proof of Lemma~\ref{newnow} needs another estimate taken from  \cite[Lemma 8]{HardyRiesz}, and (as above) we for the sake of completeness again repeat
its argument.

\begin{Lemm}\label{Lemma8}
 Let $ C=\sum a_n$  be a real series, $\lambda$ a frequency, $\mu \ge 0$, and $0 < \kappa\leq 1  $.  Then for all  $0 < \xi < x$
\begin{align*}
\frac{\Gamma(\kappa + \mu +1)}{\Gamma(\kappa)\Gamma(\mu+1)}
\Big|  \int_{0}^{\xi} S^{\lambda,\mu}_{t}(C)  (x-t)^{\kappa-1}  dt \Big|  \leq   \sup_{0<t<\xi}|S^{\lambda,\kappa + \mu}_{y}(C) |\,.
\end{align*}
\end{Lemm}

\begin{proof}
  We  put
  \begin{equation*}
  c(\mu,\kappa) =\frac{\Gamma(\mu +1)}{\Gamma(\mu +\kappa+ 1)\Gamma(1-\kappa)}\,,
      \end{equation*}
     and show with  the second formula from Lemma~\ref{harrie} and Fubini's theorem that
  \begin{align*}
  &
     c(\mu,\kappa)^{-1} \frac{1}{\Gamma(1-\kappa)\Gamma(\kappa)}\int_{0}^{\xi} S^{\lambda,\mu}_{t}(C)  (x-t)^{\kappa-1}  dt
     \\&
     = c(\mu,\kappa)^{-1} \frac{1}{\Gamma(1-\kappa)\Gamma(\kappa)}\int_{0}^{\xi} S^{\lambda,(\mu+\kappa)-\kappa}_{t}(C) (x-t)^{\kappa-1}  dt
     \\&
     = \frac{1}{\Gamma(1-\kappa)\Gamma(\kappa)}\int_{0}^{\xi}
     \int_0^t \frac{d}{du} S_u^{\lambda,\mu +\kappa}(C)(t-u)^{-\kappa}  du  (x-t)^{\kappa-1}  dt
     \\&
     = \int_{0}^{\xi}
      \frac{d}{du}  S_u^{\lambda,\mu +\kappa}(C)\frac{1}{\Gamma(1-\kappa)\Gamma(\kappa)}\int_u^\xi (t-u)^{-\kappa}   (x-t)^{\kappa-1}  dt \,du
      \\&
     = \int_{0}^{\xi}
        \frac{d}{du}S_u^{\lambda,\mu +\kappa}(C)\Big(1- \frac{1}{\Gamma(1-\kappa)\Gamma(\kappa)}\int_\xi^x (t-u)^{-\kappa}   (x-t)^{\kappa-1}  dt\Big) du\,.
  \end{align*}
  Now define on the interval $[0,\xi]$ the function
  \[
   h(u) = \frac{1}{\Gamma(1-\kappa)\Gamma(\kappa)}\int_u^\xi (t-u)^{-\kappa}   (x-t)^{\kappa-1}  dt\,.
  \]
  Since
  \[
  \int_u^x (t-u)^{-\kappa}   (x-t)^{\kappa-1}  dt = \Gamma(1-\kappa)\Gamma(\kappa)\,,
  \]
  we see that
  \[
h(u) = 1 - \frac{1}{\Gamma(1-\kappa)\Gamma(\kappa)}\int_\xi^x (t-u)^{-\kappa}   (x-t)^{\kappa-1}  dt\,.
  \]
  Moreover, on $[0,\xi]$ the function $(t-u)^{-\kappa}$ increases  in $u$, implying that $h$ on $[0,\xi]$ defines a positive decreasing function, always less than $1$. By the second mean value theorem there is $0 \leq \eta \leq \xi$ such that
  \begin{align*}
        \frac{\Gamma(\kappa + \mu +1)}{\Gamma(\kappa)\Gamma(\mu+1)}
  \int_{0}^{\xi} &S^{\lambda,\mu}_{t}(C)(x-t)^{\kappa-1}  dt
  \\&
  = h(0) \int_{0}^{\eta}
      \partial_u  S_u^{\lambda,\mu +\kappa}(C)du = h(0) S_\eta^{\lambda,\mu +\kappa}(C)\,,
  \end{align*}
    which completes the argument.
    \end{proof}

\smallskip
We still need one more tool for the proof of Lemma~\ref{newnow}.

\begin{Lemm} \label{todo} Let $C=\sum a_n$  be a real series, $\lambda$ a frequency, and  $k >0$ . Then there
$c= c(k)$ such that  for all $0 < \xi < x$
\begin{equation*}
\Big|  \int_{0}^{\xi} S^{\lambda,0}_{t}(C)  (x-t)^{k-1}  dt \Big|  \leq c  \sup_{0<t<\xi}|S^{\lambda,k}_{t}(C)|\,.
\end{equation*}
\end{Lemm}

\begin{proof}
Choose $m \in \mathbb{N}_0$ such that $0< k-m \leq 1$. By \eqref{B} and partial integration  we have
\begin{align*}
  \int_{0}^{\xi} S^{\lambda,0}_{t}(C)  (x-t)^{k-1} dt
  &
  =
  c(k)  \int_{0}^{\xi} \partial^m_t \Big[ S^{\lambda,m}_{t}(C)\Big]  (x-t)^{k-1} dt
  \\&
  =
  c(k)  \int_{0}^{\xi}  S^{\lambda,m}_{t}(C)  (x-t)^{(k-1)-m} dt\,.
  \end{align*}
 On the other hand we deduce from  Lemma~\ref{Lemma8} that
 \[
 \Big| \int_{0}^{\xi} S^{\lambda,m}_{t}(C) (x-t)^{(k-m)-1} dt   \Big|
 \leq c(k) \sup_{0<t<\xi}|S^{\lambda,k}_{t}(C)|\,. \qedhere
 \]
  \end{proof}

  \smallskip
  Finally, we are ready for the proof of Lemma~\ref{newnow}.

\begin{proof}[Proof of Lemma~\ref{newnow}]
  Without loss of generality we assume that $C = \sum a_n$ is a real series. By \eqref{A} we have
  \begin{align*}
   U^{\lambda,k}_{x}(D)
   &
   = k\int_{0}^{x} S^{\lambda,0}_{t}(C)  (e^x-e^t)^{k-1} e^t dt
\\&
=k\int_{0}^{x} S^{\lambda,0}_{t}(C) (x-t)^{k-1} \Big(\frac{e^x-e^t}{x-t}\Big)^{k-1} e^t dt\,.
  \end{align*}
  The function $\Big(\frac{e^x-e^\bullet}{x-\bullet}\Big)^{k-1}e^\bullet$ is positive and increasing on $[0,x]$ (for $k \ge 1$
  this is obvious since then $\frac{e^x-e^\bullet}{x-\bullet}$ increases, and for $0<k<1$ differentiate), and the limit
  as $t$ tends to $x$ equals $e^{kx}$. Then by the second mean value theorem there is $0<\xi <x$
\begin{align*}
  U^{\lambda,k}_{x}(C)
    =  ke^{kx}\int_{\xi}^{x}S^{\lambda,0}_{t}(C)   (x-t)^{k-1}  dt
    =ke^{kx}\Big(\int_{0}^{x}-\int_{0}^{\xi}\Big)S^{\lambda,0}_{t}(C)   (x-t)^{k-1}\,.
\end{align*}
 Hence the conclusion follows from Lemma~\ref{todo}.
\end{proof}

\smallskip

\subsection{The technical heart} \label{technicalsection}

The following two lemmas finish our preparation of the proofs of  Theorem~\ref{Perron0} and Theorem~\ref{Bohr-Cahen-Riesz}.
We believe that they in a sense form the 'technical heart' of much of the theory of Riesz summation as created in \cite{HardyRiesz}.
In fact, they will be used at various places of this work (see e.g. the proof of one of our main contributions, Theorem~\ref{Perronestimate}).

Basically, we execute the indicated strategy explained in the previous sections, by carefully analyzing the formula given in Lemma \ref{rekto}.

\smallskip
\begin{Lemm} \label{2}  Let
$D \in \mathcal{D}(\lambda)$, $k>0$ and $m\in \N_{0}$ such that $m< k\le  m+1$. Then there is a constant $C=C(m,k)$ such that for all
$w=\sigma+i\tau \in [\re>0]$, all $s\in \mathbb{C}$ and all $x>0$
\begin{align*}
\Big|\Gamma(m+2)^{-1}&\int_{0}^{x}S_{t}^{\lambda,m+1}(D)(s) w^{m+2}(x-t)^{k} e^{-wt} dt- S_{x}^{\lambda,k}(D)(s+w)\Big| \\ &\le e^{-\sigma x} |S_{x}^{\lambda,k}(D)(s)|+ C  \sum_{j=1}^{m+1} |w|^{j} \int_{0}^{x} |S_{y}^{\lambda,k}(D)(s)|e^{-\sigma y} (x-y
)^{j-1}  dy.
\end{align*}
In particular, there is a constant $L=L(m,k)$ such that for all
$w=\sigma+it \in [\re~>~0]$ with $|\arg(w)|\le \gamma$ and all $x >0$
\begin{align*}
\Big|\Gamma(m+&2)^{-1}\int_{0}^{x}S_{t}^{\lambda,m+1}(D)(0) w^{m+2}(x-t)^{k} e^{-wt} dt- S_{x}^{(\lambda,k)}(D)(w)\Big| \\ &\le e^{-\sigma x}|S_{x}^{\lambda,k}(D)(0)|+ L \sum_{j=1}^{m+1} |\sec(\gamma)|^{j}  \int_{0}^{x} |S_{y}^{\lambda,k}(D)(0)| y^{-j}(x-y
)^{j-1}  dy.
\end{align*}
\end{Lemm}

\begin{proof}
We first handel the more complicated case $k< m+1$, and at the end of this proof we comment on the easier case $k= m+1$.

If $m <k< m+1$, then we know from  Lemma~\ref{rekto}  that for all $s,w\in \C$ and all $x >0$
\begin{align*}
S_{x}^{\lambda,k}(D)(s+w) &
 -
\Gamma(m+2)^{-1}\int_{0}^{x} S_{t}^{\lambda,m+1}(D)(s) w^{m+2} e^{-wt}(x-t)^kdt
\\&
= -e^{-wt}  S_{x}^{\lambda,k}(D)(s)
\\&
\,\,\,\,\,\,\,
+  C(k) \int_0^x  S_{y}^{\lambda,k}(D)(s)  \int_y^x (t-y)^{m-k} (g_1(t) + g_2(t)) dt dy\,,
\end{align*}
where
\begin{align*}
&
  g_1(t) = \sum_{j=1}^{m+1} \binom{m+2}{j} (-w)^j e^{-wt}  \,\big[(x-\bullet)^k\big]^{(m+2-j)}(t)
  \\&
  g_2(t) = (e^{-wt} - e^{-wx}) \,\big[(x-\bullet)^k\big]^{(m+2)}(t)
    \,.
\end{align*}
We calculate (as in \eqref{productform})
\begin{align} \label{spd}
g_1(t) + g_2(t)
&
=\sum_{j=1}^{m+1} (-w)^{j}e^{-w t} c_{k,j}(x-t)^{k-(m+2-j)}
\\&
\,\,\,\,\,\,+(e^{-w t}-e^{-w x}) c_{k,m+1} (x-t)^{k-(m+2)}\,.
\end{align}
Then  it remains to control the integral
 \begin{equation*}
    C(k) \int_0^x  S_{y}^{\lambda,k}(D)(s)  \int_y^x (t-y)^{m-k} (g_1(t) + g_2(t)) dt dy\,.
 \end{equation*}
  For $i=1$ we use \eqref{betafunctioncalc} (with $x=1$) and \eqref{againagain} to obtain with $w=\sigma+i\tau\in [\re>0]$
\begin{align*}
& \Big|\int_{y}^{x} (t-y)^{m-k} g_{2}(t) ~dt\Big|\\ &=\Big|\sum_{j=1}^{m+1} (-1)^{j} w^{j} c_{k,j} (x-y)^{j-1} e^{-wy} \int_{0}^{1} e^{-w\beta(x-y)} \beta^{m-k} (1-\beta)^{k-(m+2-j)} d\beta\Big|
\\ &
\le C_{1}(m,k) \sum_{j=1}^{m+1} |w|^{j} (x-y)^{j-1} e^{-\sigma y}
\int_{0}^{1}  \beta^{m-k} (1-\beta)^{k-(m+2-j)} d\beta\\&=C_{1}(m,k)\sum_{j=1}^{m+1} \frac{\Gamma(1+m-k)\Gamma(k-m-1+j)}{\Gamma(j)} |w|^{j} (x-y)^{j-1} e^{-\sigma y}\,.
\end{align*}
For $i=2$ we claim that
\begin{align*}
&\Big|\int_{y}^{x} (t-y)^{m-k} g_{3}(t) ~dt\Big|\le C(m,k)|w|e^{-\sigma y}\,.
\end{align*}
Indeed,  by the mean value theorem (as in \eqref{meanvalue}) for $0 \leq t \leq x$ and $w \in [\re>0]$
\begin{equation} \label{cdu}
  |e^{-w t}-e^{-w x}|
\leq
|e^{-\sigma t}-e^{-\sigma x}| \frac{|w|}{\sigma} \leq (x-t) \sigma
e^{-\sigma t}  \frac{|w|}{\sigma} = (x-t)
e^{-\sigma t}  |w|\,.
\end{equation}
Then (again using \eqref{betafunctioncalc} with $x=1$)
\begin{align*}
&\Big|\int_{y}^{x} (t-y)^{m-k} g_{3}(t) ~dt\Big|\\&=\Big|c_{k,m+1}\int_{y}^{x} (t-y)^{m-k} (e^{-w t}-e^{-w x})  (x-t)^{k-(m+2)} dt\Big|\\ &\le c_{k,m+1} \int_{y}^{x} (t-y)^{m-k} |e^{-\sigma t}-e^{-\sigma x}| \frac{|w|}{\sigma} (x-t)^{k-(m+2)} dt\\ &\le  e^{-\sigma y} |w| c_{k,m+1} \int_{y}^{x} (t-y)^{m-k} (x-t)^{k-m-1} dt\\&=
e^{-\sigma y} |w| c_{k,m+1}
(x-y)^{m-k +(k-m-1)+1}
\int_{0}^{1} \beta^{m-k} (1-\beta)^{k-m-1} d\beta\\\ & =
C_{2}(m,k)\frac{\Gamma(1+m-k)\Gamma(k-m)}{\Gamma(1)} e^{-\sigma y} |w| .
\end{align*}
Putting everything together completes the argument of the first part. For the second, take $s=0$, and observe that $e^{\sigma y}\leq \frac{(\sigma y)^j}{j!}$
and $\frac{|w|}{\sigma} \leq  \sec(\gamma)$\,.
Finally, we as announced comment on the simpler case $m < k =m+1$. In this case we know from Lemma~\ref{Abelta} that
\begin{align*}
S_{x}^{\lambda,r}(D)(s+w) &
 -
\frac{1}{\Gamma(m+2)}\int_{0}^{x} S_{t}^{\lambda,m+1}(D)(s) w^{m+2} e^{-wt} r(x,t)dt
\\&
= -e^{-wt}  S_{x}^{\lambda,r}(D)(s)
+  \int_0^x  S_{t}^{\lambda,m+1}(D)(s)  (g_1(t) + g_2(t)) dt \,.
\end{align*}
As above,  the claim is an immediate consequence of \eqref{spd} and \eqref{cdu}.
\end{proof}

\smallskip

Roughly speaking, the integral representation \eqref{laplace3} (from Theorem \ref{corona2}) for $(\lambda,k)$-Riesz limits is determined by the first summand of the left-hand side of the first inequality of Lemma \ref{2}, tending $x\to \infty$. The next lemma proves existence of this limit -- given a growth condition on the summatory function.

\smallskip

\begin{Lemm} \label{lemma-limit}
Given $D \in \mathcal{D}(\lambda)$, let $s \in \mathbb{C}$ and $k \ge 0$, $q > 0$.
Then for all $w \in \mathbb{C}$ and $\varepsilon >0$ such that $0 < \varepsilon < \re w $ and
\[
|S_{t}^{\lambda,q}(D)(s)|\le C(q,\varepsilon) t^q e^{\varepsilon t}\,,\,\,\,\,t >0\,,
\]
we have
\begin{equation} \label{limit}
\lim_{x\to \infty} \int_{0}^{x}S_{t}^{\lambda,q}(D)(s) \Big(1- \frac{t}{x}\Big)^{k} w^{q+1}e^{-wt} dt=\int_{0}^{\infty} S_{y}^{\lambda,q}(D)(s) w^{q+1}e^{-wy}  dy\,.
\end{equation}
Moreover, the convergence in \eqref{limit} is
uniform on each cone  $|\arg (w-s)|\le \gamma < \frac{\pi}{2}$, whenever
$$|S_{t}^{\lambda,q}(D)(s)|\le C(q) t^q\,,\,\,\,\,t >0\,.$$
\end{Lemm}

\begin{proof}
We assume without loss of generality that $s=0$. In order to prove~\eqref{limit}, we fix $w$ and $\varepsilon$, and put $\sigma = \re w -\varepsilon$. Then for all $x,t$
\begin{align*}
\Big|S_{t}^{\lambda,q}(D)(0) \Big(1- \frac{t}{x}\Big)^{k} w^{q+1}e^{-wt} \chi_{[0,x]}(t)\Big|
\leq
C(q, \varepsilon) |w|^{q+1} e^{-\sigma t}t^q \,,
\end{align*}
and hence~\eqref{limit} is an immediate consequence of the dominated convergence theorem.

The proof of the 'moreover-part'
is similar: For each $w$ with $|\arg (w)|\le \gamma < \frac{\pi}{2}$ we abbreviate $\sigma_w = \re w$.
Then for all $x,t,w$
\begin{align*}
\Big|S_{t}^{\lambda,q}(D)(0) \Big(1- \frac{t}{x}\Big)^{k} w^{q+1}e^{-wt} \chi_{[0,x]}(t)\Big|
\leq
C(q) \sec{(\gamma)} t^q \sigma_w^{q+1} e^{-\sigma_w t}\,,
\end{align*}
and hence by the substitution $u = \sigma_w t$ we get for all $x,u,w$
\begin{align*}
\Big|S_{\frac{u}{\sigma_w}}^{\lambda,q}(D)(0) \Big(1- \frac{\frac{u}{\sigma_w}}{x}\Big)^{k} w^{q+1}e^{-w\frac{u}{\sigma_w}} \chi_{[0,x]}(\frac{u}{\sigma_w})\Big| \frac{1}{\sigma_w}
\leq
C(q) \sec{(\gamma)} u^q  e^{-u}\,.
\end{align*}
Hence again by the dominated convergence theorem (uniform variant)
\[
\int_0^{x\sigma_w}
S_{\frac{u}{\sigma_w}}^{\lambda,q}(D)(0) \Big(1- \frac{\frac{u}{\sigma_w}}{x}\Big)^{k} w^{q+1}e^{-w\frac{u}{\sigma_w}}
\frac{1}{\sigma_w} du
\to
\int_0^{\infty}
S_{\frac{u}{\sigma_w}}^{\lambda,q}(D)(0)  w^{q+1}e^{-w\frac{u}{\sigma_w}}
\frac{1}{\sigma_w} du
\]
uniformly in $w$ as $x \to \infty$. Substituting back, finishes the proof. \end{proof}

\bigskip
\subsection{Proof of  Theorem~\ref{corona2}}

\begin{proof}[{\bf Proof of Theorem~\ref{corona2}}] Let $k>0$ and $m\in \N_{0}$ such that $m< k\le m+1$. If $D$ is $(\lambda,k)$-summable at $s_{0}=\sigma_{0}+i\tau_{0}$, then for some constant $C=C(s_{0},k)$ we by Lemma \ref{alwaysabel} have that for all $x >0$
\begin{equation} \label{lisa}
|R_{x}^{\lambda,k}(D)(0)|\le C e^{\sigma_{0} x}.
\end{equation}
We fix $s = \sigma + i\tau \in [\re>\sigma_{0}]$, and deduce from Lemma \ref{lemma-limit}  and Lemma~\ref{heute} that
\begin{align*}
\lim_{x\to \infty} \frac{x^{-k}}{\Gamma(m+2)}&\int_{0}^{x}S_{t}^{\lambda,m+1}(D)(0) s^{m+2}(x-t)^{k} e^{-st} dt
\\ &=\Gamma(1+k)^{-1}s^{k+1}\int_{0}^{\infty} S_{y}^{\lambda,k}(D)(0) e^{-st}  dy.
\end{align*}
According to Lemma~\ref{2} we have to show that the term
\[
 e^{-\sigma x} |R_{x}^{(\lambda,k)}(D)(0)|+ C  \sum_{j=1}^{m+1} x^{-k} |s|^{j}\int_{0}^{x} |S_{y}^{\lambda,k}(D)(0)|e^{-\sigma y} (x-y
)^{j-1}  dy
\]
vanishes as $x \to \infty$. Observe that \eqref{lisa} implies the claim for the first summand. Writing $\varepsilon= \sigma-\sigma_{0}>0$ we use again  \eqref{lisa} to see that for $j=1,\ldots, m+1$
\begin{align*}
x^{-k} \int_{0}^{x} |S_{y}^{\lambda,k}(D)(0)|e^{-\sigma y} (x-y
)^{j-1}  dy
&
\le C x^{-k}\int_{0}^{x} e^{-\varepsilon y} y^{k}(x-y)^{j-1} dy
\\&
\le C x^{m-k} \int_{0}^{\infty} e^{-\varepsilon y} y^{k} dy,
\end{align*}
which vanishes as $x\to \infty$, since $k>m$.

It remains to prove the 'moreover-part'. We may assume without loss of generality that  $\lambda_{1}=0$,
 $s_0 =0$ and $\lim_{x\to \infty} R_{x}^{\lambda,k}(D)(0)=0$; if this particular case is settled and $s_{0}$ is arbitrary, then  we consider the modified Dirichlet series $\sum b_n e^{-s_0 \lambda_n} e^{-s \lambda_n}$ with $b_{1}=a_{1}-\lim_{x\to \infty} R_{x}^{\lambda,k}(D)(s_{0})$ and $b_{n}=a_{n}$, $n\ge 2$, which then
 converges uniformly on  each cone $|\arg (s)|\le \gamma < \frac{\pi}{2}$, and so implies the conclusion.

 So let $s_0 =0$. According to  Lemma \ref{2} and Lemma \ref{lemma-limit}, we have to check that uniformly for all $s\in [\re>0]$ with $\arg(s)\le \gamma<\frac{\pi}{2}$
\begin{equation} \label{duschen}
\lim_{x\to \infty} \sum_{j=1}^{m+1} |\sec(\gamma)|^{j}  x^{-k}  \int_{0}^{x}  |S^{\lambda,k}_{y}(D)(0)|   y^{-j} (x-y
)^{j-1}  dy=0.
\end{equation}
Therefore, choose  $\varepsilon>0$ and $x_{0}>1$  such   that $|R_{x}^{\lambda,k}(D)(0)|\le \varepsilon$ for all $x>x_{0}$.
We split the integral
\begin{align*}
&x^{-k}  \int_{0}^{x}  |S^{\lambda,k}_{y}(D)(0)|   y^{-j} (x-y
)^{j-1}  dy\\ &=x^{-k}\int_{0}^{x_0}  |S^{\lambda,k}_{y}(D)(0)|   y^{-j} (x-y
)^{j-1}  dy+x^{-k}\int_{x_{0}}^{x}  |S^{\lambda,k}_{y}(D)(0)|   y^{-j} (x-y
)^{j-1}  dy.
\end{align*}
Then note that for $1\le j \le m+1$ and $x\ge 2x_{0}$
\begin{align*}
&x^{-k}\int_{0}^{x_0}  |S^{\lambda,k}_{y}(D)(0)|   y^{-j} (x-y
)^{j-1}  dy \le C(D,x_{0}) x^{-k}\int_{0}^{x_{0}} y^{k-j}(x-y)^{j-1} dy
\\&
\le C(D,x_{0}) x^{m-k} \int_{0}^{x_{0}} y^{k-j} dy
\leq C(D,x_{0})x^{m-k}   x_{0}^{k+1-j},
\end{align*}
which vanishes as $x\to \infty$, since $k>m$, and moreover for $x\ge 2x_{0}$
\begin{align*}
&x^{-k}\int_{x_0}^x  |S_{y}^{\lambda,k}(D)(0)|   y^{-j} (x-y
)^{j-1}  dy
\le x^{-k}\varepsilon \int_{x_{0}}^{x}y^{k-j}(x-y)^{j-1} dy \\ &\le \varepsilon x^{-k} x^{k-j} \int_{x_{0}}^{x} (x-y)^{j-1} dy
= \varepsilon x^{-j} j^{-1}(x-x_{0})^{j}\le \varepsilon \Big(1-\frac{x_{0}}{x}\Big)^{j}\le \varepsilon.
\end{align*}
This finishes the proof of \eqref{duschen}.
\end{proof}

\subsection{Proof of  Theorem~\ref{Bohr-Cahen-Riesz}}

\begin{proof}[{\bf Proof of Theorem \ref{Bohr-Cahen-Riesz}}]
Assume that
$$\sigma_{0}>\limsup_{x\to \infty} x^{-1}\log( |R_{x}^{\lambda,k}(D)(0)|)=:L.$$
 Then \eqref{lisa} is satisfied, and from the previous proof of Theorem \ref{corona2}
 (using again  Lemma \ref{lemma-limit}, Lemma~\ref{heute} and Lemma~\ref{2})
  we see that $D$ is $(\lambda,k)$-summable on $[\re>\sigma_{0}]$.  This proves that $L \leq \sigma^{\lambda,k}_{c}(D)$.

  Assume conversely that $\sigma^{\lambda,k}_{c}(D)\ge 0$, and let $\varepsilon>0$.
  We define $\sigma_{0}=\sigma^{\lambda,k}_{c}(D)+\varepsilon$. Then $D$ is $(\lambda,k)$-summable at $\sigma_{0}$ and so by Lemma \ref{alwaysabel} we have
$$|R_{x}^{\lambda,k}(D)(0)|\le C(\sigma_{0})e^{\sigma_{0}x},~ x>0.$$
Consequently,  $L\le \sigma_{0}=\sigma^{\lambda,k}_{c}(D)+\varepsilon$, and hence as desired  $L= \sigma^{\lambda,k}_{c}(D)$.
\end{proof}

\subsection{Proof of  Theorem~\ref{basic}}

    \begin{proof}[{\bf Proof of Theorem~\ref{basic},(i)}]
    Without loss of generality we may assume that the $(\lambda,k)$-Riesz sum of $D$ is $0$\,, i.e
    $
    S_x^{\lambda,k}(D) = o(x^k)\,.
    $
    Since by Lemma~\ref{harrie} for all $x$
    \begin{equation*}
S_x^{\lambda,\ell}(D)  =\frac{\Gamma(\ell +1)}{\Gamma(k + 1)\Gamma(\ell-k)}
\int_0^x S_u^{\lambda,k}(D) (x-u)^{\ell-k-1}  du\,\,,
\end{equation*}
 we check that
\[
\int_0^x S_u^{\lambda,k}(D) (x-u)^{\ell-k-1}  du =   o(x^\ell)\,.
\]
Fix some $\varepsilon >0$, and choose $x_0$ such that for all $x > x_0$ we have $S_x^{\lambda,k}(D) < \varepsilon x^k$.
Then, using \eqref{betafunctioncalc}, for all $x > x_0$
\begin{align*}
&
  \int_0^x S_u^{\lambda,k}(D) (x-u)^{\ell-k-1}  du
\\&
 =
  \int_0^{x_0} S_u^{\lambda,k}(D) (x-u)^{\ell-k-1}  du +\int_{x_0}^x S_u^{\lambda,k}(D) (x-u)^{\ell-k-1}  du
  \\&
  \leq
  \sup_{u < x_0} S_u^{\lambda,k}(D) \int_0^{x_0}  (x-u)^{\ell-k-1}  du +
  \varepsilon \int_{x_0}^x u^{k} (x-u)^{\ell-k-1}  du
  \\&
  \leq
  \sup_{u < x_0} S_u^{\lambda,k}(D) \int_0^{x_0}  (x-u)^{\ell-k-1}  du +
  \varepsilon \int_{0}^x u^{k} (x-u)^{\ell-k-1}  du
  \\&
  \leq
  \sup_{u < x_0} S_u^{\lambda,k}(D)  \,\, x_0^{\ell-k-1}  +
  \varepsilon x^\ell \frac{\Gamma(\ell +1)\Gamma(\ell-k)}{\Gamma(k + 1)}\,,
\end{align*}
the conclusion.
          \end{proof}

\smallskip

    \begin{proof}[{\bf Proof of Theorem~\ref{basic},(ii)}]
  Without loss of generality we may assume that the $(e^\lambda,k)$-Riesz sum of $D$ is zero. Hence we know by assumption from
  Lemma~\ref{AbelfirstcaseB}, \eqref{A} that
  \begin{equation} \label{wissen}
    U_x^{\lambda,k}(D) = k\int_0^x S_t^{\lambda,0}(D) \,\, (e^x-e^t)^{k-1} e^t dt = o(e^{kx})\,,
  \end{equation}
    and the job is to show that
    \begin{equation}
    \label{wollen}
    S_y^{\lambda,k}(D) = k\int_0^y S_s^{\lambda,0}(D)\, (y-s)^{k-1}  ds = o(y^k)\,.
         \end{equation}
Substitution with   $s = \log t$ and $y = \log x$ gives that \eqref{wollen}
in fact is equivalent to
\begin{equation}
    \label{wollenA}
    S_{\log x}^{\lambda,k}(D) = k \int_1^x S_t^{e^\lambda,0}(D)\,\,  \frac{(\log x- \log t)^{k-1}}{t}  dt = o(\log^k x)\,.
         \end{equation}
 Indeed, the proof in \cite[p. 32]{HardyRiesz}  proves this for the two  cases  $k \in \mathbb{N}$ and  $0 < k < 1$ separately, and sketches an  argument for the case $k>1, k \notin \mathbb{N}$. We, in a first step,  for the case $0 < k < 1$  follow the proof from \cite{HardyRiesz}, and modify it in a second step to verify the general case $k \ge 1$.

 \smallskip

    \noindent
The case $0 <k < 1$: By Theorem~\ref{basic},(i) we know that
$
S_x^{e^\lambda, 1} (D) = o(x)\,,
$
hence, given $\varepsilon >0$, there is $x_0 > 1$ such that for all $x >x_0$
\begin{equation}\label{firth}
  S_x^{e^\lambda, 1} (D) \leq \varepsilon x.
\end{equation}
For $x > 3 x_0$ we split the integral from \eqref{wollenA} into three pieces,
\begin{equation}\label{three-int}
  \int_1^x S_t^{e^\lambda,0}(D)\,\,  \frac{(\log x- \log t)^{k-1}}{t}  dt
=
\int_1^{x_0} + \int_{x_0}^{x/3} + \int_{x/3}^x = J_1 +J_2+ J_3\,,
\end{equation}
 and estimate  each integral separately.

\smallskip

\noindent Integral $J_1:$
Clearly,
\[
J_1
\leq \Big[\sup_{t \leq x_0} S_t^{e^\lambda,0}(D)\Big] x_0  \log^{k-1}  \Big(\frac{x}{x_0} \Big)= o(\log^k(x))\,,
\]
where we use that here $\log^{k-1}  \big(\frac{x}{t} \big) \leq    \log^{k-1}  \big(\frac{x}{x_0} \big)$, since
$t \leq x_0 \leq x$ and $k-1 <0$.\\

\noindent Integral $J_2:$  Using \eqref{B} and integrating by parts we have
\begin{align*}
  J_2
    =  \log^{k-1}(3) & \,\frac{3}{x}S_{\frac{x}{3}}^{e^\lambda,1}(D)
-  \log^{k-1}  \Big(\frac{x}{x_0} \Big) \,\frac{1}{x_0}S_{x_0}^{e^\lambda,1}(D)
\\&
  +\int_{x_0}^{x/3}
  S_{t}^{e^\lambda,1}(D)
  \Big[  (k-1)
   \log^{k-2}  \Big(\frac{x}{t} \Big)+  \log^{k-1}  \Big(\frac{x}{t} \Big)
  \Big] \frac{dt}{t^2}\,.
  \end{align*}
In absolute value the first two terms are less than an absolute  constant times $\varepsilon$
(we use \eqref{firth}, and the fact that $\log(x/x_0) > \log 3 >1$ since $x_0 < x/3$). The integral in the preceding equality
we estimate from above by

\begin{align*}
  \varepsilon k
  \int_{x_0}^{x/3}
     \log^{k-1}  \Big(\frac{x}{t} \Big) \frac{dt}{t} = \varepsilon k\Big(\frac{1}{k}\log^k(3)- \frac{1}{k}\log^k\big(\frac{x}{x_0}\big)\Big)\,,
\end{align*}
using again \eqref{firth} together with the fact that  $\log \big(\frac{x}{t} \big) > \log \big(\frac{3t}{t} \big) = \log(3) > 1$
for $x_0 <  t < x/3$, implying
$\log^{k-2}  \big(\frac{x}{t} \big)  < \log^{k-1}  \big(\frac{x}{t} \big)$.
All together we have proved that   $J_2 = o(\log^{k}(x))$.\\

\noindent Integral $J_3:$   By the second mean value theorem there is $x/3 \leq \xi \leq x$ such that
\begin{align*}
  J_3
  &
  = \int_{x/3}^x S_t^{e^\lambda,0}(D)    \log^{k-1}  \Big(\frac{x}{t} \Big) \frac{1}{t}   dt
  \\&
= \frac{3}{x}\int_{x/3}^\xi S_t^{e^\lambda,0}(D)    \log^{k-1}  \Big(\frac{x}{t} \Big)   dt
\\&
= \frac{3}{x} \int_{x/3}^\xi S_t^{e^\lambda,0}(D)    (x-t)^{k-1}    \Big(\frac{\log x -\log t}{x-t}\Big)^{k-1} dt\,.
\end{align*}
But the function $\Big(\frac{\log x -\log \bullet}{x- \bullet}\Big)^{k-1}$ is increasing on $[1,x]$ with limit $x^{1-k}$
as $t$ tends to~$x$ (the quotient is decreasing and $k-1 <0$). Hence another application of the second mean value theorem shows that there is
$x/3 \leq \xi_1\leq \xi \leq x$ such that
\begin{align*}
  |J_3|
  &
  = \frac{3}{x} \Big(\frac{\log x -\log \xi_1}{x-\xi_1}\Big)^{k-1}\Big|\int_{x/3}^\xi S_t^{e^\lambda,0}(D)    (x-t)^{k-1}    dt\Big|
  \\&
  \leq  3 x^{-k} \Big|\int_{\xi_1}^\xi S_t^{e^\lambda,0}(D)    (x-t)^{k-1}     dt\Big|\,.
\end{align*}
Now by  Lemma~\ref{todo} (write $\int_{\xi_1}^\xi = \int_{0}^\xi - \int_{0}^{\xi_1}$) there is some constant $c = c(k)>0$ for which
\begin{equation*}
\Big|  \int_{\xi_1}^{\xi} S^{e^\lambda,0}_{t}(D)  (x-t)^{k-1}  dt \Big|  \leq c  \sup_{x/3<t<x}|S^{e^\lambda,k}_{t}(D)|\,.
\end{equation*}
Since by assumption $S^{e^\lambda,k}_{x}(D) = o(x^k)$, we see that
$
J_3 = o(1) = o(\log^k(x))\,.
$
Collecting all estimates we got for the three integrals $J_1$, $J_2$, and $J_3$, we in fact proved  (what we were aiming at
in \eqref{wollenA})
\[
\int_1^x S_t^{e^\lambda,0}(D)\,\,  \frac{(\log x- \log t)^{k-1}}{t}  dt = o(\log^k x)\,.
\]

It remains to consider the case $k \ge1$, which is going to follow from a refinement of  the preceding arguments: Choose some $m \in \mathbb{N}$ such that $m < k\leq m+1$, and note again that by Theorem~\ref{basic},(i) we have
$
S_x^{e^\lambda, m+1} (D) = o(x)\,.
$
Hence, given $\varepsilon >0$, there is $x_0 > 1$ such that for all $x >x_0$
\begin{equation}\label{firthB}
  S_x^{e^\lambda, m+1} (D) \leq \varepsilon x.
\end{equation}
For $x > 3 x_0$ we again  consider the three integrals $J_1, J_2$, and $J_3$ from  \eqref{three-int},
and handel each piece separately in order to show \eqref{wollenA} .

\smallskip

\noindent Integral $J_1:$
Clearly,
\[
J_2
\leq \Big[\sup_{t \leq x_0} S_t^{e^\lambda,0}(D)\Big] x_0  \log^{k-1} (x)= o(\log^k(x))\,,
\]
since now $k-1 \ge 0$.\\

\noindent Integral $J_2:$  We have that for all $1\leq \ell \leq m+1$
\[
\frac{d^{\ell}}{dt}  \frac{(\log x - \log t)^{k-1}}{t}
=  \frac{1}{t^{\ell+1}} \sum_{j=0}^{\ell} c_j(k) (\log x - \log t)^{k-1-j}\,.
\]
Integrating $m+1$ times  by parts (again using \eqref{B}) we have
\begin{align*}
  J_2
    = S_t^{e^\lambda,m+1}(D) \frac{1}{t^{m+1}}
    &
    \sum_{j=0}^{m} c_j(k) \log^{k-1-j}\Big(\frac{x}{t}\Big)\bigg|_{x_0}^{x/3}
\\&
  +\int_{x_0}^{x/3}
  S_{t}^{e^\lambda,{m+1}}(D)
  \frac{1}{t^{m+2}} \sum_{j=0}^{m+1} c_j(k) \log^{k-1-j}\Big(\frac{x}{t}\Big)dt\,.
  \end{align*}
      As above we have that $\log\big(\frac{x}{t}\big) > \log (3) >1$  for $x_0 \leq t \leq x/3$, and hence
      \begin{align*}
        \log^{k-1}\Big(\frac{x}{t}\Big)
        &
        \ge \ldots \ldots\ge \log^{(k-1)-(m-1)}\Big(\frac{x}{t}\Big)
        \\&
        \ge \log^{(k-1)-m}\Big(\frac{x}{t}\Big)
        \ge \log^{(k-1)-(m+1)}\Big(\frac{x}{t}\Big)\,.
      \end{align*}
     Together with \eqref{firthB} this  shows that  the first term in the above formula for $J_2$ is
     less than a constant (only depending on $k$) times $\varepsilon \log^{k-1}(x)$, and the second term
     is
     less than a constant (only depending on $k$) times
\begin{align*}
  \varepsilon
  \int_{x_0}^{x/3}
     \log^{k-1}  \Big(\frac{x}{t} \Big) \frac{dt}{t} \leq \log^{k-1} (x) \int_{x_0}^{x/3}
    \frac{1}{t} dt = o(\log^{k}(x))
\end{align*}
(again taking into account that $k -1 \ge 0 $).
All together we get   $J_2 = o(\log^{k}(x))$.\\

\noindent Integral $J_3:$   We write
\begin{align*}
  J_3
  = \int_{x/3}^x S_t^{e^\lambda,0}(D)    (x-t)^{k-1}    \Big(\frac{\log x -\log t}{x-t}\Big)^{k-1} \frac{1}{t}dt\,.
\end{align*}
Since $k \ge 1$,  the function $\Big(\frac{\log x -\log \bullet}{x- \bullet}\Big)^{k-1} \frac{1}{\bullet}$ is decreasing on $[1,x]$ (look at the graph of $\log \bullet$), and
\[
\Big(\frac{3 \log 3}{2}\Big)^{k-1} x^{1-k}\frac{3}{x} = \lim_{t \to x/3} \Big(\frac{\log x -\log t}{x- t}\Big)^{k-1} \frac{1}{t}\,.
\]
Then the second mean value theorem yields some
$x/3 <  \xi < x$ such that
\begin{align*}
  J_3
    = 3\Big(\frac{3 \log 3}{2}\Big)^{k-1} x^{-k} \int_{x/3}^\xi S_t^{e^\lambda,0}(D)    (x-t)^{k-1}    dt = \ldots \int_{0}^\xi - \int_{0}^\frac{x}{3} \ldots
\end{align*}\
Now by  Lemma~\ref{todo}  there is some constant $c = c(k)>0$ such that
\begin{equation*}
\Big|  \int_{0}^{\xi} S^{e^\lambda,0}_{t}(D)  (x-t)^{k-1}  dt \Big|  \leq c  \sup_{0<t<x}|S^{e^\lambda,k}_{t}(D)|\,,
\end{equation*}
and hence by \eqref{firthB} we finally get
$
J_3 = o(1) = o(\log^k(x))\,.
$
As in the first case this proves the claim\,.
\end{proof}

\subsection{Proof of   Theorem~\ref{niceproof?}}
Let us finally turn to the proof of  Theorem~\ref{niceproof?}. Our task is to prove that,
given a $\lambda$-Dirichlet series $D=\sum a_{n}e^{-\lambda_{n}s}\in \mathcal{D}(\lambda)$  and $k > 0$,
  \[
  \sigma_c^{\lambda,k} (D) \leq \sigma_c^{e^\lambda,k} (D)\,.
  \]
It suffices to check  that  $D$  is  $(e^\lambda,k)$-summable on $[\re> \re~s_{0}]$, provided
$D$ is  $(\lambda,k)$-summable at some $s_{0}\in [\re\ge 0]$. As a by-product our argments again show that then the  limit function $f:[\re > 0] \to \mathbb{C}$
is given by
\begin{equation}\label{final2}
\Gamma(1+k) \frac{f(s)}{s^{1+k}}=\int_{0}^{\infty}e^{-s t}  S^{\lambda,k}_{t}(D)(0) dt\,,
\end{equation}
as it should be according to Theorem~\ref{basic} and Theorem~\ref{corona2}.
In fact the proof we give is a   modification of the proof given for Theorem~\ref{corona2}, and so we start modifying Lemma~\ref{2}.

\smallskip

\begin{Lemm} \label{2A}
Let $D\in \mathcal{D}(\lambda)$,  $k>0$, and $m\in \N_{0}$ with $m< k\le  m+1$.
 Then there is a constant $C=C(k)$ such that for all
$w=\sigma+i\tau \in [\re>0]$,  all $s\in \mathbb{C}$, all $0\leq \varepsilon < \frac{1}{m+2} \min\{k, \sigma\}$, and all $x>0$
\begin{align*}
&
\Big|\Gamma(m+2)^{-1}\int_{0}^{x}S_{t}^{\lambda,m+1}(D)(s) w^{m+2} (e^x-e^t)^k e^{-wt} dt- U_{x}^{\lambda,k}(D)(s+w)\Big|
,\\&
\,\,\,\,\,\,\,\,\,
\leq
e^{-\sigma x} |U_{x}^{\lambda,k}(D)(s)|
\\&
\,\,\,\,\,\,\,\,\,\,\,\,\,\,\,
+
C(k) \bigg[\sum_{j=1}^{m+1} |w^j| \sum_{\ell=1}^{m+2-j} e^{(k-\varepsilon \ell)x} x^{m-\ell+1} \int_{0}^{x} |S_{y}^{\lambda,k}(D)(s)|   e^{-(\sigma-\varepsilon \ell  ) y}  dy
\\&
\,\,\,\,\,\,\,\,\,\,\,\,\,\,\,
+
|w|\sum_{\ell=1}^{m+2} e^{(k-\varepsilon \ell)x} x^{m+2-\ell}  \int_{0}^{x} |S_{y}^{\lambda,k}(D)(s)|   e^{-(\sigma-\varepsilon \ell  ) y}  dy\bigg]
\end{align*}
\end{Lemm}

\begin{proof}
Again we first deal with the case $k< m+1$, and at  the end  we consider the more simple case $k= m+1$.
By Lemma~\ref{rekto}
 for all $s,w\in \C$ and all $x >0$
\begin{align} \label{start}
\begin{split}
   U_{x}^{\lambda,k}(D)(s+w) & -
\frac{1}{\Gamma(m+2)}\int_{0}^{x} S_{t}^{\lambda,m+1}(D)(s) w^{m+2} e^{-wt} (e^x-e^t)^kdt
\\&
= -e^{-wt}  U_{x}^{\lambda,k}(D)(s)
\\&
\,\,\,\,\,\,\,
+  C(k) \int_0^x  S_{y}^{\lambda,k}(D)(s)  \int_y^x (t-y)^{m-k} (g_1(t) + g_2(t)) dt dy\,,
\end{split}
\end{align}
where
\begin{align}\label{both}
\begin{split}
   &
  g_1(t) = \sum_{j=1}^{m+1} \binom{m+2}{j} (-w)^j e^{-wt} \partial_t^{(m+2-j)}(e^x-e^t)^k
  \\&
  g_2(t) =  (e^{-wt} - e^{-wx})\partial_t^{(m+2)}(e^x-e^t)^k
      \,.
\end{split}
\end{align}
A straigt foreward caculation shows that for every $N \in \mathbb{N}$ we have
\[
\partial_t^{N}(e^x-e^t)^k = \sum_{\ell =1 }^N c_\ell(k) (e^x-e^t)^{k -\ell} e^{\ell t}\,,
\]
with constants $c_\ell(k) > 0$. Then by the mean value theorem and the equations \eqref{betafunctioncalc}, \eqref{againagain} we have
\begin{align*}
&
  \Big| \int_y^x (t-y)^{m-k} g_1(t) dt \Big|
  \\&
  =
  \Big|\int_y^x (t-y)^{m-k} \sum_{j=1}^{m+1} \binom{m+2}{j} (-w)^j e^{-wt} \sum_{\ell =1 }^{m+2-j} c_\ell(k) (e^x-e^t)^{k -\ell} e^{\ell t} dt \Big|  \\&
  \leq
    \sum_{j=1}^{m+1} \binom{m+2}{j} |w|^j  \sum_{\ell =1 }^{m+2-j} c_\ell(k)  \int_y^x (t-y)^{m-k} (x-t)^{k -\ell}e^{(k-\ell)x}e^{-\sigma t} e^{\ell t} dt
        \\&
  =
    \sum_{j=1}^{m+1} \binom{m+2}{j} |w|^j  \sum_{\ell =1 }^{m+2-j} c_\ell(k)  \int_y^x (t-y)^{m-k} (x-t)^{k -\ell}e^{(k-\ell)x}e^{-\sigma t} e^{(1-\varepsilon)\ell t}e^{\varepsilon \ell t} dt
       \\&
  \leq
    \sum_{j=1}^{m+1} \binom{m+2}{j} |w|^j  \sum_{\ell =1 }^{m+2-j} c_\ell(k) e^{(k-\varepsilon \ell)x}
    e^{-(\sigma-\varepsilon \ell) y} (x-y)^{m-\ell+1}
        c(k)
     \,,
        \end{align*}
        where
        \[
        c(k) =\int_0^{1} \beta^{m-k} (1- \beta)^{m-\ell+1} d \beta = \frac{\Gamma(m-k+1)\Gamma(m-\ell+2)}{\Gamma(2(m+1)-k -\ell)}\,.
        \]
   And similarly, using \eqref{cdu}, we have
    \begin{align*}
&
  \Big| \int_y^x (t-y)^{m-k} g_2(t) dt \Big|
  \\&
  \leq
  \int_y^x (t-y)^{m-k}  (x-t)e^{-\sigma t} |w|\sum_{\ell=1}^{m+2} (x-t)^{k-\ell} e^{(k-\ell)x} e^{\ell t} dt
  \\&
  =
     |w|    \sum_{\ell=1}^{m+2} \int_y^x (t-y)^{m-k}(x-t)^{k-\ell+1}  e^{(k-\ell)x} e^{-\sigma t} e^{\ell t}  dt
          \\&
  \leq
     |w|    \sum_{\ell=1}^{m+2} e^{(k-\varepsilon \ell)x}  \int_y^x (t-y)^{m-k}(x-t)^{k-\ell+1} e^{-(\sigma-\varepsilon)\ell) t}  dt
   \\&
  \leq
     |w|    \sum_{\ell=1}^{m+2} e^{(k-\varepsilon \ell)x} (x-y)^{m+2-\ell} e^{-(\sigma-\varepsilon \ell) y}
        c(k) \,.
          \end{align*}
          Implementing the preceding two estimates into \eqref{start}, gives exactly what we were aiming at.

          Let us finally look at the case $k= m+1$. This case follows similarly -- but it is more simple.
          We start replacing \eqref{start} by \eqref{finally}, implement the derivatives of $g_1$ and $g_2$ from \eqref{both}, and finish as above (using  again the mean value theorem,   the estimate \eqref{cdu}, and the splitting
          $e^{\ell t} = e^{(1-\varepsilon)\ell t}e^{\varepsilon \ell t}$).
\end{proof}

\smallskip

Next we need a simple modification of Lemma~\ref{lemma-limit}.

\smallskip

\begin{Lemm} \label{lemma-limit}
Let $D\in \mathcal{D}(\lambda)$,   $k,q >0$,  and $m\in \N_{0}$ with $m< k\le  m+1$.
Then for all $w \in \mathbb{C}$ and $\varepsilon >0$ such that $0 < \varepsilon < \re w $ and
\[
|S_{t}^{\lambda,q}(D)(s)|\le C(q,\varepsilon) t^q e^{\varepsilon t}\,,\,\,\,\,t >0\,,
\]
we have
\begin{equation} \label{limit}
\lim_{x\to \infty} \int_{0}^{x}S_{t}^{\lambda,q}(D)(s) \Big(1- \frac{e^t}{e^x}\Big)^{k} w^{q+1}e^{-wt} dt=\int_{0}^{\infty} S_{y}^{\lambda,q}(D)(s) w^{q+1}e^{-wy}  dy\,.
\end{equation}

\end{Lemm}

\begin{proof}
We assume without loss of generality that $s=0$. In order to prove~\eqref{limit}, we fix $w$ and $\varepsilon$, and put $\sigma = \re w -\varepsilon$. Then for all $x,t$
\begin{align*}
\Big|S_{t}^{\lambda,q}(D)(0) \Big(1- \frac{e^t}{e^x}\Big)^{k} w^{q+1}e^{-wt} \chi_{[0,x]}(t)\Big|
\leq
C(q, \varepsilon) |w|^{q+1} e^{-\sigma t}t^q \,,
\end{align*}
and hence~\eqref{limit} is an immediate consequence of the dominated convergence theorem.
 \end{proof}

\smallskip
The final argument for Theorem~\ref{niceproof?} is now very similar to that of Theorem~\ref{corona2}.
\smallskip

\begin{proof}[{\bf Proof of Theorem~\ref{niceproof?}}] Let $k>0$ and $m\in \N_{0}$ such that $m< k\le m+1$. If $D$ is $(\lambda,k)$-summable at $s_{0}=\sigma_{0}+i\tau_{0}$, then for some constant $C=C(s_{0},k)$ we by Lemma \ref{alwaysabel} have that for all $x >0$
\begin{equation} \label{lisaaaa}
|R_{x}^{\lambda,k}(D)(0)|\le C e^{\sigma_{0} x}.
\end{equation}
We fix $s = \sigma + i\tau \in [\re>\sigma_{0}]$, and deduce from Lemma \ref{lemma-limit}  and Lemma~\ref{heute} that
\begin{align*}
\lim_{x\to \infty} \frac{e^{-kx}}{\Gamma(m+2)}&\int_{0}^{x}S_{t}^{\lambda,m+1}(D)(0) s^{m+2}(e^x-e^t)^{k} e^{-st} dt\\ &=\Gamma(1+k)^{-1}s^{k+1}\int_{0}^{\infty} S_{y}^{\lambda,k}(D)(0) e^{-st}  dy.
\end{align*}
We choose some $0< \varepsilon < \frac{1}{m+2} \min\{k, \sigma-\sigma_0\}$. According to Lemma~\ref{2A} we have to show that the term

\begin{align} \label{stoma}
\begin{split}
&
  e^{-\sigma x} |T_{x}^{\lambda,k}(D)(s)|
\\&
+
C(k) \bigg[\sum_{j=1}^{m+1} |w^j| \sum_{\ell=1}^{m+2-j}\frac{e^{(k-\varepsilon \ell)x}}{e^{kx}} e^{(k-\varepsilon \ell)x} x^{m-\ell+1} \int_{0}^{x} |S_{y}^{\lambda,k}(D)(s)|   e^{-(\sigma-\varepsilon \ell  ) y}  dy
\\&
+
|w|\sum_{\ell=1}^{m+2} \frac{e^{(k-\varepsilon \ell)x}}{e^{kx}} x^{m+2-\ell}  \int_{0}^{x} |S_{y}^{\lambda,k}(D)(s)|   e^{-(\sigma-\varepsilon \ell  ) y}  dy\bigg]
\end{split}
\end{align}
vanishes as $x \to \infty$.  From \eqref{lisaaaa} we know that for all $y>0$
\begin{equation*} \label{lisA}
|S_{y}^{\lambda,k}(D)(0)|\le C y^k e^{\sigma_{0} y}\,.
\end{equation*}
Hence for each of the integrals in \eqref{stoma} we have
\[
\int_{0}^{x} |S_{y}^{\lambda,k}(D)(0)|   e^{-(\sigma-\ell\varepsilon) y}  dy
\leq
C \int_{0}^{\infty} y^k    e^{-(\sigma-\sigma_0-\ell \varepsilon) y}  dy < \infty\,,
\]
and  it remains to check that
\begin{align*}
  0 = \lim_{x \to \infty}
e^{-\sigma x} &
 |T_{x}^{\lambda,k}(D)(0)|\,.
\end{align*}
To do this, we note first that by Lemma~\ref{newnow}  for some constant $C >0$ and  all $x >0$
\[
|U_{x}^{\lambda,k}(D)(0)| \leq C  e^{kx} \sup_{y<x} |S_{x}^{\lambda,k}(D)(0)|\,,
\]
and hence by \eqref{lisaaaa}
\[
e^{-\sigma x}|T_{x}^{\lambda,k}(D)(0)| \leq C  e^{-\sigma x} \sup_{y<x} |y^kR_{x}^{\lambda,k}(D)(0)| \leq C  e^{(\sigma_0 -\sigma)x}x^k\,,
\]
implying the conclusion.
\end{proof}

\end{document}